\DeclareFontFamily{OT1}{pzc}{}
\DeclareFontShape{OT1}{pzc}{m}{it}{<-> [1.15] rpzcmi}{}
\DeclareMathAlphabet{\mathzc}{OT1}{pzc}{m}{it}
\def\End{\operatorname{End}\kern-.5pt}
\def\KK{{\mathzc K\kern0pt}}
\def\vv{{\mathzc v\kern.5pt}}
\def\aq{/\kern-2pt/}
\def\Hom{\operatorname{Hom}}
\def\diag{\operatorname{diag}}
\def\La{{\Lambda}}
\def\fS{{\mathfrak S}}
\def\End{{\text{\rm End}}}
\def\Hom{{\text{\rm Hom}}}
\def\row{{\text{\rm row}}}
\def\col{{\text{\rm col}}}
\def\bsi{{\boldsymbol i}}
\def\bsj{{\boldsymbol j}}
\def\bsq{{\boldsymbol q}}
\def\bse{{\boldsymbol e}}
\def\up{{\boldsymbol{\upsilon}}}
\def\sH{{\mathcal H}}
\def\sS{{\mathcal S}}
\def\sZ{{\mathcal Z}}
\def\sD{{\mathcal D}}
\def\sA{{\mathcal A}}
\def\sE{{\mathcal E}}
\def\la{{\lambda}}
\def\al{{\alpha}}
\def\whd{{\widehat{d}}}
\def\sL{{\mathcal L}}
\def\vep{{\varepsilon}}
\def\xib{{\xi}}
\def\ugk{{{K}}}
\def\sfK{{\mathsf K}}
\def\sfE{{\mathsf E}}
\def\sfM{{\mathsf M}}
\def\fA{{\boldsymbol{\mathfrak A}}}
\def\bfU{{\mathbf U}}
\def\bsS{{\boldsymbol{\sS}}}
\newtheorem{theorem}{Theorem}[section]
\newtheorem{lemma}[theorem]{Lemma}
\newtheorem{proposition}[theorem]{Proposition}
\newtheorem{corollary}[theorem]{Corollary}
\theoremstyle{definition}
\newtheorem{definition}[theorem]{Definition}
\newtheorem{algorithm}[theorem]{Algorithm}
\newtheorem{remark}[theorem]{Remark}
\numberwithin{equation}{theorem}
\def\co{{\rm co}}
\def\ro{{\rm ro}}
\def\Hq{{\sH_\bsq(r)}}
\def\Sq{{\sS_\bsq(m|n,r)}}
\def\vep{{\varepsilon}}
\def\scrT{{\mathscr T}}
\def\hat{\widehat}
\def\fw{{\mathfrak v}}
\begin{document}

\baselineskip16pt
\def\hei{\relax}

 \title[A Realisation of the quantum superalgebra $\bfU(\mathfrak{gl}_{m|n})$ ]{A Realisation of the quantum supergroup $\bfU(\mathfrak{gl}_{m|n})$ }
\author{Jie Du and Haixia Gu}
\address{J.D., School of Mathematics and Statistics,
University of New South Wales, Sydney NSW 2052, Australia}
\email{j.du@unsw.edu.au}
\address{H.G., Department of Mathematics,
East China Normal University (Minhang Campus), Shanghai 200241, China}
\email{alla0824@126.com}
\date{\today}
\thanks{The first author gratefully acknowledge support from ARC under grant DP120101436. The work was completed while the second author was visiting UNSW.
She would like to thank the China Scholarship Council and ARC for
financial support and UNSW for its hospitality during her visit.}

\subjclass[2010]{Primary: 17B37, 17A70, 20G43; Secondary: 20G42, 20C08}

\begin{abstract}We reconstruct the quantum enveloping superalgebra $\bfU(\mathfrak{gl}_{m|n})$ over $\mathbb Q(\up)$ via (finite dimensional) quantum Schur superalgebras.
In particular, we obtain a new basis containing the standard generators for $\bfU(\mathfrak{gl}_{m|n})$ and explicit multiplication formulas between the generators and an arbitrary basis element.
\end{abstract}

 \maketitle

When an algebra is defined by generators and relations, the realisation problem of the algebra is to reconstruct the algebra via a certain vector space---the realisation space---together with some explicit multiplication rules on a basis of the space.
For example, for Kac--Moody algebras, the affine case is constructed in \cite{Kac2}
via loop algebras and central extensions while, for symmetrizable Kac--Moody algebras, the problem was solved by Peng and Xiao \cite{PX} via root categories and Hall multiplication.
If the realisation space happens to be the Grothendieck group of a certain representation category, such a realisation is also known as categorification.

Since the introduction of quantum groups in late eighties, their realisation and applications have achieved significant progress. For example,
 Ringel \cite{R90} gave a realisation for the $\pm$-parts of quantum enveloping algebras of finite type. This work has also been generalised by Lusztig \cite{Lu91,Lu93}, Green \cite{Gr95}
 to the symmetrizable case. The realisation problem for the entire quantum enveloping algebras has also attracted much attention. For example, Beilinson, Lusztig and MacPherson
solved the problem for quantum $\mathfrak{gl}_n$ via endomorphism algebras of certain representations, namely, the quantum Schur algebras. This approach has also been investigated in the affine case by Deng, Fu and first author; see \cite{DF09} and \cite{DDF}. Recently,
Bridgeland \cite{Br} has solved the problem for all quantum enveloping algebras of finite type, generalising the construction in \cite{PX}. In this paper, we will tackle the problem for the quantum groups associated with the Lie superalgebra $\mathfrak{gl}_{m|n}$.

The work was motivated by several recent developments about the quantum Schur or $\up$-Schur (if the parameter needs to be specified) superalgebras $\bsS(m|n,r)$. Firstly, Rui and the first author \cite{DR} introduced these algebras and established several properties in analogy to those of quantum Schur algebras. For example, the construction of standard bases via super double cosets gives the base change property, the Kazhdan--Lusztig combinatorics via super cells and super Robinson--Schensted-Knuth correspondence gives a construction of simple modules in terms cells modules. Moreover, these algebras have also alternative characterisations as a homomorphic image of the quantum supergroup $\bfU(\mathfrak{gl}_{m|n})$ and as the dual algebras
of the homogeneous components of the super matrix coordinate bialgebra.
Secondly,  a presentation for these algebras have been given by Turkey and Kujawa \cite{TK} via the epimorphism from $\bfU(\mathfrak{gl}_{m|n})$ to $\bsS(m|n,r)$. Thirdly, by applying the relative norm method of Hoefsmit and Scott developed in 1977 (see \cite{J}) and its application to quantum Schur algebras \cite{Du91-2}, the authors together with Wang \cite{DGW} determined a classification of their irreducible representations at a root of unity when $m+n\geq r$.

All these works suggest that the realisation of $\bfU(\mathfrak{gl}_{m|n})$ via quantum Schur superalgebras should exist. However, since the BLM realisation for quantum $\mathfrak{gl}_{n}$ relies on a geometric setting for quantum Schur algebras, it is inevitable that one has to develop a new and algebraic approach in the super case. Motivated from a large amount of computation with relative norms in \cite{DGW},
 we discover in this paper that the relative norm method of Hoefsmit and Scott can replace the geometric method to establish two key multiplication formulas and, thus, to solve the realisation problem for the quantum supergroup $\bfU(\mathfrak{gl}_{m|n})$.

 We organise the paper as follows. The basics of symmetric groups, Hecke algebras and quantum Schur superalgebras are briefly discussed in \S1. In particular, we use the Hecke algebra action on the tensor superspace and display the relative norm basis for the $\bsq$-Schur superalgebra. In \S 2, we develop some algorithms for computing a reduced expression of a distinguished double coset representative associated with a matrix and computing the corresponding Hecke algebra action on various tensors.
With these algorithms, we are able to derive in \S3 the super version of two key multiplication formulas which in the non super or affine case were obtained by analysis on (partial) flag varieties.

Now, like \cite{BLM}, we are in position to get more multiplication formulas first with respect to the parameter $\bsq=\up^2$ in \S 4 and then their normalised version with respect to parameter $\up$ in \S5. Uniform spanning sets for $\up$-Schur superalgebras $\bsS(m|n,r)$ for all $r\geq0$ are constructed in \S6. The uniformness allows to define a $\mathbb Q(\up)$-subspace $\fA(m|n)$ inside the direct product $\bsS(m|n)$ of $\bsS(m|n,r)$ over all $r\geq0$. We then prove that, by deriving explicit multiplication formulas between (candidates of) generators and basis elements of $\fA(m|n)$, $\fA(m|n)$ is closed under multiplication by these generators. By producing the defining relations for $\bfU(\mathfrak{gl}_{m|n})$ in $\bsS(m|n)$, we find in \S7 an algebra homomorphism $\eta$ from $\bfU(\mathfrak{gl}_{m|n})$ to $\bsS(m|n)$. Finally, as a further application of the multiplication formulas, we establish a super triangular relation in \S8. This relation gives us a monomial basis which is crucial to prove that $\fA(m|n)$ is the image of $\eta$ and $\eta$ is injective.

Like the non super case, this realisation will have many applications such as a super version of the quantum Schur--Weyl duality at a root of unity and realisation of the finite dimensional quantum supergroups. We plan to complete these tasks in forthcoming papers.

Throughout the paper,
let $\up$ be an indeterminate and let $\bsq=\up^2$. Let $\sZ=\mathbb Z[\up,\up^{-1}]$ and let $\sA=\mathbb Z[\bsq,\bsq^{-1}]$.
For any integers $0\leq t\leq s$, define Gaussian polynomials in $\sA$ by
$$\left[\!\!\left[t\atop s\right]\!\!\right]=\frac{[\![s]\!]^!}{[\![t]\!]^![\![s-t]\!]^!}$$
where $[\![r]\!]^{!}:=[\![1]\!][\![2]\!]\cdots[\![r]\!]$ with $[\![i]\!]=1+\bsq+\cdots+\bsq^{i-1}.$ By introducing
$[i]=\frac{\up^i-\up^{-i}}{\up-\up^{-1}}$, we define the symmetric Gaussian polynomials $[r]^!$ and $\left[t\atop s\right]$ in $\sZ$
similarly. Clearly, $\left[\!\!\left[t\atop s\right]\!\!\right]=\up^{s(t-s)}\left[t\atop s\right]$.

The letters $m,n$ denote two arbitrary fixed nonnegative integers (not both zero) and, for $h\in[1,m+n]:=\{1,2,\ldots,m+n\}$, define the parity function
\begin{equation}\label{parity}
\hat{h}=\begin{cases}0,&\text{if }1\leq h\leq m;\\1,&\text{if }m+1\leq h\leq m+n.
\end{cases}\end{equation}
Let $\bsq_h=\bsq^{(-1)^{\hat{h}}}$ and $\up_h=\up^{(-1)^{\hat{h}}}$.

 \section{Preliminaries}


 Let $(W,S)$ be the symmetric group on $r$ letters where $W=\fS_r=\fS_{\{1,2,\ldots,r\}}$ and $S=\{s_k\mid 1\leq k<r\}$ is the set of basic transpositions $s_k=(k,k+1),$ and let
$\ell:W\to\mathbb{N}$ be the length function with respect to $S$.

An $N$-tuple
$\lambda=(\lambda_1,\lambda_2,\cdots,\lambda_N)$ of non-negative integers is
called a composition of $r$ into $N$ parts if
$|\la|:=\sum_i\la_i=r$. Let $\La(N,r)$ denote the set of compositions of $r$ into $N$-parts.

The parabolic (or standard Young)
subgroup $W_{\lambda}$ of $W$ associated with a composition $\la$
consists of the permutations of $\{1,2,\cdots,r\}$ which leave
invariant the following sets of integers
$$R^\la_i=\{\tilde\lambda_{i-1}+1,\tilde\lambda_{i-1}+2,\ldots,\tilde\lambda_{i-1}+\lambda_i\}\quad(1\leq i\leq N).$$
Here $\tilde\la_0=0$ and $\tilde\la_i=\sum_{j=1}^i\la_j$ so that $\tilde\la$ is the partial sum sequence associated with $\la$.

We will also denote by $\sD_\la:=\mathcal{D}_{W_\la}$ the set of all
distinguished (or shortest) coset representatives of the right
cosets of $W_\la$ in $W$. 
Let
$\mathcal{D}_{\rho\la}=\mathcal{D}_\rho\cap\mathcal{D}^{-1}_{\la}$, where $\rho\in\La(N,r)$.
Then $\mathcal{D}_{\rho\la}$ is the set of distinguished
$W_\rho$-$W_\la$ double coset representatives. For
$d\in\mathcal{D}_{\rho\la}$, the subgroup $W_\rho^d\cap
W_\la=d^{-1}W_\rho d\cap W_\la$ is a parabolic subgroup associated
with a composition which is denoted by $\rho d\cap\la$. In other
words, we define
\begin{equation}\label{ladmu}
W_{\rho d\cap\la}=W_\rho^d\cap W_\la.
\end{equation}
The composition $\rho d\cap\la$ can be easily described in terms of the following matrix.
For $\rho,\la\in\Lambda(N,r)$, $d\in
\mathcal{D}_{\rho\la}$, let
\begin{equation}\label{jmath}
A=(a_{i,j})=\jmath(\rho,d,\la),\qquad\text{where }a_{i,j}=|R^\rho_i\cap d(R^\la_j)|,
\end{equation}
be the $N\times N$ matrix associated to the
 double coset $W_\rho d W_\la$. Then
 \begin{equation}\label{ladmu}
 \rho d\cap\la=(\nu^1,\nu^2,\ldots,\nu^N),
  \end{equation}
where $\nu^j=(a_{1,j},a_{2,j},\ldots,a_{N,j})$ is the $j$th column of $A$.
In this way, the matrix set
$$M(N,r)=\{\jmath(\rho,d,\la)\mid\rho,\la\in\La(N,r),d\in\sD_{\rho\la}\}$$
is the set of all $N\times N$ matrices over $\mathbb N$ whose entries sum to $r$.
For $A\in M(N,r)$, let
 $$\ro(A):=(\sum_{j=1}^Na_{1,j},\ldots,\sum_{j=1}^Na_{N,j})=\rho\,\text{ and }\,\co(A):=(\sum_{i=1}^Na_{i,1},\ldots,\sum_{i=1}^Na_{i,N})=\la.$$
We also let row$_j(A)$ and col$_j(A)$ denote the $j$th row and $j$th column of $A$, respectively.

For a composition $\la=(\la_1,\ldots,\la_{m+n})\in\La(m+n,r)$, we often rewrite
 $$\la=(\lambda^{(0)}|\lambda^{(1)})=(\lambda^{(0)}_1,\lambda^{(0)}_2,\cdots,\lambda^{(0)}_m|\lambda^{(1)}_1,
\lambda^{(1)}_2,\cdots,\lambda^{(1)}_n)$$
to indicate the``even''
and ``odd'' parts of $\la$ and identify $\Lambda(m+n,r)$ with the set
\begin{equation*}
\begin{aligned}
\Lambda(m|n,r)&=\{\lambda=(\lambda^{(0)}|\lambda^{(1)}) \mid\la\in\La(m+n,r)\}\\
&=\bigcup_{r_1+r_2=r}(\La(m,r_1)\times\La(n,r_2))\\
\end{aligned}
\end{equation*}
and identify the set $\La(m+n,1)$ with the set of standard basis
\begin{equation}\label{bse}
\{\bse_1,\bse_2,\ldots,\bse_{m+n}\}.
\end{equation}
Thus, a parabolic subgroup $W_\la$ associated with  $\lambda=(\lambda^{(0)}|\lambda^{(1)})$ has even part
$W_{\la^{(0)}}$ and odd part $W_{\la^{(1)}}$.

For $d\in\mathcal{D}_{\rho\la}$ with $\rho,\la\in\La(m|n,r)$,  the parabolic subgroup $W_{\rho d\cap \la}$  decomposes into four parabolic subgroups
 \begin{equation}\label{ladmu00-11}
W_{\rho d\cap\la}=W_{\rho d\cap\la}^{00} \times W_{\rho
d\cap\la}^{11}\times W_{\rho d\cap\la}^{01}\times W_{\rho
d\cap\la}^{10},
\end{equation}
where $W_{\rho d\cap\la}^{ij}=W_{\rho^{(i)}}^d\cap W_{\la^{(j)}}$.
In this case, the composition $\nu=\rho d\cap \la$ has the form
$\nu=(\nu^1,\ldots,\nu^{m+n})$ with $\nu^i\in\La(m+n,\la_i)$.

We say that $d$ satisfies the {\it even-odd trivial intersection
property} if $W_{\rho d\cap\la}^{01}=1=W_{\rho d\cap\la}^{10}$. For
$\rho,\la\in\Lambda(m|n,r)$, let
\begin{equation}\label{Dcirc}
\mathcal{D}^\circ_{\rho\la}=\{d\in\mathcal{D}_{\rho\la}\mid
W^d_{\rho^{(0)}}\cap W_{\la^{(1)}}=1,W^d_{\rho^{(1)}}\cap
W_{\la^{(0)}}=1\}.
\end{equation}
This set is the super version of the usual $\sD_{\rho\la}$.
Let
\begin{equation}\label{M(m|n)}
\aligned
M(m|n,r)&=\{\jmath(\rho,d,\la)\mid\rho,\la\in\La(m|n,r),d\in\sD_{\rho\la}^\circ\}\text{ and }\\
M(m|n)&=\bigcup_{r\geq0}M(m|n,r).\endaligned
\end{equation}
Write a matrix $A\in M(m|n)$ in the form $\biggl(\begin{matrix}A_{11}&A_{12}\\A_{21}&A_{22}\end{matrix}\biggr)$, where $A_{11}$ is $m\times m$, and write $|A|=r$ if $A\in M(m|n,r)$. Then the even-odd trivial intersection property implies that the entries in $A_{12}$ and $A_{21}$ are either 0 or 1. In fact, $M(m|n)$ is the set of all $(m+n)\times(m+n)$ matrices over $\mathbb N$ satisfying this property. We also define the parity
$\hat A\in\{0,1\}$ of $A$ by setting
\begin{equation}\label{hatA}
\hat A\equiv |A_{12}|+|A_{21}|\;(\text{mod}\; 2)
\end{equation}
and denote the transpose of $A$ by $A'$. Clearly, $\hat A=\hat A'$.

For
$d\in\mathcal{D}^\circ_{\rho\la}$, if we put
$W_{\nu^{(0)}}=W^d_{\rho^{(0)}}\cap
W_{\la^{(0)}},W_{\nu^{(1)}}=W^d_{\rho^{(1)}}\cap
W_{\la^{(1)}},\nu=(\nu^{(0)}|\nu^{(1)})$, then
$W_\nu=W_{\nu^{(0)}}\times W_{\nu^{(1)}}$. We have, in general,
$\nu\in\La(m'|n',r)$ where $m'=m(m+n)$ and $n'=n(m+n)$.

The Hecke algebra $\mathcal{H}_\bsq(r)=\sH_\bsq(W)$ corresponding to
$W=\fS_r$ is a free $\sA$-module with basis $\{T_w; w\in
W\}$ and the multiplication is defined by the rules: for $s\in S$,
\begin{equation}
{T}_w{T}_s=\left\{\begin{aligned} &{T}_{ws},
&\mbox{if } \ell(ws)>\ell(w);\\
&(\bsq-1)T_w+\bsq{T}_{ws}, &\mbox{otherwise}.
\end{aligned}
\right.
\end{equation}
Let $\sH_\up(r)=\sH_\bsq(r)\otimes_\sA\sZ$.


Let $V(m|n)$ be a free $\sA$-module of rank $m+n$
with basis $v_1,v_2,\cdots,v_{m+n}$. Its tensor product $V(m|n)^{\otimes r}$ has basis
$\{v_\bsi\}_{\bsi\in I(m|n,r)}$ where
$$I(m|n,r)=\{\bsi=(i_1,i_2,\cdots,i_r)\mid 1\leq
i_j\leq m+n,\forall j\}$$
and, for $\bsi=(i_1,i_2,\cdots,i_r)\in I(m| n,r)$, let
$$v_\bsi=v_{i_1}\otimes v_{i_2}\otimes\cdots \otimes v_{i_r}=v_{i_1}v_{i_2}\cdots v_{i_r}.$$
The symmetric group $W$ acts on $I(m|n, r)$ by place
permutation: for $w\in W, \bsi\in I(m|n,r)$
$$\bsi w=(i_{w(1)},i_{w(2)},\cdots,i_{w(r)}).$$
Thus, following \cite{M}, $V(m|n)^{\otimes r}$ is a right
$\sH_\bsq(r)$-module with the following action:\footnote{Replacing the basis
$\{v_\bsi\}_{\bsi\in I(m|n,r)}$ by the basis $\{\up^{-|\bsi|}v_\bsi\}_{\bsi\in I(m|n,r)}$, where $|\bsi|$ is the number of inversions in $\bsi$,
the action of $\up^{-1}T_{s_k}$ on the new basis coincides with the action given in \cite[(2.6.1)]{DGW}.}
\begin{equation}\label{T action}
v_\bsi T_{s_k}=
\begin{cases}
{(-1)}^{\widehat{i_k}\widehat{i_{k+1}}}v_{\bsi s_k}, &\mbox{ if } i_k<i_{k+1};\\
\bsq v_\bsi, &\mbox{ if }1\leq i_k=i_{k+1}\leq m;\\
-v_\bsi, &\mbox{ if }m+1\leq i_k=i_{k+1};\\
(-1)^{\widehat{i_k}\widehat{i_{k+1}}}\bsq v_{\bsi s_k}+(\bsq-1)v_\bsi,&
\mbox{ if } i_k>i_{k+1}
\end{cases}
\end{equation}
Note that when $n=0$, this action coincides with the action on the
usual tensor space as given in \cite[(9.1.1)]{DDPW}.
Clearly, we have the following decomposition into $\sH_\bsq(r)$-modules:
$$V(m|n)^{\otimes r}=\bigoplus_{\la\in\La(m|n,r)}v_\la\sH_\bsq(r).$$

\begin{definition}\label{S(m|n,r)}  The algebra
$$\sS_\bsq(m|n,r):=
\End_{\mathcal{H}_\bsq(r)}(V(m|n)^{\otimes r})$$ is called a $\bsq$-{\it Schur
superalgebra} over $\sA$ with a $\mathbb Z_2$-grading
$$\sS_\bsq(m|n,r)_i=\bigoplus_{|\la^{(1)}|+|\mu^{(1)}|\equiv i(\text{mod}2)}\Hom_{\sH_\bsq(r)}(v_\la\sH_\bsq(r),v_\mu\sH_\bsq(r)).$$ Let  $\sS_\up(m|n,r)=\sS_\bsq(m|n,r)\otimes\sZ$ and
$\sS_R(m|n,r)=\sS_\bsq(m|n,r)\otimes R$ for any commutative ring $R$ which is an $\sA$-module.
\end{definition}

Note that $\sE=\End_{\sA}(V(m|n)^{\otimes r})$ is an $\sH_\bsq(r)$-$\Hq$-bimodule and $\Sq$
is the set of all $\Hq$-fixed points:
$$Z_\sE(\Hq)=\{x\in \sE\mid hx=xh,\,\forall h\in\Hq\}.$$
We will use a right-handed function notation for the elements $e$ in $\sE$: $v\mapsto (v)e$ for all
$v\in V(m|n)^{\otimes r}$. This notation is convenient for the $\sH_\bsq(r)$-$\Hq$-bimodule structure on $\sE$.\footnote{However,
we will turn it to a left action from \S6 onwards via the anti-involution $\tau$ below.}

For $\bsi,\bsj\in I(m| n,r)$, we define $e_{\bsi,\bsj}\in
\End_\sA(V(m| n)^{\otimes r})$ to be the $\sA$-linear map
\begin{equation}
(v_{\bsi'})e_{\bsi,\bsj}=\left\{
\begin{aligned}
&v_\bsj,\mbox{ if }\bsi'=\bsi,\\
&0,\mbox{otherwise}.
\end{aligned}
\right.
\end{equation}

For each $\bsi\in I(m|n,r)$, define $\lambda\in \Lambda (m|n,r)$ to
be the weight $wt(\bsi)$ of $v_\bsi$ by setting $\lambda_k=\#\{j\mid
i_j=k,1\leq j\leq
r\}$.
For each $\lambda\in \Lambda(m|n,r)$, define $\bsi_\lambda\in I(m+n,r)$ by
$$
\bsi_\lambda=(\underbrace{1,\ldots,1}_{\la_1},\underbrace{2,\ldots,2}_{\la_2},\ldots,\underbrace{m+n,\ldots,m+n}_{\la_{m+n}})=(1^{\la_1},2^{\la_2},\ldots,(m+n)^{\la_{m+n}}).$$
Thus, for every $\bsi$, there is a unique $d\in\sD_\la$ such that $\bsi=\bsi_\la d$ ($\la=wt(\bsi)$).

For $\rho,\la\in\Lambda(m|n,r)$ and
$d\in\mathcal{D}^\circ_{\rho\la}$, let $A=\jmath(\rho,d,\la)$ be as defined in \eqref{jmath} and define the relative norm
\begin{equation}\label{norm basis element}
N_{A'}=N_{\la\rho}^d=N_{W,W_\rho^d\cap W_\la}(e_{\la,\rho d}):=\sum_{w\in\mathcal{D}_{\rho
d\cap\la}}\bsq^{-\ell(w)}T_{w^{-1}}e_{\la,\rho d}T_w\in\sE,
\end{equation}
where $e_{\la,\rho d}=e_{\bsi_\la,\bsi_\rho d}$ and $A'=\jmath(\la,d^{-1},\rho)$ is the transpose of $A$.
Clearly, the grading degree of $N_{A'}$ is $\hat A$.
The first assertion of the following is given in {\cite[5.1]{DGW}}.
\begin{lemma}\label{norm basis} (1) The set $\{N^d_{\la\rho}\mid
\rho,\la\in\Lambda(m|n, r), d\in\mathcal{D}^\circ_{\rho\la}\}$
forms an $\sA$-basis of homogeneous elements for $\sS_\bsq(m|n,r )$. Moreover, $N_{A'}N_\la=\delta_{\la,\ro(A)}N_{A'}$ and
$N_\la N_{A'}=\delta_{\la,\co(A)}N_{A'}$,
where $N_\la=N^1_{\la\la}$.

(2) Matrix transposing induces a superalgebra anti-automorphism
$$\tau:\sS_\bsq(m|n,r)\longrightarrow\sS_\bsq(m|n,r),\quad N_A\longmapsto N_{A'}.$$
\end{lemma}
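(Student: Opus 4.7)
The basis statement in (1) is \cite[5.1]{DGW}. It remains to establish the multiplication identities with $N_\la$ in (1) and the anti-automorphism in (2).

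\emph{Multiplication with $N_\la$.} The plan is to identify $N_\la$ with the projector onto $v_\la\sH_\bsq(r)$ in the decomposition $V(m|n)^{\otimes r}=\bigoplus_\mu v_\mu\sH_\bsq(r)$. Because $N_\la\in\sS_\bsq(m|n,r)=Z_\sE(\sH_\bsq(r))$ commutes with the right Hecke action and the summands are $\sH_\bsq(r)$-submodules, it is enough to evaluate $N_\la$ on each generator $v_{\bsi_\mu}$. The Hecke formula \eqref{T action} only permutes the entries of an index tuple, so preserves weight; hence for $\mu\ne\la$ the factor $e_{\la,\la}$ kills every term in the defining sum, yielding $N_\la|_{v_\mu\sH_\bsq(r)}=0$. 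For $\mu=\la$, the $w=1$ term contributes $v_{\bsi_\la}$; for $w\in\sD_\la\setminus\{1\}$ the coefficient of $v_{\bsi_\la}$ in $(v_{\bsi_\la})T_{w^{-1}}$ vanishes, which one sees by introducing the pairing $\langle v_\bsi,v_\bsj\rangle=\bsq^{\ell(d_\bsi)}\delta_{\bsi,\bsj}$ (where $\bsi=\bsi_\la d_\bsi$, $d_\bsi\in\sD_\la$) under which every $T_{s_k}$ is self-adjoint: the coefficient equals $\langle v_{\bsi_\la},v_{\bsi_\la}T_w\rangle$, and a length induction on a reduced expression of $w$ (whose first letter cannot lie in $W_\la$) shows $v_{\bsi_\la}T_w=\pm v_{\bsi_\la w}$ with $\bsi_\la w\ne\bsi_\la$ since $w\notin W_\la$. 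Granting that $N_\la$ is this projector, and noting that $e_{\la,\rho d}$ sends $\bsi_\la$ (of weight $\la=\co(A)$) to $\bsi_\rho d$ (of weight $\rho=\ro(A)$), the identities $N_{A'}N_\la=\delta_{\la,\ro(A)}N_{A'}$ and $N_\la N_{A'}=\delta_{\la,\co(A)}N_{A'}$ follow directly from the right-composition convention.

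\emph{Anti-automorphism.} For (2), the plan is to realise $\tau$ as the adjoint with respect to the same pairing. Since each $T_{s_k}$, and hence each $T_w$ with $T_w^*=T_{w^{-1}}$, is self-adjoint, the induced anti-involution $*:\sE\to\sE$ restricts to an anti-involution of $\sS_\bsq(m|n,r)$; on matrix units, $(e_{\bsi,\bsj})^*=(c_\bsj/c_\bsi)e_{\bsj,\bsi}$ with $c_\bsi=\bsq^{\ell(d_\bsi)}$. Applying $*$ to the defining sum for $N_{A'}=\sum_w\bsq^{-\ell(w)}T_{w^{-1}}e_{\la,\rho d}T_w$ and reindexing the double-coset sum by the bijection $w\mapsto d^{-1}wd:\sD_{\rho d\cap\la}\to\sD_{\la d^{-1}\cap\rho}$, which arises from the conjugation isomorphism $W_{\rho d\cap\la}\xrightarrow{\sim}W_{\la d^{-1}\cap\rho}$, the expression collapses into the formula defining $N_A$. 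Linear extension then yields $\tau$.

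The principal obstacle is the bookkeeping of parity signs and scalar factors under the conjugation $w\mapsto d^{-1}wd$: this map is not length-preserving in general, so $T_{d^{-1}wd}\ne T_{d^{-1}}T_wT_d$ on the nose, and one must carefully track how powers of $\bsq$ coming from the pairing, from $\bsq^{-\ell(w)}$, and from the ratio $c_\bsj/c_\bsi$ combine across the reindexing. Moreover, $\tau$ must be a \emph{super}algebra anti-automorphism, so the sign factors $(-1)^{\hat{i_k}\hat{i_{k+1}}}$ in \eqref{T action} must be shown to align with the parity $\hat A=|A_{12}|+|A_{21}|\pmod 2$. The even-odd trivial intersection property built into $\sD_{\rho\la}^\circ$, which forces entries of $A_{12}$ and $A_{21}$ to be $0$ or $1$ and guarantees $\hat A=\hat{A'}$, together with the block decomposition \eqref{ladmu00-11} of $W_{\rho d\cap\la}$, are essential in making all signs cancel appropriately.
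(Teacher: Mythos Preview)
The paper does not supply its own proof of this lemma: it cites \cite[5.1]{DGW} for the basis statement and records the remaining assertions without argument, so there is no ``paper's proof'' to compare against beyond that citation.

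Your argument for the multiplication identities in (1) is correct. The identification of $N_\la$ with the projector onto $v_\la\sH_\bsq(r)$ is exactly right, and the key step---that $v_{\bsi_\la}T_w=\pm v_{\bsi_\la w}$ for $w\in\sD_\la$---holds because every prefix of a reduced word for such $w$ again lies in $\sD_\la$, and at each step only the first case of \eqref{T action} applies (equal adjacent entries would force the next prefix out of $\sD_\la$). The identities then follow from the source/target description of $N_{A'}$ under the right-composition convention.

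For (2), however, you have an outline rather than a proof. Applying your adjoint $*$ yields
\[
(N_{A'})^{*}=\bsq^{\ell(d)}\sum_{w\in\sD_{\rho d\cap\la}}\bsq^{-\ell(w)}T_{w^{-1}}e_{\rho d,\la}T_w,
\]
which has the wrong coset system ($\sD_{\rho d\cap\la}$ instead of $\sD_{\la d^{-1}\cap\rho}$) and the wrong matrix unit ($e_{\rho d,\la}$ instead of $e_{\rho,\la d^{-1}}$) compared with the defining sum for $N_A$. Your proposed reindexing $w\mapsto d^{-1}wd$ does carry $W_{\rho d\cap\la}$ to $W_{\la d^{-1}\cap\rho}$, but it does \emph{not} carry distinguished representatives to distinguished representatives, so the expression does not collapse as asserted; you flag this honestly, but you do not resolve it, and the relative norm genuinely depends on using the shortest representatives, so there is real work left. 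A cleaner route is to observe that both $(N_{A'})^{*}$ and $N_A$ lie in $\Hom_{\sH_\bsq(r)}(v_\rho\sH_\bsq(r),v_\la\sH_\bsq(r))$ and are therefore determined by their values on $v_{\bsi_\rho}$, which can be compared directly using the explicit evaluation formula derived at the start of the proof of Lemma~\ref{main lemma}; alternatively, the transpose symmetry is manifest in the $\phi_A$-basis of \cite{DR}, to which $N_{A'}$ is related up to an explicit sign and power of $\up$ by \cite[8.3,8.4]{DGW}.

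Finally, you should settle whether ``superalgebra anti-automorphism'' is meant in the graded sense $\tau(xy)=(-1)^{\hat x\hat y}\tau(y)\tau(x)$ or the ungraded sense. An adjoint with respect to an $\sA$-bilinear form is an \emph{ordinary} anti-involution, satisfying $(xy)^{*}=y^{*}x^{*}$ without sign; if the graded convention is intended, then $*$ can agree with $\tau$ only after a parity twist, and this must be built into the argument rather than left as a remark.
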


Sign manipulating is an important issue in the super theory. The number of ``odd inversions'' in $\bsi_\la d$ will be frequently encountered later on.

\begin{definition}\label{dhat}
For $\lambda\in\Lambda(m|n,r), d\in \mathcal{D}_\lambda$, if
$\bsi_\lambda d=(i_1,i_2,\cdots,i_r)$, define a number in $\mathbb N$ (not in $\mathbb Z_2$)
$$(\la,d)^\wedge =\sum_{k=1}^{r-1}\sum_{k<l,i_k>i_l}\widehat{i_k}\widehat{i_l}.$$
When $\la$ is clear from the context, we write
$\whd=(\la,d)^\wedge$. \end{definition}
Note that $(-1)^{\whd}(-1)^{\widehat{i_k}\widehat{i_{k+1}}}=(-1)^{\widehat{ds_k}}\text{
for all } d\in\mathcal{D}_\lambda, s_k\in S\text{ with
}ds_k\in\mathcal{D}_\lambda.$

\section{Some Algorithms}

Recall from \eqref{jmath} the map $\jmath$ taking a double coset $W_\rho dW_\la$ to a matrix
$\jmath(\rho,d,\la)$. We now look at the inverse map. Given a matrix $A\in M(N,r)$, we want to construct $d\in\sD_{\rho\la}$, where $\rho=\ro(A)$ and $\la=\co(A)$ such that $A=\jmath(\rho,d,\la)$. In \cite{Du96}, there is an algorithm to compute $d$ as a permutation function. For the proof of the key lemma in next section, we develop in this section another algorithm to compute $d$ as a reduced product of basic transpositions. Moreover, we will modify this algorithm to compute the action of $T_d$ on the tensor vector $v_\rho=v_{\bsi_\rho}$ and on its variations.

Given a matrix $A=(a_{i,j})\in M(N,r)$ with $\rho:=\ro(A)$ and $\la:=\co(A)$, $d\in\sD_{\rho\la}$ is the element satisfying
$$\aligned
 \bsi_\rho d&=(1^{\rho_1},2^{\rho_2},\ldots,N^{\rho_{N}})d\\
 &=(1^{a_{1,1}},2^{a_{2,1}},\ldots,N^{a_{N,1}},1^{a_{1,2}},2^{a_{2,2}},\ldots,N^{a_{N,2}},\ldots,1^{a_{1,N}},2^{a_{2,N}},\ldots,N^{a_{N,N}}).\\
 \endaligned$$

We may contruct $d$ by the following algorithm.
As before, for a sequence $\mu=(\mu_1,\ldots,\mu_N)$, let $\tilde\mu$ be the associated partial sum sequence with $\tilde \mu_i=\mu_1+\cdots+\mu_i$. Similarly, let $\tilde a_{i,j}=a_{1,j}+\cdots+a_{i,j}$ and write $s_{(i_1,i_2,\ldots,i_b)}=s_{i_1}s_{i_2}\cdots s_{i_b}$.

\begin{algorithm}[Computing $d$ from $A$] \label{Algo1} Let $A=(a_{i,j})\in M(N,r)$.

 {\tt
 \noindent
 Set $\rho:=\ro(A)$, $\la:=\co(A)$, $d=1$,  $\col_0(A)=\bf 0$, $\theta=\bf0$, $\la_0=0$;

 \noindent
for $j$ from 1 to $N-1$, do  $w_j:=1$, $\rho:=\rho-\col_{j-1}(A)$, $\theta:=\theta+(\la_{j-1},\ldots,\la_{j-1})$;

         for $i$ from 2 to $N$, do
         $$\aligned w_{ij}&:= s_{\theta+(\tilde\rho_{i-1},\tilde\rho_{i-1}-1,\ldots, \tilde a_{i-1,j}+1)}s_{\theta+(\tilde\rho_{i-1}+1,\tilde\rho_{i-1},\ldots,\tilde a_{i-1,j}+2)} \cdots
 s_{\theta+(\tilde\rho_{i-1}+a_{i,j}-1,\tilde\rho_{i-1}+a_{i,j}-2,\ldots,\tilde a_{i,j})}, \\
 w_j&:=w_j*w_{ij};\endaligned$$

         end do  $d:=d*w_j$; end do;  \qquad OUTPUT($d$);
         }

Here, the inner loops build the subsequence $1^{a_{1,j}},2^{a_{2,j}},\ldots,N^{a_{N,j}}$, while the outer loops
 indicate the construction is done from column 1 to column $N-1$ of $A$.\footnote{Once the construction is done for column $N-1$, that for column $N$ is automatic.} The element $w_{ij}$
   moves the block $i^{a_{i,j}}$ to the left of  the sequence
$1^{\rho_1-\sum_{l=1}^j a_{1,l}},2^{\rho_2-\sum_{l=1}^j a_{2,l}},\ldots, \linebreak(i-1)^{\rho_{i-1}-\sum_{l=1}^j a_{i-1,l}}$.
         \end{algorithm}

By the Algorithm, we see that $w_j$ is a product of $N-1$ permutations $w_{2,j},\ldots, w_{N,j}$ which move $2^{a_{2,j}}$ leftwards by $w_{2,j}$ to form $1^{a_{1,j}},2^{a_{2,j}}$, etc., and move $N^{a_{N,j}}$ leftwards by $w_{N,j}$ to form $1^{a_{1,j}},2^{a_{2,j}},\ldots,N^{a_{N,j}}$. For example,
$$\aligned
 w_{2,1}&=s_{(\tilde\rho_1,\tilde\rho_1-1,\ldots,\tilde a_{1,1}+1)}s_{(\tilde\rho_1+1,\tilde\rho_1,\ldots,\tilde a_{1,1}+2)} \cdots
 s_{(\tilde\rho_1+a_{2,1}-1,\tilde\rho_1+a_{2,1}-2,\ldots,\tilde a_{2,1})}\cdot \\
w_{3,1} &=s_{(\tilde\rho_2,\tilde\rho_2-1,\ldots,\tilde a_{2,1}+1)}s_{(\tilde\rho_2+1,\tilde\rho_2,\ldots, \tilde a_{2,1}+2)} \cdots
 s_{(\tilde\rho_2+a_{3,1}-1,\tilde\rho_2+a_{3,1}-2,\ldots,\tilde a_{3,1})}\cdot \\
 &\quad\,\cdots\\
w_{N,1} &= s_{(\tilde\rho_{N-1},\tilde\rho_{N-1}-1,\ldots, \tilde a_{N-1,1}+1)} \cdots
 s_{(\tilde\rho_{N-1}+a_{N,1}-1,\tilde\rho_{N-1}+a_{N,1}-2,\ldots,\tilde a_{N,1})}. \\ \endaligned $$
Moreover, we have
 $$
\aligned &\ell(w_1)=(\tilde\rho_1-\tilde a_{1,1})a_{2,1}+(\tilde \rho_2-\tilde a_{2,1})a_{3,1}+\cdots+
( \tilde\rho_{N-1}-\tilde a_{N-1,1})a_{N,1}\\
&\ell(w_2)=(\tilde\rho_1-\tilde a_{1,1}-\tilde a_{1,2})a_{2,2}+(\tilde \rho_2-\tilde a_{2,1}-\tilde a_{2,2})a_{3,2}+\cdots+
( \tilde\rho_{N-1}-\tilde a_{N-1,1}-\tilde a_{N-1,2})a_{N,2}\\
&\cdots \cdots\\
&\ell(w_{N-1})=a_{1,N}a_{2,N-1}+(a_{1,N}+a_{2,N})a_{3,N-1}+\cdots+(a_{1,N}+a_{2,N}+\cdots+a_{N-1,N})a_{N,N-1}.
\endaligned$$
Hence, we obtain from the algorithm
$$
 d=w_1w_2\cdots w_{N-1}\;\,\text{ and }\,\;\ell(d)=\sum_{i<k,j>l}a_{i,j}a_{k,l}.$$

We now use the Hecke algebra action on $V(m|n)^{\otimes r}$ defined in \eqref{T action} to compute $v_\rho\cdot T_d$ by modifying Algorithm \ref{Algo1}.

 For $\lambda,\rho\in\Lambda(m|n,r)$, $d\in
\mathcal{D}^\circ_{\rho\lambda}$, set $A=(a_{i,j})\in M(m|n,r)$ to
be the matrix $\jmath(\rho,d,\la)$ associated to the
 double coset $W_\rho d W_\lambda$ and let $\nu=\rho d\cap\la$.

 Since only the $i_k<i_{k+1}$ case occurs when computing $v_\rho\cdot T_d$, by applying the first formula in \eqref{T action} repeatedly, the algorithm above continues to hold with the sign recorded. Hence, we have
 $v_\rho\cdot T_d=(-1)^{\hat d}v_{\bsi_\nu}=(-1)^{\hat d}v^A$, where
 $$v^A=v_1^{a_{1,1}}v_2^{a_{2,1}}\cdots v_N^{a_{N,1}}v_1^{a_{1,2}}v_2^{a_{2,2}}\cdots v_N^{a_{N,2}} \cdots
 v_1^{a_{1,N}}v_2^{a_{2,N}}\cdots v_N^{a_{N,N}}\;(N=m+n).$$
 More precisely, putting $d_i=w_{i+1}\cdots w_{N-1}$,
 $$\aligned
 v_\rho T_d&=(-1)^{\widehat{w_1}}v_1^{a_{1,1}}v_2^{a_{2,1}}\cdots v_N^{a_{N,1}}v_1^{\rho_1-a_{1,1}}v_2^{\rho_2-a_{2,1}}\cdots v_N^{\rho_N-a_{N,1}} \cdot T_{d_1}\\
 &=(-1)^{\widehat{w_1w_2}}v_1^{a_{1,1}}\cdots v_N^{a_{N,1}}v_1^{a_{1,2}}\cdots v_N^{a_{N,2}}
 v_1^{\rho_1-\tilde a_{1,2}}v_2^{\rho_2-\tilde a_{2,2}}\cdots v_N^{\rho_N-\tilde a_{N,2}} \cdot T_{d_2} \\
 &\quad\,\cdots\\
 &=(-1)^{\widehat d}v_1^{a_{1,1}}v_2^{a_{2,1}}\cdots v_N^{a_{N,1}}v_1^{a_{1,2}}v_2^{a_{2,2}}\cdots v_N^{a_{N,2}} \cdots
 v_1^{a_{1,N}}v_2^{a_{2,N}}\cdots v_N^{a_{N,N}},\\\endaligned $$
where $\tilde a_{i,j}=a_{i,1}+\cdots+a_{i,j}$.
In other words, to compute $v_\rho\cdot T_d$, we use the following algorithm:

\vspace{.3cm}

 \noindent
 \begin{algorithm}[Computing $v_1^{\rho_1}v_2^{\rho_2}\cdots v_{m+n}^{\rho_{m+n}}\cdot T_d$] \label{Algo2} Let $A=(a_{i,j})$ be the matrix associated with $(\rho,d,\la)$.

 {\tt
 \noindent
 for $j$ from 1 to $m+n$, do

         for $i$ from 2 to $m+n$,
         move the factor $v_i^{a_{i,j}}$ to the left of  the block\linebreak $v_1^{\rho_1-\sum_{l=1}^j a_{1,l}}\cdots v_{i-1}^{\rho_{i-1}-\sum_{l=1}^j a_{i-1,l}}$ to form $v_1^{a_{i,1}}\cdots v_{i-1}^{a_{i-1,j}}v_i^{a_{i,j}}$
         end move; end do;}
 \end{algorithm}

\begin{lemma}\label{sign} In the algorithm of computing $v_\rho\cdot T_d$, moving $v_i^{a_{i,j}}$ for $i>1$ to the left of  the block $v_1^{\rho_1-\sum_{l=1}^j a_{1,l}}\cdots v_{i-1}^{\rho_{i-1}-\sum_{l=1}^j a_{i-1,l}}$
 contributes the sign
$$(-1)^{\sum_{k=1}^{i-1}
a_{i,j}(\rho_k-\sum_{l=1}^j a_{k,l})\hat i\hat k}$$
Hence, $v_\rho\cdot T_d=(-1)^{\hat d}v_\rho$ and
$$\hat d=\sum_{j=1}^{m+n-1}\sum_{i=2}^{m+n}(\sum_{k=1}^{i-1}
a_{i,j}(\rho_k-\sum_{l=1}^j a_{k,l})\hat i\hat k).$$
\end{lemma}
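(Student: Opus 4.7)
The plan is to analyse a single ``block move'' in Algorithm \ref{Algo2}, account for the sign it contributes via \eqref{T action}, and then multiply signs across all moves to identify the total exponent with $\widehat d$ from Definition \ref{dhat}.

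First I would verify that during the entire execution of Algorithm \ref{Algo2} only the \emph{first} case of \eqref{T action} ever triggers. Indeed, the algorithm proceeds by applying $T_{s_k}$ exactly when the letter in position $k$ is some $v_h$ originating from row $h$, with the letter in position $k+1$ being some $v_i$ with $h<i$; so $i_k<i_{k+1}$ and \eqref{T action} gives the scalar $(-1)^{\widehat{h}\widehat{i}}$. For one step of the inner loop with indices $i,j$, I move the block $v_i^{a_{i,j}}$ leftwards past the block $v_1^{\rho_1-\sum_{l=1}^{j}a_{1,l}}\cdots v_{i-1}^{\rho_{i-1}-\sum_{l=1}^{j}a_{i-1,l}}$. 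Each copy of $v_i$ must slide past each copy of $v_k$ in this block, for each $k<i$. A single crossing contributes $(-1)^{\widehat i\widehat k}$; the total number of crossings between $v_i$'s and $v_k$'s is $a_{i,j}\bigl(\rho_k-\sum_{l=1}^{j}a_{k,l}\bigr)$, yielding the factor stated in the lemma after multiplying over $1\leq k\leq i-1$.

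Next, multiplying the signs over the loops on $i$ (from $2$ to $m+n$) and $j$ (from $1$ to $m+n-1$; the $j=m+n$ pass is empty because $\rho_k-\sum_{l=1}^{m+n}a_{k,l}=0$) gives
\[
v_\rho\cdot T_d=(-1)^{E}v^A,\qquad E=\sum_{j=1}^{m+n-1}\sum_{i=2}^{m+n}\sum_{k=1}^{i-1}a_{i,j}\Bigl(\rho_k-\sum_{l=1}^{j}a_{k,l}\Bigr)\widehat i\,\widehat k,
\]
which is the displayed formula of the lemma. The remaining step is to recognise $E$ as $\widehat d=(\rho,d)^{\wedge}$ from Definition \ref{dhat}. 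Rewriting $\rho_k-\sum_{l=1}^{j}a_{k,l}=\sum_{l=j+1}^{m+n}a_{k,l}$ turns $E$ into
\[
\sum_{1\leq j<l\leq m+n}\;\sum_{1\leq k<i\leq m+n}a_{i,j}\,a_{k,l}\,\widehat i\,\widehat k.
\]
On the other hand, in the sequence $\bsi_\rho d=(1^{a_{1,1}},\dots,(m+n)^{a_{m+n,1}},1^{a_{1,2}},\dots)$ the entries within a single column block are in weakly increasing order, so every inversion pair $(s,t)$ with $s<t$ and $(\bsi_\rho d)_s>(\bsi_\rho d)_t$ arises by choosing a ``left'' column $j$ and row $i$, and a ``right'' column $l>j$ and row $k<i$; this produces exactly $a_{i,j}a_{k,l}$ inversion pairs contributing $\widehat i\widehat k$, matching the sum above.

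The genuinely routine part is the sign computation from \eqref{T action}; the only step requiring a small amount of care is the combinatorial identification with $\widehat d$, where one must be sure that the outer loop's ``remaining column block'' $\sum_{l>j}a_{k,l}$ exactly captures the later-column entries that will eventually sit to the right of the current block, so that the crossings counted by the algorithm correspond bijectively to inversions in $\bsi_\rho d$. Once these two sums are shown equal the lemma follows.
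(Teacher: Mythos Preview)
Your proof is correct and follows the same approach the paper intends: the paper does not supply a separate proof of this lemma, but immediately before it observes that ``only the $i_k<i_{k+1}$ case occurs when computing $v_\rho\cdot T_d$'' so that ``applying the first formula in \eqref{T action} repeatedly'' records the sign; the lemma is then stated as the explicit bookkeeping of this observation. Your argument spells out exactly this bookkeeping, and your additional step---rewriting $\rho_k-\sum_{l\le j}a_{k,l}=\sum_{l>j}a_{k,l}$ and matching the resulting sum $\sum_{j<l}\sum_{k<i}a_{i,j}a_{k,l}\hat i\hat k$ with the odd-inversion count of Definition~\ref{dhat}---is the natural way to see why the exponent equals $\widehat d$, which the paper leaves implicit.
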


For $1\leq k\leq m+n-1$ and $0\leq t\leq a_{h+1,k}-1$, let
\begin{equation}\label{fw}
\aligned
\fw_{h+1}^{k,t}&=v_{h+1}^{\tilde a_{h+1,k-1}+t}v_h
v^{\rho_{h+1}-\tilde a_{h+1,k-1}-t-1}_{h+1}\\
&=v_{h+1}^{a_{h+1,1}}\cdots v_{h+1}^{a_{h+1,k-1}}b_{h+1,k}^{(t)}v_{h+1}^{a_{h+1,k+1}}\cdots v_{h+1}^{a_{h+1,m+n}},
\endaligned
\end{equation}
where $b_{h+1,k}^{(t)}=v_{h+1}^tv_h
{v_{h+1}}^{a_{h+1,k}-t-1}$ and $\tilde a_{i,j}=a_{i,1}+\cdots+a_{i,j}$.
Applying Algorithm \ref{Algo2} to the element
$$v_\rho(\fw_{h+1}^{k,t})=v^{\rho_1}_1 \cdots v^{\rho_h}_h \fw_{h+1}^{k,t}v^{\rho_{h+2}}_{h+2} \cdots
v^{\rho_{m+n}}_{m+n}$$
yields the following.

\begin{lemma}\label{sign2}
Maintain the notation above and assume $a_{h+1,k}\geq1$. We have
$$v_\rho(\fw_{h+1}^{k,t})\cdot T_d=\!\begin{cases}(-1)^{\hat d}\bsq^{\sum_{j>k}a_{h,j}} v^A(b_{h+1,k}^{(t)}),&h<m;\\
(-1)^{\hat d}(-1)^{\sum_{i> m+1,j<k}a_{i,j}}\bsq^{\sum_{j>k}a_{m,j}}v^A(b_{m+1,k}^{(t)}),&h=m;\\
(-1)^{\hat d}v^A(b_{h+1,k}^{(t)}),&h>m,\end{cases}$$
where $v^A(b_{h+1,k}^{(t)})$ is the element obtained from $v^A$ by replacing the factor $v_{h+1}^{a_{h+1,k}}$ by $b_{h+1,k}^{(t)}$
for $0\leq t\leq a_{h+1,k}-1$.
\end{lemma}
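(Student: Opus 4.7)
The plan is to apply Algorithm~\ref{Algo2} to $v_\rho(\fw^{k,t}_{h+1})$ and track how the answer differs from $v_\rho\cdot T_d=(-1)^{\hat d}v^A$. Since $v_\rho(\fw^{k,t}_{h+1})$ is obtained from $v_\rho$ by replacing the $(a_{h+1,1}+\cdots+a_{h+1,k-1}+t+1)$-th entry of the $v_{h+1}$-block by a ``tracked'' $v_h$, and the reduced expression for $T_d$ produced by Algorithm~\ref{Algo1} is independent of the vector being acted on, the same sequence of basic transpositions is executed in both runs; only those that involve the tracked position can change the accumulated sign or scalar.

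A direct inspection shows that this sign/scalar difference originates in only two classes of transpositions. First, at each step $w_{i,j}$ with $i>h+1$ and $j<k$, each of the $a_{i,j}$ copies of $v_i$ being slid leftward past the middle block crosses the tracked $v_h$ exactly once (it sits inside the partially depleted $v_{h+1}$-block). This produces $(-1)^{\hat h\hat i}$ in place of the $(-1)^{\hat{h+1}\hat i}$ that would arise in the $v_\rho$ run, so the per-crossing ratio is $(-1)^{(\hat h+\hat{h+1})\hat i}$, trivial unless $\hat h\ne\hat{h+1}$ (i.e.\ $h=m$) and $\hat i=1$. Since $i>h+1=m+1$ forces $\hat i=1$, the $h=m$ case yields $(-1)^{a_{i,j}}$ per $(i,j)$, multiplying to $(-1)^{\sum_{i>m+1,\,j<k}a_{i,j}}$; for $h\ne m$ no factor arises here. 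Second, at sub-product $t+1$ of $w_{h+1,k}$, the algorithm would slide the tracked $v_h$ leftward past the middle block $v_1^{\rho_1-\sum_{j'\le k}a_{1,j'}}\cdots v_h^{\rho_h-\sum_{j'\le k}a_{h,j'}}$; the $\sum_{j>k}a_{h,j}=\rho_h-\sum_{j'\le k}a_{h,j'}$ encounters with middle $v_h$'s fall under the diagonal case of~\eqref{T action}, producing a scalar $\bsq$ each when $h\le m$ or $-1$ each when $h>m$ with no position exchange, while the $\sum_{l<h}(\rho_l-\sum_{j'\le k}a_{l,j'})$ genuine swaps past $v_l$ for $l<h$ contribute $(-1)^{\hat l\hat h}$ rather than $(-1)^{\hat l\hat{h+1}}$. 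A short case check of the ratio against the corresponding $v_\rho$ sign gives $\bsq^{\sum_{j>k}a_{h,j}}$ when $h\le m$ and $1$ when $h>m$.

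All remaining transpositions---sub-products $t+2,\dots,a_{h+1,k}$ of $w_{h+1,k}$ together with every $w_{i',j'}$ processed afterwards---contribute identical signs in both runs, because after sub-product $t+1$ the two running tensors agree everywhere except at the single position $\tilde\la_{k-1}+\sum_{l\le h}a_{l,k}+t+1$ (holding $v_{h+1}$ in the $v_\rho$ run and $v_h$ in the other), and no later basic transposition in the Algorithm~\ref{Algo1} reduced expression touches that position; equivalently, each subsequent slide traverses the same multiset of factors in both runs. Combining the two contributions above with $(-1)^{\hat d}$ and identifying the surviving tensor with $v^A$ in which $v_{h+1}^{a_{h+1,k}}$ has been replaced by $b^{(t)}_{h+1,k}$ yields the three-case formula. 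The main technical obstacle will be the bookkeeping at sub-product $t+1$: because the degenerate $v_hv_h$-swaps move nothing, I have to verify that a middle $v_h$ effectively slides leftward through the $v_{h-1},\dots,v_1$ segment of the middle block to settle at position $\tilde\la_{k-1}+\sum_{l\le h}a_{l,k}+t+1$, while sub-products $t+2,\dots,a_{h+1,k}$ shuffle the remaining $v_h$'s rightward without disturbing that position, so that the final tensor genuinely equals $v^A(b^{(t)}_{h+1,k})$.
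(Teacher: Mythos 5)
Your proposal is correct and follows essentially the same route as the paper's proof: both run the explicit reduced word of Algorithm~\ref{Algo2} on the modified tensor and compare the accumulated signs and scalars against the computation of $v_\rho\cdot T_d$, isolating the contributions from blocks $v_i^{a_{i,j}}$ ($i>h+1$, $j<k$) crossing the inserted $v_h$ and from the step that slides $b_{h+1,k}^{(t)}$ past the middle block, with the same three-way case split on $h$ versus $m$. Your explicit handling of the degenerate $v_hv_h$ steps (no position exchange, only a scalar) is the same point the paper uses implicitly when it invokes the second and third formulas of \eqref{T action}, so no gap remains.
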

\begin{proof} We first observe that the sign contribution occurs only when moving $v_i^{a_{i,j}}$  to the left
of $v_{i'}^{\rho_{i'}-\sum_{l=1}^{j-1}a_{i',l}}$, where $m<i'<i$. We now have three cases to consider.

If $h<m$, then it is clear that the sign is the same as the sign $(-1)^{\hat d}$ when computing $v_\rho\cdot T_d$. In this case, moving the factor $b_{h+1,k}^{(t)}$ to the left of  the block $v_1^{\rho_1-\sum_{l=1}^k a_{1,l}}\cdots v_h^{\rho_{h}-\sum_{l=1}^k a_{h,l}}$ requires applying the second formula in \eqref{T action} $\sum_{j>k}a_{h,j}=\linebreak\rho_{h}-\sum_{l=1}^k a_{h,l}$ times. This gives the power $\bsq^{\sum_{j>k}a_{h,j}}$.

If $h=m$, then one factor of $v_{m+1}^{a_{m+1,k}}$ is replaced by $v_m$. Thus, for $i>m+1$
and $j\leq k-1$, moving $v_i^{a_{i,j}}$ to the left of $v_{m+1}^{\tilde a_{m+1,k-1}-\tilde a_{m+1,j}+t}v_m
v^{\rho_{m+1}-\tilde a_{m+1,k-1}-t-1}_{m+1}$ loses the sign contribution $(-1)^{a_{i,j}}$.
The power of $\bsq$ is produced similarly by taking $h=m$ in the argument above.

Finally, assume $h>m$. In this case, moving $v_i^{a_{i,j}}$ ($i>h+1$ and $j<k$) to the left
of $v_{h+1}^{\tilde a_{h+1,k-1}-\tilde a_{h+1,j}+t}v_h
v^{\rho_{h+1}-\tilde a_{h+1,k-1}-t-1}_{h+1}$ contributes the same sign as in computing $v_\rho\cdot T_d$, while moving $b_{h+1,k}^{(t)}$ to the left of  the block $v_1^{\rho_1-\sum_{l=1}^k a_{1,l}}\cdots v_h^{\rho_{h}-\sum_{l=1}^k a_{h,l}}$ requires applying the third formula in \eqref{T action} $\rho_{h}-\sum_{l=1}^k a_{h,l}=\sum_{j>k}a_{h,j}$ times, resulting again in the same sign.
\end{proof}

\section{Super version of two key multiplication formulas}
The realisation of quantum $\mathfrak{gl}_n$ requires several important multiplication formulas between certain standard basis elements and between certain elements of a spanning set---the BLM spanning set---for the $\up$-Schur algebra. These formulas were built on two key formulas, see  \cite[Lem~3.2]{BLM}, whose proof used the geometry of (partial) flag varieties.\footnote{The affine analogue of the two key formulas has also been discovered by Lusztig (see
\cite[Lem.~3.5]{Lu}) and the proof is also geometric in nature.}  In this section, we will use an algebraic method to derive the two corresponding formulas in the super case. The algorithm discussed in \S2 is helpful to the success of the method.

 Let $\rho,\la\in\La(m|n,r)$, $d\in\sD^\circ_{\rho\la}$, and $A=\jmath(\rho,d,\la)$.
 Fix $1\leq h<m+n$, and let
 $$\mu=(\rho_1,\rho_2,\cdots,\rho_h+1,\rho_{h+1}-1,\cdots,\rho_{m+n})=\rho+\bse_h-\bse_{h+1}.$$
 It is easy to show $\mathcal{D}_{\rho\mu}=\{1\}$. Set $B=\jmath(\rho,1,\mu)$ to be the
 matrix associated to the double coset $W_\rho 1W_\mu$.

Notice also that, if $W_\nu=W_\rho^d\cap
W_\lambda$, then $\nu=(\nu^1,\nu^2,\cdots,\nu^{m+n})$ where
$\nu^i=\col_i(A)=(a_{1,i},a_{2,i},\cdots,a_{m+n,i})\in\Lambda(m|n,\lambda_i)$.
If we set $W_{\rho'}=W_\mu\cap W_\rho$, then
$\rho'=(\rho_1,\cdots,\rho_h,1,\rho_{h+1}-1,\rho_{h+2},\cdots,\rho_{m+n})$.
Observing the compositions $\rho$ and $\rho'$, we have
$$\mathcal{D}_{\rho'}\cap
W_\rho=\{1,s_{\rho_1+\cdots+\rho_h+1},s_{\rho_1+\cdots+\rho_h+1}s_{\rho_1+\cdots+\rho_h+2},\cdots,s_{\rho_1+\cdots+\rho_h+1}\cdots
s_{\rho_1+\cdots+\rho_{h+1}-1}\}.$$

For $A=(a_{i,j})\in M(m|n,r)$ and $h,k\in[1,m+n]$ with $h<m+n$, define
 \begin{equation}\label{fqAh}
f_k(\bsq;A,h)=\begin{cases}\bsq^{\sum_{j>k}a_{h,j}},&\text{if }h<m;\\
(-1)^{\sum_{i>m,j<k}a_{i,j}}\bsq^{-\sum_{j<k}a_{m+1  ,j}}\bsq^{\sum_{j>k}a_{m,j}},&\text{if }h=m;\\
(\bsq^{-1})^{\sum_{j<k}a_{h+1,j}},&\text{if }h>m,\end{cases}
\end{equation}
and
 \begin{equation}\label{fqAh+1}
g_k(\bsq;A,h+1)=\begin{cases}\bsq^{\sum_{j<k}a_{h+1,j}},&\text{if }h<m;\\
(-1)^{\sum_{i>m,j<k}a_{i,j}},&\text{if }h=m;\\
(\bsq^{-1})^{\sum_{j>k}a_{h,j}},&\text{if }h>m.\end{cases}
\end{equation}
Here is the key lemma of the paper.
\begin{lemma}\label{main lemma}Maintain the notation above. For $A=(a_{i,j})\in M(m|n,r)$ and $h\in[1,m+n)$,
let $D=\diag(\ro(A))$,
$B=D+E_{h,h+1}-E_{h+1,h+1}$ and $C=D-E_{h,h}+E_{h+1,h}$. Then, in the $\bsq$-Schur superalgebra
$\sS_\bsq(m|n,r)$ over $\sA$, the following multiplication formulas
hold (and are homogeneous):
$$\aligned
(1)\qquad N_{A'}N_{B'}&=\sum_{k\in[1,m+n],a_{h+1,k}\geq
1}f_k(\bsq;A,h)[\![a_{h,k}+1]\!]_{\bsq_h}
N_{(A+E_{h,k}-E_{h+1,k})'},\\
(2)\qquad N_{A'}N_{C'}&=\sum_{k\in[1,m+n],a_{h,k}\geq
1}g_k(\bsq;A,h+1)[\![a_{h+1,k}+1]\!]_{\bsq_{h+1}}
N_{(A-E_{h,k}+E_{h+1,k})'}.\endaligned
$$
where $N_{(A\pm E_{h,k}\mp E_{h+1,k})'}=0$ whenever $A\pm E_{h,k}\mp E_{h+1,k}\not\in M(m|n,r)$.
\end{lemma}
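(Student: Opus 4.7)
I would prove formula (1); formula (2) then follows from (1) by applying the superalgebra anti-automorphism $\tau$ of Lemma~\ref{norm basis}(2), which interchanges $N_A \leftrightarrow N_{A'}$ and (after matrix transposition) swaps the roles of rows $h$ and $h+1$. To prove (1), I use that $N_{A'}$ is $\sH_\bsq(r)$-linear and supported on the weight-$\la$ subspace $v_\la\sH_\bsq(r)$, so it suffices to compute the image $(v_\la)N_{A'}N_{B'}$ (in the right-handed notation) and match it against $(v_\la)N_{C'}$ for each candidate $C = A + E_{h,k}-E_{h+1,k}$.

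The first step is to simplify $(v_\la)N_{A'}$. From the relative norm formula $N_{A'} = \sum_{w \in \sD_\nu}\bsq^{-\ell(w)}T_{w^{-1}}e_{\la,\rho d}T_w$ with $\nu = \rho d \cap \la$, the fact that $T_{w^{-1}}$ preserves weight forces $w \in \sD_\nu \cap W_\la$. For such $w$, factoring $w = w_0 w_1 \in W_{\la^{(0)}} \times W_{\la^{(1)}}$ and applying~\eqref{T action} gives $v_\la T_{w^{-1}} = \bsq^{\ell(w_0)}(-1)^{\ell(w_1)}v_\la$; combining with $(v_\la)e_{\la,\rho d} = v_{\rho d} = v^A$ (the identification after Algorithm~\ref{Algo2}) yields
\[
(v_\la)N_{A'} = \sum_{w \in \sD_\nu \cap W_\la}(-\bsq^{-1})^{\ell(w_1)}\,v^A T_w .
\]

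Step~2 is to apply $N_{B'}$. Because $\sD_{\rho\mu} = \{1\}$ for the data giving $B'$, one has $N_{B'} = \sum_{v \in \sD_{\rho'}}\bsq^{-\ell(v)}T_{v^{-1}}e_{\rho,\mu}T_v$ with the explicit chain $\sD_{\rho'}\cap W_\rho = \{1, s_{\tilde\rho_h+1}, s_{\tilde\rho_h+1}s_{\tilde\rho_h+2},\ldots, s_{\tilde\rho_h+1}\cdots s_{\tilde\rho_{h+1}-1}\}$ recalled above Lemma~\ref{main lemma}. Applying this to $v^A T_w$ and extracting the $v_\rho$-coefficient amounts to ``undoing'' a single $v_h$-for-$v_{h+1}$ swap in $v^A$: the extraction selects a column $k$ with $a_{h+1,k} \geq 1$ and an offset $t \in [0, a_{h+1,k}-1]$ within column $k$, producing the element $v^A(b_{h+1,k}^{(t)})$ of~\eqref{fw}. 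Lemma~\ref{sign2} then gives the exact sign/$\bsq$-power contributed by each $(k,t)$ pair, matching the prefactor $f_k(\bsq;A,h)$ of~\eqref{fqAh} in the three cases $h<m$, $h=m$, $h>m$. After summing over $t$, combining with the $(-\bsq^{-1})^{\ell(w_1)}$ from Step~1, and collecting the cost of resorting the inserted $v_h$ into its canonical position, the total coefficient of $N_{(A+E_{h,k}-E_{h+1,k})'}$ telescopes to $f_k(\bsq;A,h)\,[\![a_{h,k}+1]\!]_{\bsq_h}$, which is exactly formula~(1).

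The main obstacle is the sign bookkeeping at $h=m$: moving $v_{m+1}$ past odd factors $v_i$ ($i>m$) in columns $j<k$ introduces the sign $(-1)^{\sum_{i>m,j<k}a_{i,j}}$, while moving it past even factors $v_{\leq m}$ produces the $\bsq$-power $\bsq^{\pm\sum a_{m,j}}$. These must be reconciled with the odd-part sign $(-1)^{\ell(w_1)}$ from Step~1 and with the asymmetric super-$\bsq$-integer $[\![a_{h,k}+1]\!]_{\bsq_h}$ (which is $[\![a_{h,k}+1]\!]_\bsq$ for $h<m$ and a signed analogue for $h>m$). In the non-super case such issues are subsumed by the geometry of (partial) flag varieties as in~\cite[Lemma~3.2]{BLM}; here Algorithm~\ref{Algo2} and Lemma~\ref{sign2} provide the required algebraic substitute.
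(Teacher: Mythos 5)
Your reduction of (2) to (1) does not work. The map $\tau$ of Lemma \ref{norm basis}(2) is an \emph{anti}-automorphism, so applying it to formula (1) yields (up to the sign $(-1)^{\hat A\hat B}$) a formula for $N_BN_A$, i.e.\ for \emph{left} multiplication by the transposed element $N_B$ — this is exactly how the paper later converts Theorem \ref{U_i and L_i} into the left-handed formulas for $\xib_{U_p}\xib_A$ — and not a formula for $N_{A'}N_{C'}$. Moreover, transposition does not "swap the roles of rows $h$ and $h+1$": at the level of matrices, $B'=D+E_{h+1,h}-E_{h+1,h+1}$ differs from $C=D-E_{h,h}+E_{h+1,h}$ (they have different row and column vectors), and the paper provides no automorphism exchanging the "$E$-type" modification $B$ with the "$F$-type" modification $C$. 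Formula (2) must be proved by rerunning the whole computation with the roles of $h$ and $h+1$ reversed (new $\mu$, $\rho'$, $\sigma^k$, $\tau^k$), which is what the paper does.

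For part (1) your outline does follow the paper's route (expand both norms, isolate $\Upsilon=\sum_{w'\in\sD_{\rho'}\cap W_\rho}(-\bsq^{-1})^{\ell(w'_1)}v_\mu T_{w'}$, evaluate $\Upsilon\cdot T_d$ by Lemma \ref{sign2} to produce $f_k(\bsq;A,h)$), but the two hardest steps are asserted rather than argued. First, "extracting the $v_\rho$-coefficient" conceals the combinatorial fact that $(v_{\rho d}T_xT_{w^{-1}})e_{\rho,\mu}\neq0$ forces $w=w'dx$ with $w'\in\sD_{\rho'}\cap W_\rho$ and additive lengths; this is what lets the double sum be reorganised into $\sum_x(-\bsq^{-1})^{\ell(x_1)}(\Upsilon\cdot T_d)T_x$. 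Second, the claim that the remaining coefficient "telescopes to $[\![a_{h,k}+1]\!]_{\bsq_h}$" is the genuinely hard half of the proof: one must compare $\sum_{x\in\sD_\nu\cap W_\la}(-\bsq^{-1})^{\ell(x_1)}v^A(b^\bullet_{h+1,k})T_x$ with $\sum_{x\in\sD_\sigma\cap W_\la}(-\bsq^{-1})^{\ell(x_1)}v_\sigma T_x$ via the coset factorisations through the intermediate parabolic $W_\tau$, and this is exactly where the vanishing cases live — e.g.\ for $\hat k=1$ and $a_{h,k}=1$ the sum over $\sD_{\tau^k}\cap W_{\sigma^k}=\{1,s_{\tilde a}\}$ cancels, which is what justifies the convention $N_{(A+E_{h,k}-E_{h+1,k})'}=0$ when the target matrix leaves $M(m|n,r)$. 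Without that case analysis the formula as stated is not established.
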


\begin{proof} We first prove (1).
Since $N_{A'}=N_{\la\rho}^d,N_{B'}=N_{\rho\mu}^1$ and $N_{(A+E_{h,k}-E_{h+1,k})'}=N_{\la\mu}^{d'}$ for some $d'\in\mathcal{D}^\circ_{\mu\lambda}$, it suffices to prove that
the actions of both sides on $v_\la$ is the same.
By \eqref{norm basis element},
\begin{equation*}
\begin{aligned}
(v_\lambda) N^d_{\lambda\rho}N_{\rho\mu}^1&=((v_\lambda) N_{W,W_\rho^d\cap
W_\lambda}(e_{\la,\rho d}))N_{W,W_\rho\cap W_\mu}(e_{\rho,\mu})\\
&=((v_\lambda) \sum_{x\in\mathcal{D}_\nu\cap
W_\lambda}\bsq^{-\ell(x)}T_{x^{-1}}e_{\lambda,\rho
d}T_x )\sum_{w\in\mathcal{D}_{\rho'}}\bsq^{-\ell(w)}T_{w^{-1}}e_{\rho,\mu}T_w.\\
\end{aligned}
\end{equation*}
Write $x=x_0x_1$ with $x_i\in W_{\la^{(i)}}$. By \eqref{T action}, $v_\la\cdot T_x=x^{\ell(x_0)}(-1)^{\ell(x_1)}v_\la$. Thus,
\begin{equation*}
\begin{aligned}
(v_\lambda) N^d_{\lambda\rho}N_{\rho\mu}^1
&=(\sum_{x\in\mathcal{D}_\nu\cap
W_\lambda}(-\bsq^{-1})^{\ell(x_1)}v_{\rho
d}\cdot T_x)\sum_{w\in\mathcal{D}_{\rho'}}\bsq^{-\ell(w)}T_{w^{-1}}e_{\rho,\mu}T_w\\
&=\sum_{x\in\mathcal{D}_\nu\cap
W_\lambda,w\in\mathcal{D}_{\rho'}}(-\bsq^{-1})^{\ell(x_1)}\bsq^{-\ell(w)}(v_{\rho
d}\cdot T_xT_{w^{-1}})e_{\rho,\mu}T_w,\quad\text{by \eqref{T action}}.\\
\end{aligned}
\end{equation*}
Now, $(v_{\rho
d}\cdot T_xT_{w^{-1}})e_{\rho,\mu}\neq0$ if and only if $w=w'dx$ for some $w'\in\sD_{\rho'}\cap W_\rho$. Write $w'=w'_0w'_1$ with $w'_i\in W_{\rho^{(i)}}$. Then, in this case, $\bsq^{-\ell(w)}(v_{\rho
d}\cdot T_xT_{w^{-1}})e_{\rho,\mu}=\bsq^{-\ell(w)}(-1)^{\hat d}(v_{\rho}\cdot T_{dx}T{_{(dx)^{-1}}}T_{w^{\prime-1}})e_{\rho,\mu}=(-1)^{\hat d}\bsq^{-\ell(w')}(v_\rho T_{w^{\prime-1}})e_{\rho,\mu}$.
Hence,
\begin{equation}
\begin{aligned}
(v_\lambda) N^d_{\lambda\rho}N_{\rho\mu}^1&=\sum_{\substack {x\in\mathcal{D}_\nu\cap W_\lambda\\
w'\in\mathcal{D}_{\rho'}\cap
W_\rho}}(-\bsq^{-1})^{\ell(x_1)}(-1)^{\hat{d}}\bsq^{-\ell(w')}\bsq^{\ell(w'_0)}(-1)^{\ell(w'_1)}v_\mu
T_{w'dx}\\
&=\sum_{\substack{ x\in\mathcal{D}_\nu\cap W_\lambda\\
w'\in\mathcal{D}_{\rho'}\cap
W_\rho}}(-1)^{\hat{d}}(-\bsq^{-1})^{\ell(x_1)}(-\bsq^{-1})^{\ell(w'_1)}v_\mu
T_{w'dx}\\
&=(-1)^{\hat{d}}\sum_{x\in\mathcal{D}_\nu\cap W_\lambda
}(-\bsq^{-1})^{\ell(x_1)}
(\Upsilon\cdot T_d)T_x,\label{LHS}
\end{aligned}
\end{equation}
where
$\Upsilon:=\sum_{w'\in\mathcal{D}_{\rho'}\cap
W_\rho}(-\bsq^{-1})^{\ell(w'_1)}v_\mu\cdot
T_{w'}$ and $\hat d=(\rho, d)^\wedge=\sum_{i>k>m,j<l}a_{i,j}a_{k,l}$.
Thus, by observing
$$\mathcal{D}_{\rho'}\cap W_\rho=\begin{cases}\mathcal{D}_{\rho'}\cap W_{\rho^{(0)}},&\text{if }h<m;\\
\mathcal{D}_{\rho'}\cap W_{\rho^{(1)}},&\text{if }h\geq m.\end{cases}$$
we see that
$$\Upsilon=\begin{cases}\sum_{w'\in\mathcal{D}_{\rho'}\cap W_\rho}v_\mu \cdot T_{w'},&\text{if }h<m;\\
\sum _{w'\in\mathcal{D}_{\rho'}\cap W_\rho}(-q^{-1})^{\ell(w')}v_\mu\cdot
T_{w'},&\text{if }h\geq m.\end{cases}$$
Since
$\mathcal{D}_{\rho'}\cap
W_\rho=\{1,s_{\tilde\rho_h+1},s_{\tilde\rho_h+1}s_{\tilde\rho_h+2},\cdots,s_{\tilde\rho_h+1}\cdots
s_{\tilde\rho_h+\rho_{h+1}-1}\},$
it follows from \eqref{T action} that,
$$\Upsilon=\begin{cases}
v^{\rho_1}_1 \cdots v^{\rho_h}_h( \sum_{i=1}^{\rho_{h+1}} v_{h+1}^{i-1}v_hv_{h+1}^{\rho_{h+1}-i})v^{\rho_{h+2}}_{h+2} \cdots v^{\rho_{m+n}}_{m+n},&\text{if }h<m;\\
v^{\rho_1}_1 \cdots v^{\rho_h}_h ( \sum_{i=1}^{\rho_{h+1}}(-\bsq^{-1})^{i-1} v_{h+1}^{i-1}v_hv_{h+1}^{\rho_{h+1}-i}) v^{\rho_{h+2}}_{h+2} \cdots v^{\rho_{m+n}}_{m+n},&\text{if }h= m;\\
v^{\rho_1}_1 \cdots v^{\rho_h}_h ( \sum_{i=1}^{\rho_{h+1}}\bsq^{-(i-1)} v_{h+1}^{i-1}v_hv_{h+1}^{\rho_{h+1}-i}) v^{\rho_{h+2}}_{h+2} \cdots v^{\rho_{m+n}}_{m+n},&\text{if }h> m.\\
\end{cases}
$$
Let 
\begin{equation}\label{vAb}
v^A(b_{h+1,k}^\bullet)=\begin{cases}
\sum_{t=0}^{a_{h+1,k}-1}v^A(b_{h+1,k}^{(t)}),&\text{ if }h<m;\\
\sum_{t=0}^{a_{m+1,k}-1}(-\bsq^{-1})^tv^A(b_{h+1,k}^{(t)}),&\text{ if }h=m;\\
\sum_{t=0}^{a_{h+1,k}-1}\bsq^{-t}v^A(b_{h+1,k}^{(t)}),&\text{ if }h>m;
\end{cases}
\end{equation}
cf. Lemma \ref{sign2}.

If $h<m$, then $ \sum_{i=1}^{\rho_{h+1}} v_{h+1}^{i-1}v_hv_{h+1}^{\rho_{h+1}-i}=\sum_{\substack{k\in[1,m+n]\\a_{h+1,k}\geq1}}\sum_{t=0}^{a_{h+1,k}-1}\fw_{h+1}^{k,t}$, and so
$$\aligned
\Upsilon\cdot T_d&=\sum_{\substack{k\in[1,m+n]\\a_{h+1},k\geq1}}\sum_{t=0}^{a_{h+1,k}-1}
v^{\rho_1}_1 \cdots v^{\rho_h}_h
\fw_{h+1}^{k,t}v^{\rho_{h+2}}_{h+2} \cdots v^{\rho_{m+n}}_{m+n}\cdot T_d,\\
&=(-1)^{\hat{d}}\sum_{\substack{k\in[1,m+n]\\ a_{h+1,k}\geq
1}}\bsq^{\sum_{j>k}a_{hj}}v^A(b_{h+1,k}^\bullet),\;\text{
by Lemma \ref{sign2}.}
\endaligned
$$
If $h\geq m$ and $\star:={\sum_{i> m+1,j<k}a_{i,j}}$, then
$$\aligned
\Upsilon\cdot T_d&=\sum_{\substack{k\in[1,m+n]\\a_{h+1,k}\geq1}}\sum_{t=0}^{a_{h+1,k}-1}v^{\rho_1}_1 \cdots v^{\rho_h}_h
((-1)^{\delta_{h,m}}\bsq^{-1})^{\tilde a_{h+1,k-1}+t}\fw_{h+1}^{k,t}v^{\rho_{h+2}}_{h+2} \cdots v^{\rho_{m+n}}_{m+n}\cdot T_d\\
&=\begin{cases}(-1)^{\hat{d}}
\sum_{\substack{k\in[1,m+n]\\a_{m+1  ,k}\geq
1}}(-1)^\star(-\bsq^{-1})^{\sum_{j<k}a_{m+1  ,j}}\bsq^{\sum_{j>k}a_{m,j}}v^A(b_{m+1,k}^\bullet),&\text{if }h=m\\
(-1)^{\hat{d}}\sum_{k\in[1,m+n],a_{h+1,k}\geq
1}(\bsq^{-1})^{\sum_{j<k}a_{h+1,j}}v^A(b_{h+1,k}^\bullet),&\text{if }h>m.\\\end{cases}
\endaligned$$
Thus, with the notation given in \eqref{fqAh},
the RHS of \eqref{LHS} becomes
$$\sum_{\substack{k\in[1,m+n]\\ a_{h+1,k}\geq
1}}f_k(\bsq;A,h)
\sum_{x\in\mathcal{D}_\nu\cap W_\lambda
}(-q^{-1})^{\ell(x_1)}
v^A(b_{h+1,k}^\bullet)\cdot T_x=\sum_{\substack{k\in[1,m+n]\\ a_{h+1,k}\geq
1}}f_k(\bsq;A,h)\Upsilon'(k;A,h)
,$$
where
$$\Upsilon'=\Upsilon'(k;A,h):=\sum_{x\in\mathcal{D}_\nu\cap W_\lambda
}(-\bsq^{-1})^{\ell(x_1)}
v^A(b_{h+1,k}^\bullet)\cdot T_x.$$

By comparing this with the formulas in the theorem, it  remains to prove that, for all $k\in[1,m+n]$ with $a_{h+1,k}\ge1$,
\begin{equation}\label{last}
\Upsilon'=\begin{cases}
[\![a_{h,k}+1]\!]_{\bsq_h}(v_\la)N_{(A+E_{h,k}-E_{h+1,k})'},&\text{ if }A+E_{h,k}-E_{h+1,k}\in M(m|n,r),\\
0,&\text{otherwise.}
\end{cases}
\end{equation}

We now compute the left hand side of \eqref{last}.
 Recall the composition $\nu=\nu(A)$ defined by the columns of $A$.
 Take a note of the right hand side of \eqref{last}:   for $a_{h+1,k}\geq 1$,
$$\boxed{(v_\la)N_{(A+E_{h,k}-E_{h+1,k})'}=\sum_{x\in\sD_\sigma\cap W_\la}(-\bsq^{-1})^{\ell(x_1)}v_\sigma\cdot T_x,}$$
where
  $$\sigma=\sigma(h,k)=\nu(A+E_{h,k}-E_{h+1,k})=(\sigma^1,\sigma^2,\ldots,\sigma^{m+n}).$$
 Clearly, $\sigma$ is obtained from $\nu$ by replacing
  $\nu^k=(a_{1k},a_{2k},\cdots,a,b,\cdots,a_{m+n,k}),
  $ where $a=a_{h,k}$, $b=a_{h+1,k}$,
   by
  $${\sigma}^{k}=(a_{1k},a_{2k},\cdots,a+1,b-1,\cdots,a_{m+n,k}).$$
We also define $\tau=\tau(h,k)$ to be the composition obtained from $\nu$ by replacing $\nu^k$ by
    $$\tau^{k}=(a_{1,k},a_{2,k},\cdots,a,1,b-1,\cdots,a_{m+n,k}).$$
Note that the relationship between $\nu^k,\sigma^k,$ and $\tau^k$  are similar to the relationship between $\rho,\mu$, and $\rho'$. Note also that $W_{\nu}\cap W_{\sigma}=W_{\tau}$ and so
$W_\tau\leq W_\sigma$ and $W_\tau\leq W_\nu$.
In particular, we have
\begin{equation}\label{MM'}
\begin{cases}(1)& b=1\implies W_\tau=W_\nu;\\
(2)& a=0\implies W_\tau=W_\sigma;\\
(3)& a=1\implies W_\sigma=W_\tau\cup W_\tau s_{\tilde a},
\end{cases}
\end{equation}
where  $$\tilde a=|\nu^1|+\cdots+|\nu^{k-1}|+a_{1,k}+\cdots+a_{h-1,k}+a_{h,k}=\tilde a'+a\,(a'=a_{h-1,k}).$$
Since $\sigma^{k'}=\nu^{k'}=\tau^{k'}$ for all $k'\neq k$, it follows that, for each $\iota\in\{\nu,\sigma,\tau\}$, there is a commuting set decomposition
\begin{equation}\label{DD'}
\mathcal{D}_{\iota}\cap W_{\lambda }=\sD(\mathcal{D}_{\iota^k }\cap
W_{\lambda_k })\sD',
\end{equation}
where
$\mathcal{D}=\mathcal{D}_{\nu^1}\cap
W_{\lambda_1}\cdots\mathcal{D}_{\nu^{k-1} }\cap
W_{\lambda_{k-1} }$ and $\mathcal{D}'=\mathcal{D}_{\nu^{k+1} }\cap
W_{\lambda_{k+1} } \cdots\mathcal{D}_{\nu^{m+n}}\cap
W_{\lambda_{m+n}}.$
Moreover,
\begin{equation}\label{break}
\mathcal{D}_{\tau^k}\cap
W_{\lambda_k}=(\mathcal{D}_{\tau^k}\cap
W_{\nu^k})\times(\mathcal{D}_{\nu^k}\cap
W_{\lambda_k})=(\mathcal{D}_{\tau^k}\cap
W_{{\sigma^k}})\times(\mathcal{D}_{{\sigma^k}}\cap
W_{\lambda_k}),
\end{equation}
where
\begin{equation*}
\aligned
\mathcal{D}_{\tau^k}\cap
W_{\nu^k}&=\{1,s_{\tilde a+1},s_{\tilde a+1}s_{\tilde a+2},\cdots,s_{\tilde a+1}\cdots
s_{\tilde a+b-1}\}\text{ and }\\
\mathcal{D}_{\tau^k}\cap
W_{{\sigma^k}}&=\{1,s_{\tilde a},s_{\tilde a}s_{\tilde a-1},\cdots,s_{\tilde a}\cdots
s_{\tilde a'+1}\}.
\endaligned
\end{equation*}

We now complete our computation in two cases.

\textbf{Case 1.  $\boldsymbol{a_{h+1,k}\geq 1}$ and $\boldsymbol{\hat{k}=0}$.} In this case, $\mathcal{D}_{\nu^k }\cap
W_{\lambda_k }\subseteq W_{\la^{(0)}}$. So $x=x_0$ for all $x\in \mathcal{D}_{\nu^k }\cap W_{\lambda_k }$ (or $x_1=1$).

If $h<m$, by the decomposition \eqref{DD'} for $\iota=\nu$, we have
\begin{equation*}
\begin{aligned}\Upsilon'&=\sum_{w\in\mathcal{D}_{\nu}\cap
W_{\la}}(-\bsq^{-1})^{\ell(w_1)} \sum_{x\in}v^A(v_h
v_{h+1}^{a_{h+1,k}-1}+\cdots+v_{h+1}^{a_{h+1,k}-1}  v_h)\cdot T_w\\
&=\sum_{d\in\sD,d'\in\sD'}(-\bsq^{-1})^{\ell(d_1d'_1)} \sum_{x\in\mathcal{D}_{\nu^k }\cap
W_{\la_k  }}v^A(v_h
v_{h+1}^{a_{h+1,k}-1}+\cdots+v_{h+1}^{a_{h+1,k}-1}  v_h)\cdot T_xT_{dd'}\\
\end{aligned}
\end{equation*}
However,
\begin{equation*}
\begin{aligned}
&\sum_{x\in\mathcal{D}_{\nu^k }\cap
W_{\la_k  }}v^A(v_h
v_{h+1}^{a_{h+1,k}-1}+\cdots+v_{h+1}^{a_{h+1,k}-1}  v_h)\cdot T_x\\
&=\sum_{\substack{y\in\mathcal{D}_{\tau^k }\cap
W_{\nu^k }\\ y'\in\mathcal{D}_{\nu^k }\cap
W_{\la_k  }}}\!\!(\cdots v^{a_{1,k}}_1 \cdots v_{h-1}^{a_{h-1,k}}
(v^{a_{h,k}+1}_h  v_{h+1}^{a_{h+1,k}-1})
v^{a_{h+2,k}}_{h+2} \cdots
v^{a_{m+n,k}}_{m+n}\cdots) \cdot T_yT_{y'}\\
&=\sum_{\substack{z\in\mathcal{D}_{\tau^k }\cap
W_{{\sigma^k} }\\ z'\in\mathcal{D}_{{\sigma}^k }\cap
W_{\la_k }}}(\cdots v^{a_{1,k}}_1 \cdots
v^{a+1}_h  v^{a_{h+1,k}-1}_{h+1}
v^{a_{h+2,k}}_{h+2} \cdots
v^{a_{m+n,k}}_{m+n}\cdots )\cdot T_{zz'}\,\;\;(a=a_{h,k})\\
&=[\![a+1]\!]\sum_{z'\in\mathcal{D}_{{\sigma}^k }\cap
W_{\la_k }}(\cdots v^{a_{1,k}}_1 \cdots
v^{a+1}_h  v^{a_{h+1,k}-1}_{h+1}
v^{a_{h+2,k}}_{h+2} \cdots
v^{a_{m+n,k}}_{m+n}\cdots )\cdot T_{z'}
\end{aligned}
\end{equation*}
(Here, and in the sequel, only the part associated with column $k$ is displayed. Other parts unchanged from $v^A$ are omitted.)
By \eqref{break}, substituting gives
$$\Upsilon'=[\![a_{hk}+1]\!]\sum_{z'\in\mathcal{D}_{{\sigma} }\cap
W_{\la  }}(-\bsq^{-1})^{\ell(z'_1)}v_\sigma\cdot T_{z'}=[\![a_{hk}+1]\!](v_\la)N_{(A+E_{h,k}-E_{h+1,k})'}.
$$

Assume now $h\geq m$, then $b=a_{h+1,k}=1$ and so $W_\nu=W_\tau$ as seen in \eqref{MM'}(1)
and $\sum_{i=1}^{a_{m+1,m}}(-\bsq^{-1})^{i-1}v_{m+1} ^{i-1}  v_m
v_{m+1}^{a_{m+1,k}-i}=v_m$.

 If $h=m$, by a similar argument as above. we have
\begin{equation*}
\begin{aligned}
\Upsilon'=&\sum_{x\in\mathcal{D}_{\nu }\cap W_{\lambda}}(-\bsq^{-1})^{\ell(x_1)}
v^A(\sum_{i=1}^{a_{m+1,m}}(-\bsq^{-1})^{i-1}v_{m+1} ^{i-1}  v_m
v_{m+1}^{a_{m+1,k}-i})
\cdot T_x\\
&=\sum_{x\in\mathcal{D}_{\tau }\cap
W_{\lambda}}(-\bsq^{-1})^{\ell(x_1)}(\cdots v_1^{a_{1k}} \cdots  v_m^{a_{m,k}+1}
v_{m+2}^{a_{m+2,k}} \cdots  v_{m+n}^{a_{m+n,k}}\cdots)\cdot T_x\\
&=[\![a_{m,k}+1]\!]\sum_{z\in\mathcal{D}_{{\sigma} }\cap
W_{\lambda}}(-\bsq^{-1})^{\ell(z_1)}v_\sigma\cdot T_z.
\end{aligned}
\end{equation*}

If $h>m$, then $a=a_{h,k}=0,1$. For $a=0$, by \eqref{MM'}, $W_\nu=W_\sigma=W_\tau$ and so
$$\aligned\Upsilon'&=\sum_{x\in\mathcal{D}_{\nu }\cap W_{\lambda}}
(-\bsq^{-1})^{\ell(x_1)}(\cdots v_1^{a_{1k}} \cdots  v_h
v_{h+2}^{a_{h+2,k}} \cdots  v_{m+n}^{a_{m+n,k}}\cdots)
\cdot T_x\\
&=[\![a_{h,k}+1]\!]\sum_{x\in\mathcal{D}_{\sigma }\cap W_{\lambda}}(-\bsq^{-1})^{\ell(x_1)}
v_\sigma\cdot T_x.\endaligned$$
For $a=1$,  $W_\nu=W_\tau$, $\sD_\tau\cap W_\sigma=\{1,s_{\tilde a}\}$ and $A+E_{h,k}-E_{h+1,k}\not\in M(m|n,r)$. But
$$\aligned
\Upsilon'&=\sum_{x\in\mathcal{D}_{\nu }\cap W_{\lambda}}
(-\bsq^{-1})^{\ell(x_1)}(\cdots v_1^{a_{1k}} \cdots  v_h^2
v_{h+2}^{a_{h+2,k}} \cdots  v_{m+n}^{a_{m+n,k}}\cdots)
\cdot T_x\\
&=\sum_{z'\in\mathcal{D}_{\sigma }\cap W_{\lambda}}(-\bsq^{-1})^{\ell(z'_1)}
\sum_{z\in\sD_\tau\cap W_\sigma}(\cdots v_1^{a_{1k}} \cdots  v_h^2
v_{h+2}^{a_{h+2,k}} \cdots  v_{m+n}^{a_{m+n,k}}\cdots)
\cdot T_zT_{z'}=0.\endaligned$$

{\bf Case 2. $\boldsymbol{a_{h+1,k}\geq1}$ and $\boldsymbol{\hat{k}=1}$.} In this case, $\mathcal{D}_{\nu^k}\cap W_{\la_k}\subseteq W_{\la^{(1)}}$. So
 $x=x_1$ for $x\in\mathcal{D}_{\nu^k }\cap W_{\la_k}$.

If $h<m$ then $b=a_{h+1,k}=1$ and $a=a_{h,k}\in\{0,1\}$. Repeatedly applying \eqref{MM'} yields
$$\aligned
\Upsilon'&=\sum_{x\in\mathcal{D}_{\tau }\cap
W_{\la  }}(-\bsq^{-1})^{\ell(x_1)}(\cdots v_1^{a_{1k}} \cdots
v_h^{a+1}  v_{h+2}^{a_{h+2,k}} \cdots
v_{m+n}^{a_{m+n,k}}\cdots)\cdot T_x\\
&=
\begin{cases}
0,&\text{ if }a=1;\\
\sum_{x\in\mathcal{D}_{{\sigma} }\cap
W_{\la  }}(-\bsq^{-1})^{\ell(x_1)}v_\sigma\cdot T_x,&\text{ if }a=0.
\end{cases}
\endaligned$$

Assume now $h=m$. Then $a=a_{h,k}\in\{0,1\}$ (but $a_{h+1,k}\geq1$). By the first formula of \eqref{T action},
$$\Upsilon'=\sum_{x\in \sD_\tau\cap W_\la}(-\bsq^{-1})^{\ell(x_1)}(\cdots v_1^{a_{1k}} \cdots
v_h^{a+1}v_{h+1}^{a_{h+1,k}}  v_{h+2}^{a_{h+2,k}} \cdots
v_{m+n}^{a_{m+n,k}}\cdots)\cdot T_x.$$
By \eqref{break} and \eqref{MM'}(3), a similar argument shows that
$$\Upsilon'=\begin{cases} 0,&\text{ if }a=1;\\
\sum_{x\in\sD_\sigma\cap W_\la}(-\bsq^{-1})^{\ell(x_1)}v_\sigma\cdot T_x,&\text{ if }a=0.\end{cases}$$

Finally, assume $h>m$. Then, by the decomposition \eqref{DD'},
\begin{equation*}
\begin{aligned}
\Upsilon'&=\sum_{w\in\mathcal{D}_{\nu}\cap
W_{\la}}(-\bsq^{-1})^{\ell(w_1)}[\cdots v_1^{a_{1,k}}  \cdots
v_h^{a_{h,k}}  (v_h
v_{h+1}^{a_{h+1,k}-1}\!\!+\bsq^{-1}v_{h+1}  v_h
v_{h+1}^{a_{h+1,k}-2}\\
&\quad\,\qquad\qquad\qquad\qquad\;\;+\cdots+(\bsq^{-1})^{a_{h+1,k}-1}v_{h+1}^{a_{h+1,k}-1}
v_h)
v_{h+2}^{a_{h+2,k}} \cdots  v_{m+n}^{a_{m+n,k}}\cdots]T_{w}\\
&=\sum_{d\in\sD,d'\in\sD'}(-\bsq^{-1})^{\ell(d_1d'_1)} \sum_{x\in\mathcal{D}_{\nu^k }\cap
W_{\la_k  }}(-\bsq^{-1})^{\ell(x)}[\cdots v_1^{a_{1,k}}  \cdots
v_h^{a_{h,k}}  (v_h
v_{h+1}^{a_{h+1,k}-1}+\!\!\\
&\quad\,\bsq^{-1}v_{h+1}  v_h
v_{h+1}^{a_{h+1,k}-2}+\cdots+(\bsq^{-1})^{a_{h+1,k}-1}v_{h+1}^{a_{h+1,k}-1}
v_h)
v_{h+2}^{a_{h+2,k}} \cdots  v_{m+n}^{a_{m+n,k}}\cdots]T_xT_{dd'}.\\
\end{aligned}
\end{equation*}
By \eqref{break} and repeatedly applying the third formula in \eqref{T action}, the inner summation becomes
\begin{equation*}
\begin{aligned}
&\sum_{\substack{y\in\mathcal{D}_{\tau^k }\cap
W_{\nu^k }\\x\in\mathcal{D}_{\nu^k }\cap
W_{\la_k }}}(-\bsq^{-1})^{\ell(yx)}(\cdots v_1^{a_{1,k}} \cdots
v_h^{a_{h,k}+1}  v_{h+1}^{a_{h+1}-1}
v_{h+2}^{a_{h+2,k}} \cdots  v_{m+n}^{a_{m+n,k}}\cdots)T_{yx}\\
&=\sum_{\substack{z\in\mathcal{D}_{\tau^k }\cap
W_{{\sigma}^k }\\z'\in\mathcal{D}_{{\sigma^k} }\cap
W_{\la_k }}}(-\bsq^{-1})^{\ell(zz')}(\cdots v_1^{a_{1,k}} \cdots
v_h^{a_{h,k}+1}  v_{h+1}^{a_{h+1}-1}
v_{h+2}^{a_{h+2,k}} \cdots  v_{m+n}^{a_{m+n,k}}\cdots)T_{zz'}\\
&=\sum_{z'\in\mathcal{D}_{{\sigma^k} }\cap
W_{\la_k }}\sum_{i=0}^{a_{h,k}}(\bsq^{-1})^i(-\bsq^{-1})^{\ell(z')}(\cdots v_1^{a_{1k}} \cdots
v_h^{a_{h,k}+1}  v_{h+1}^{a_{h+1}-1}
v_{h+2}^{a_{h+2,k}} \cdots  v_{m+n}^{a_{m+n,k}}\cdots)T_{z'}.\\
\end{aligned}
\end{equation*}
Hence, substituting gives
$$\Upsilon'=[\![a_{h,k}+1]\!]_{\bsq^{-1}}\sum_{z'\in\mathcal{D}_{{\sigma} }\cap
W_{\la }}(-\bsq^{-1})^{\ell(z_1')}v_\sigma\cdot T_{z'}.$$
So we have proved all cases for the first formula.

To prove the second formula, we simply reverse the roles of $h$ and $h+1$. Note that
$$\aligned
\mu&=(\rho_1,\ldots,\rho_{h-1},\rho_h-1,\rho_{h+1}+1,\rho_{h+2},\ldots),\\
\rho'&=(\rho_1,\ldots,\rho_{h-1},\rho_h-1,1,\rho_{h+1},\rho_{h+2},\ldots),\\
\endaligned
$$ and so
$$\mathcal{D}_{\rho'}\cap W_\rho=\begin{cases}\mathcal{D}_{\rho'}\cap W_{\rho^{(0)}},&\text{if }h\leq m;\\
\mathcal{D}_{\rho'}\cap W_{\rho^{(1)}},&\text{if }h> m.\end{cases}$$
Thus, $\Upsilon\cdot T_d$ can be computed similarly.
By noting
\begin{equation*}
\begin{aligned}
&{\nu}^{k}=(a_{1,k},a_{2,k},\ldots,\;\; a,\qquad b,\;\; a_{h+2,k},\ldots),\\
&{\sigma}^{k}=(a_{1,k},a_{2,k},\ldots,a-1,b+1,a_{h+2,k},\ldots),\\
&\tau^{k}=(a_{1,k},a_{2,k},\ldots,a-1,1,\;\;\,b,a_{h+2,k},\ldots),
\end{aligned}
\end{equation*}
where $a=a_{h,k}$ and $b=a_{h+1,k}$, the element $\Upsilon'$ is also computed similarly.
We leave the details to the reader.
\end{proof}

\begin{remark}\label{$N_{A'}=0$} As seen in Lemma \ref{main lemma}, we will make the convention $N_{A'}=0$ for any matrix $A\in M_{m+n}(\mathbb Z)$ but $A\notin M(m|n,r)$.
\end{remark}

\section{Some multiplication formulas in $\sS_\bsq(m|n,r)$}

We now use the key formulas given in Lemma \ref{main lemma} to derive more multiplication formulas which will be used to establish the realisation of the supergroup $\bfU(\mathfrak{gl}_{m|n})$.

For $\lambda=(\lambda_1,\cdots,\lambda_{m+n})\in \Lambda(m|n,r)$ and fixed $1\leq
h<m+n$, let
\begin{equation*}
\begin{aligned}
&B_p=B_p(h,\la)=\diag(\la+p\bse_h-(p+1)\bse_{h+1})+E_{h,h+1},\;\text{ resp.,}\\
&C_{p}=C_p(h,\la)=\diag(\la-(p+1)\bse_h+p\bse_{h+1})+E_{h+1,h},
\end{aligned}
\end{equation*}
 for all $0\leq p<\la_{h+1}$, resp., $0\leq p<\la_{h}$, and let
\begin{equation}\label{$U_p,L_p$}
\begin{aligned}
&U_p=U_p(h,\la)=\diag(\la-p\bse_{h+1})+pE_{h,h+1},\text{ resp.,}\\
&L_{p}=L_{p}(h,\la)=\diag(\la-p\bse_{h})+pE_{h+1,h},
\end{aligned}
\end{equation}
 for all $0\leq p\leq \la_{h+1}$, resp., $0\leq p\leq\la_{h}$.
Clearly, $\co(U_p)=\co(L_p)=\la$, $\ro(U_p)=\co(B_p)$ and $\ro(L_p)=\co(C_p)$. We need the following relations for a proof by induction.

\begin{lemma}\label{induction corollary}
Maintain the notation above and assume $1\leq h<m+n$ . We have
\begin{itemize}
\item[(1)]
$N_{U_p'}N_{B_p'}=[\![p+1]\!]_{\bsq_h}N_{U_{p+1}'}$ for all $p\geq 1$;
\item[(2)]
$N_{L_{p}'}N_{C_{p}'}= [\![p+1]\!]_{\bsq_h}N_{L_{p+1}'}$ for all $p\geq 1$.
\end{itemize}
(Note that, for $h=m$, the RHS in both cases is 0 and, for $p\geq\lambda_{h+1}$, resp.,
$p\geq\la_h$, we also have RHS$\,=0$ in (1), resp., in (2).)
\end{lemma}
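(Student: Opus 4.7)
The plan is to derive both identities as immediate specialisations of Lemma \ref{main lemma}, exploiting the extreme sparsity of $U_p$ and $L_p$: apart from the remaining diagonal, each is supported on the $2\times 2$ block at positions $(h,h), (h,h+1), (h+1,h), (h+1,h+1)$. This forces the sum on the right-hand side of Lemma \ref{main lemma} to collapse to a single term.

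For (1), I would apply Lemma \ref{main lemma}(1) with $A := U_p$. Since $\ro(U_p) = \la + p\bse_h - p\bse_{h+1}$, the matrix $\diag(\ro(U_p)) + E_{h,h+1} - E_{h+1,h+1}$ coincides with $B_p$, so the hypothesis matches. Row $h+1$ of $U_p$ has only one nonzero entry, $(U_p)_{h+1,h+1} = \la_{h+1} - p$, so the index condition $(U_p)_{h+1,k} \geq 1$ restricts to $k = h+1$ (and requires $p < \la_{h+1}$). For this surviving index, $U_p + E_{h,h+1} - E_{h+1,h+1} = U_{p+1}$ and $(U_p)_{h,h+1} = p$, producing the $\bsq$-integer $[\![p+1]\!]_{\bsq_h}$. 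It remains to verify that the prefactor $f_{h+1}(\bsq; U_p, h) = 1$: by \eqref{fqAh} this follows because both $\sum_{j>h+1}(U_p)_{h,j}$ and $\sum_{j<h+1}(U_p)_{h+1,j}$ vanish, since row $h$ of $U_p$ has no entries past column $h+1$ and row $h+1$ has no entries before column $h+1$.

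For the boundary cases, when $p \geq \la_{h+1}$ or when $h = m$ with $p \geq 1$, the matrix $U_{p+1}$ either has a negative diagonal entry or violates the even-odd trivial intersection property (its $(m,m+1)$-entry would exceed $1$), so $N_{U_{p+1}'} = 0$ by Remark \ref{$N_{A'}=0$}. The left-hand side likewise vanishes by the same collapse argument, confirming the stated conventions.

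For (2), the proof is symmetric: I would apply Lemma \ref{main lemma}(2) with $A := L_p$, using $\ro(L_p) = \la - p\bse_h + p\bse_{h+1}$ so that $\diag(\ro(L_p)) - E_{h,h} + E_{h+1,h} = C_p$. Row $h$ of $L_p$ has the sole nonzero entry $(L_p)_{h,h} = \la_h - p$, collapsing the sum to $k = h$. The resulting matrix $L_p - E_{h,h} + E_{h+1,h}$ is $L_{p+1}$, the factor $g_h(\bsq; L_p, h+1) = 1$ by \eqref{fqAh+1} via identical vanishing arguments, and the $\bsq$-integer is $[\![(L_p)_{h+1,h}+1]\!]_{\bsq_{h+1}} = [\![p+1]\!]_{\bsq_{h+1}}$, which equals $[\![p+1]\!]_{\bsq_h}$ whenever $h \neq m$ since then $\bsq_h = \bsq_{h+1}$. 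The boundary cases are handled exactly as in (1). The main obstacle is essentially absent, being pure bookkeeping; the only subtlety is the parity compatibility between $\bsq_h$ and $\bsq_{h+1}$ in part (2), which is automatic away from $h = m$ and irrelevant there because the right-hand side vanishes.
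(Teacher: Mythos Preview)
Your proposal is correct and follows essentially the same route as the paper: specialise Lemma~\ref{main lemma}(1) with $A=U_p$ (respectively Lemma~\ref{main lemma}(2) with $A=L_p$), observe that the sparsity of $U_p$ (resp.\ $L_p$) forces the sum to collapse to the single index $k=h+1$ (resp.\ $k=h$), and check that the prefactor is $1$. Your treatment is in fact more explicit than the paper's---you spell out the boundary cases and the $\bsq_h$ versus $\bsq_{h+1}$ reconciliation in part (2), both of which the paper leaves to the reader.
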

\begin{proof}Take $A$ to be $U_p$ in Lemma \ref{main lemma}(1). If
$a_{h+1,k}\geq 1$, then $k=h+1$ and $a_{h,h+1}=p$. Thus, $f(\bsq;A,h)=1$ for $h\neq m$ and, hence,
$N_{U_p'}N_{B_p'}=[\![p+1]\!]_{\bsq_h}N_{U_{p+1}'}$.
If $h=m$, then $U_{p+1}\notin M(m|n,r)$ and, hence,
$N_{U_{p+1}'}=0$. The second formula can be proved similarly.
\end{proof}

For $A\in M(m|n,r)$, set $\row_h(A)=(a_{h,1},a_{h,2},\cdots,a_{h,m+n})$ to be the $h$-th row of $A$.
Then $\row_h(A)\in\mathbb N^{m+n}$. For $\nu,\nu'\in\mathbb{N}^{m+n}$, set
$$\nu\leq \nu'\iff \nu_j\leq \nu_j'\text{ for all }j\in[1,m+n].$$
Like the polynomials defined in \eqref{fqAh} and \eqref{fqAh+1}, we define, for $A=(a_{i,j})\in M(m|n,r)$, $h\in[1,m+n)$, and $\nu \in\mathbb N^{m+n}$ with $|\nu|=p\geq 2$,
 \begin{equation}\label{fnuqAh}
f_\nu(\bsq;A,h)=\begin{cases}\bsq^{\sum_{j>t}a_{h,j}\nu_t},&\text{if }h<m;\\
0,&\text{if }h=m;\\
\bsq^{\sum_{l<t}\nu_l\nu_t}(\bsq^{-1})^{\sum_{j<t}a_{h+1,j}\nu_t},&\text{if }h>m,\end{cases}
\end{equation}
and
 \begin{equation}\label{fnuqAh+1}
g_\nu(\bsq;A,h+1)=\begin{cases}\bsq^{\sum_{j<t}a_{h+1,j}\nu_t},&\text{if }h<m;\\
0,&\text{if }h=m;\\
\bsq^{\sum_{l<t}\nu_l\nu_t}
(\bsq^{-1})^{\sum_{j>t}a_{h,j}\nu_t},&\text{if }h>m.\end{cases}
\end{equation}
Note that, if $|\nu|=1$, we may extend the definition by setting
$f_\nu(\bsq;A,h)=f_k(\bsq;A,h)$ and $g_\nu(\bsq;A,h+1)=g_k(\bsq;A,h+1)$ as in \eqref{fqAh} and \eqref{fqAh+1}, if $\nu_k=1$.

Now the multiplication formulas in Lemma \ref{main lemma}
can be generalised from $U_1=B$ and $L_1=C$ to
$U_p$ and $L_p$, respectively.

\begin{theorem}\label{U_i and L_i}
Let $A=(a_{i,j})\in M( m|n,r)$. Assume $1\leq
h<m+n$. Define $U_p=U_{p}(h,\ro(A)),L_p=L_{p}(h,\ro(A))\in M(m|n,r)$ as in \eqref{$U_p,L_p$}. The following
multiplication formulas hold in the $\bsq$-Schur superalgebra
$\sS_\bsq(m|n,r)$ over $\sA$:
\begin{equation*}
\begin{aligned}
(1)\quad &N_{A'}N_{U_p'}=\sum_{\substack{\nu\in\Lambda(m|n,p)\\\nu\leq
 \row_{h+1}(A)}}f_\nu(\bsq;A,h)\prod_{k=1}^{m+n}\left[\!\!\left[a_{h,k}+\nu_k\atop\nu_k\right]\!\!\right]_{\bsq_h}N_{(A+\sum_l\nu_l(E_{h,l}-E_{h+1,l}))'};\\
(2)\quad &N_{A'}N_{L_{p}'}=\sum_{\substack{\nu\in\Lambda(m|n,p)\\\nu\leq
 \row_{h}(A)}}g_\nu(\bsq;A,h+1)\prod_{k=1}^{m+n}\left[\!\!\left[a_{h+1,k}+\nu_k\atop\nu_k\right]\!\!\right]_{\bsq_{h+1}}N_{(A-\sum_l\nu_l(E_{h,l}-E_{h+1,l}))'}.
 \end{aligned}
 \end{equation*}
 Here, as before, $N_X=0$ for a matrix
 $X\not\in
 M(m|n,r)$.
\end{theorem}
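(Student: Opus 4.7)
\emph{Strategy.} The plan is to induct on $p \geq 1$, with the base case $p = 1$ being exactly Lemma~\ref{main lemma} (since $U_1 = B$ and $L_1 = C$). For the inductive step on formula (1), I would combine Lemma~\ref{induction corollary}(1) with associativity to obtain
$$[\![p+1]\!]_{\bsq_h}\, N_{A'} N_{U_{p+1}'} = N_{A'}\bigl( N_{U_p'} N_{B_p'} \bigr) = \bigl( N_{A'} N_{U_p'} \bigr) N_{B_p'},$$
where $B_p = B_p(h,\ro(A))$, and then expand the right-hand side by applying the induction hypothesis to the first factor followed by the base case Lemma~\ref{main lemma}(1) to each resulting summand.

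\emph{Reduction step.} The induction hypothesis rewrites $N_{A'}N_{U_p'}$ as a sum of $N_{(A+M_\nu)'}$ over $\nu \in \La(m|n,p)$ with $\nu \leq \row_{h+1}(A)$, where $M_\nu := \sum_l \nu_l(E_{h,l} - E_{h+1,l})$. A direct check shows $B_p = \diag(\ro(A+M_\nu)) + E_{h,h+1} - E_{h+1,h+1}$, so Lemma~\ref{main lemma}(1) applies to each $N_{(A+M_\nu)'}N_{B_p'}$ (with $A$ replaced by $A+M_\nu$), producing a sum over $k$ with $a_{h+1,k} - \nu_k \geq 1$. I would then reindex by $\nu' := \nu + \bse_k \in \La(m|n,p+1)$ and use the elementary $q$-binomial identity
$$[\![a + t]\!]_{\bsq_h}\left[\!\!\left[a + t - 1 \atop t - 1\right]\!\!\right]_{\bsq_h} = [\![t]\!]_{\bsq_h}\left[\!\!\left[a + t \atop t\right]\!\!\right]_{\bsq_h}$$
with $a = a_{h,k}$ and $t = \nu'_k$, so that the product of Gaussian binomials reassembles into $\prod_l \left[\!\!\left[a_{h,l}+\nu'_l \atop \nu'_l\right]\!\!\right]_{\bsq_h}$ times a scalar factor $[\![\nu'_k]\!]_{\bsq_h}$.

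\emph{Key identity.} The remaining task is to verify
$$\sum_{k:\,\nu'_k\geq 1} f_{\nu'-\bse_k}(\bsq;A,h)\, f_k(\bsq;A+M_{\nu'-\bse_k},h)\, [\![\nu'_k]\!]_{\bsq_h} = [\![p+1]\!]_{\bsq_h}\, f_{\nu'}(\bsq;A,h),$$
which I would check directly from \eqref{fnuqAh} and \eqref{fqAh} in each of the three cases $h<m$, $h=m$, $h>m$. The case $h=m$ is automatic, since $f_{\nu'} = 0$ for $|\nu'|=p+1\geq 2$ and, correspondingly, Lemma~\ref{induction corollary} already forces $N_{A'}N_{U_{p+1}'}=0$. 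In the other two cases the $f$-coefficients are pure $\bsq$-powers, so after tracking how $\row_h(A)$ and $\row_{h+1}(A)$ shift under $A \mapsto A + M_{\nu'-\bse_k}$, the identity collapses to a standard telescoping of the form $\sum_k \bsq^{\sigma_k}[\![\nu'_k]\!]_{\bsq_h} = [\![p+1]\!]_{\bsq_h}$ for suitable exponents $\sigma_k$. The hard part will be precisely this bookkeeping of $\bsq$-exponents, which is the only technical obstacle of the argument.

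\emph{Conclusion.} Since $\sS_\bsq(m|n,r)$ is $\sA$-free by Lemma~\ref{norm basis}(1) and $[\![p+1]\!]_{\bsq_h}$ is a nonzero element of the polynomial ring $\sA$, hence a non-zero-divisor, I can cancel $[\![p+1]\!]_{\bsq_h}$ on both sides to obtain formula (1) for $p+1$. Formula (2) follows by the same induction using Lemma~\ref{induction corollary}(2): exchanging the roles of $h$ and $h+1$ throughout and replacing $f_\nu, f_k$ by $g_\nu, g_k$ yields an analogous combinatorial identity whose verification is entirely parallel.
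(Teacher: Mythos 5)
Your proposal is correct and follows essentially the same route as the paper: induction on $p$ via Lemma \ref{induction corollary} and associativity, reindexing the double sum by $\eta=\nu+\bse_s$, the $q$-binomial identity $[\![a+t]\!]\bigl[\!\!\bigl[{a+t-1\atop t-1}\bigr]\!\!\bigr]=[\![t]\!]\bigl[\!\!\bigl[{a+t\atop t}\bigr]\!\!\bigr]$, and the telescoping identity $\sum_s\bsq^{\sum_{j>s}\eta_j}[\![\eta_s]\!]=[\![p+1]\!]$ (with the analogous exponent bookkeeping for $h>m$), together with the same disposal of the $h=m$ case. The only cosmetic difference is that you justify cancelling $[\![p+1]\!]_{\bsq_h}$ via freeness of $\sS_\bsq(m|n,r)$ rather than dividing outright, which is a harmless refinement.
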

\begin{proof} We only prove (1). The proof of (2) is similar.

By the definition of the coefficients $f_\nu(\bsq;A,h)$, (1) is true for $p=1$. Assume now $p>1$
and $h=m$, then $f_\nu(\bsq;A,h)=0$ for all $\nu$ and $U_p\notin M(m|n,r)$. So $N_{U_p}=0$ by the convention
 in Remark \ref{$N_{A'}=0$}. Hence,
$N_{A'}N_{U_p'}=0$, proving the formula in this case. We now assume $h\neq m$ and let
$\bse_k=(0,\ldots,0,\underset{(k)}1,0\ldots)$; see \eqref{bse}. The proof is parallel to that in \cite{BLM} by
applying induction on $p$.

By Corollary \ref{induction corollary},
 $N_{U_p'}N_{B_p'}=[\![p+1]\!]_{\bsq_h} N_{U_{p+1}'}$. We assume $U_{p+1}\in M(m|n,r)$ (so $p+1\leq
 \sum_{k=1}^{m+n}a_{h+1,k}$).  Then
 \begin{equation}\label{$AU$}
\begin{aligned}
&N_{A'}N_{U_{p+1}'}=\frac1{[\![p+1]\!]_{\bsq_h}}N_{A'}N_{U_p'}N_{B_p'}\\
&=\frac1{[\![p+1]\!]_{\bsq_h}}\biggl(\sum_{\substack{\nu\in\Lambda(m+n,p)\\\nu\leq
\row_{h+1}(A)}}f_\nu(\bsq;A,h)\prod_{k=1}^{m+n}\left[\!\!\left[a_{h,k}+\nu_k\atop
\nu_k\right]\!\!\right]_{\bsq_h} N_{A'+\sum_l\nu_l(E_{h,l}-E_{h+1,l})'}\biggr)N_{B_p'}.
\end{aligned}
\end{equation}
Putting $A_\nu=A+\sum_l\nu_l(E_{h,l}-E_{h+1,l})$, we have, by Lemma \ref{main lemma}(1),
\begin{equation}\label{$AB$}
N_{A_\nu'}N_{B_p'}=\sum_{\substack{s\in[1,m+n]\\a_{h+1,s}-\nu_s\geq
1}}f_s(\bsq;A_\nu,h)[\![a_{h,s}+\nu_s+1]\!]_{\bsq_h}
N_{(A_\nu+E_{h,s}-E_{h+1,s})'}.
\end{equation}

For fixed $\eta\in\Lambda(m+n,p+1)$, if
$A+\sum_l\eta_l(E_{h,l}-E_{h+1,l})\in M(m|n,r)$, then
$\eta\leq\row_{h+1}(A)$. By inserting
\eqref{$AB$} into \eqref{$AU$}, we see that the coefficient $f'_\eta(\bsq;A,h)$ of
$N_{A_\eta'}=N_{(A+\sum_l\eta_l(E_{h,l}-E_{h+1,l}))'}$ is a sum over pairs $(s,\nu)$ with $\eta=\nu+\bse_s$. Hence, $f'_\eta(\bsq;A,h)$
 is equal to
\begin{equation*}
\begin{aligned}
\frac1{[\![p+1]\!]_{\bsq_h}}\!\!\!\!\sum_{\substack{s\in[1,m+n]\\\eta-\bse_s\in\Lambda(m+n,p)}}\!\!\!\!
f_{\eta-\bse_s}(\bsq;A,h)\prod_{k=1}^{m+n}
\left[\!\!\left[a_{h,k}+(\eta-\bse_s)_k\atop
(\eta-\bse_s)_k\right]\!\!\right]_{\bsq_h}f_s(\bsq;A_{\eta-\bse_s},h)
[\![a_{h,s}+\eta_s]\!]_{\bsq_h}.
\end{aligned}
\end{equation*}

If $h<m$, then $\bsq_h=\bsq$, $f_s(\bsq;A_{\eta-\bse_s},h)=\bsq^{\sum_{j>s}a_{h,j}+(\eta-\bse_s)_j}$,
 and $f_{\eta-\bse_s}(\bsq;A,h)=\bsq^{\sum_{j>t}a_{h,j}(\eta-\bse_s)_t}$. Since
\begin{equation*}
\begin{aligned}
&\prod_{k=1}^{m+n}
\left[\!\!\left[a_{h,k}+(\eta-\bse_s)_k\atop(\eta-\bse_s)_k\right]\!\!\right]\cdot [\![a_{h,s}+\eta_s]\!]
=\prod_{k=1}^{m+n}\left[\!\!\left[a_{h,k}+\eta_k\atop\eta_k\right]\!\!\right]\cdot[\![\eta_s]\!],\\
\sum_{j>t}a_{h,j}(\eta&-\bse_s)_t+\sum_{j>s}(a_{h,j}+(\eta-\bse_s)_j)=\sum_{j>t}a_{h,j}\eta_t-\sum_{j>s}a_{h,j}+\sum_{j>s}a_{h,j}+\sum_{j>s}\eta_j\\
&=\sum_{j>t}a_{h,j}\eta_t+\sum_{j>s}\eta_j,
\end{aligned}
\end{equation*}
and
$$[\![p+1]\!]=\sum_{\substack{s\in[1,m+n]\\\eta-e_s\in\Lambda(m+n,p)}}
\bsq^{\sum_{j>s}\eta_j}[\![\eta_s]\!],$$
it follows that
$$\aligned
f'_\eta(\bsq;A,h)&=\frac1{[\![p+1]\!]}\bsq^{\sum_{j>t}a_{h,j}\eta_t}\prod_{k=1}^{m+n}\left[\!\!\left[a_{h,k}+\eta_k\atop
\eta_k\right]\!\!\right]\sum_{\substack{s\in[1,m+n]\\\eta-e_s\in\Lambda(m+n,p)}}
\bsq^{\sum_{j>s}\eta_j}[\![\eta_s]\!]\\
&=\bsq^{\sum_{j>t}a_{h,j}\eta_t}\prod_{k=1}^{m+n}\left[\!\!\left[a_{h,k}+\eta_k\atop
\eta_k\right]\!\!\right]=f_\eta(\bsq;A,h)\prod_{k=1}^{m+n}\left[\!\!\left[a_{h,k}+\eta_k\atop
\eta_k\right]\!\!\right],\endaligned
$$
as desired in this case.

If $h>m$, then $\bsq_h=\bsq^{-1}$. We first observe that $\bsq^{\frac{1}{2}p(p-1)}=\frac{[\![p]\!]^!}{[\![p]\!]^!_{{\bsq^{-1}}}}$ and
$\frac12p(p-1)-\sum_{k=1}^{m+n}\frac12\nu_k(\nu_k-1)=\sum_{l<t}\nu_l\nu_t$.
It follows that
$$\aligned
\frac{[\![p]\!]^!}{[\![p]\!]^!_{{\bsq^{-1}}}}(\bsq^{-1})^{\sum_{j<t}a_{h+1,j}\nu_t}\prod_{k=1}^{m+n}
 \frac{[\![a_{h,k}+\nu_k]\!]^!_{{\bsq^{-1}}}}{[\![\nu_k]\!]^![\![a_{h,k}]\!]^!_{\bsq^{-1}}}&=
 f_\nu(\bsq;A,h)\prod_{k=1}^{m+n}\left[\!\!\left[a_{h,k}+\nu_k\atop
\nu_k\right]\!\!\right]_{\bsq^{-1}}.\endaligned
 $$
 In this case, $f_s(\bsq;A_{\eta-\bse_s},h)=(\bsq^{-1})^{\sum_{j<s}a_{h+1,j}-(\eta-\bse_s)_j}$. Thus,
\begin{equation*}
\begin{aligned}
f'_{\eta}(\bsq;A,h)&=\frac{[\![p]\!]^!}{[\![p+1]\!]^!_{\bsq^{-1}}}\!\!\!\sum_{\substack{s\in[1,m+n],\\
\eta-\bse_s\in\Lambda(m+n,p)}}(\bsq^{-1})^{\sum_{j<t}a_{h+1,j}(\eta-\bse_s)_t}\prod_{k=1}^{m+n}
\frac{[\![a_{h,k}+(\eta-\bse_s)_k]\!]_{\bsq^{-1}}}{[\![(\eta-\bse_s)_k]\!]^![\![a_{h,k}]\!]_{\bsq^{-1}}}\cdot\\
&\qquad\qquad\cdot(\bsq^{-1})^{\sum_{j<s}a_{h+1,j}-(\eta-\bse_s)_j}[\![a_{h,s}+\eta_s]\!]_{\bsq^{-1}}.
\end{aligned}
\end{equation*}
 Since $$\aligned&\prod_{k=1}^{m+n}
\frac{[\![a_{h,k}+(\eta-\bse_s)_k]\!]^!_{\bsq^{-1}}}
{[\![(\eta-\bse_s)_k]\!]^![\![a_{h,k}]\!]_{\bsq^{-1}}}\cdot {[\![a_{h,s}+\eta_s]\!]_{\bsq^{-1}}}=\prod_{k=1}^{m+n}
\frac{[\![a_{h,k}+\eta_k]\!]^!_{\bsq^{-1}}}{[\![\eta_k]\!]^![\![a_{h,k}]\!]^!_{\bsq^{-1}}}
\cdot{[\![\eta_s]\!]},
\endaligned$$ and since
\begin{equation*}
\begin{aligned}
&\sum_{j<t}a_{h+1,j}(\eta-\bse_s)_t+\sum_{j<s}(a_{h+1,j}-(\eta-\bse_s)_j)\\
&=\sum_{j<t}a_{h+1,j}\eta_t-\sum_{j<s}a_{h+1,j}+
\sum_{j<s}a_{h+1,j}-\sum_{j<s}\eta_j\\
&=\sum_{j<t}a_{h+1,j}\eta_t-\sum_{j<s}\eta_{j},
\end{aligned}
\end{equation*}
it follows that
$$
\aligned
f'_{\eta}(\bsq;A,h)&=
\frac{[\![p]\!]!}{[\![p+1]\!]^!_{\bsq^{-1}}}(\bsq^{-1})^{\sum_{j<t}a_{h+1,j}\eta_t}
\prod_{k=1}^{m+n}\frac{[\![a_{h,k}+\eta_k]\!]_{\bsq^{-1}}}{[\![\eta_k]\!]^![\![a_{h,k}]\!]_{\bsq^{-1}}}
\!\!\!\sum_{\substack{s\in[1,m+n],\\\eta-\bse_s\in\Lambda(m+n,p)}}
\bsq^{\sum_{j<s}\eta_{j}}[\![\eta_s]\!]\\
&=\frac{[\![p+1]\!]^!}{[\![p+1]\!]^!_{\bsq^{-1}}}(\bsq^{-1})^{\sum_{j<t}a_{h+1,j}\eta_t}\prod_{k=1}^{m+n}
\frac{[\![a_{h,k}+\eta_k]\!]^!_{\bsq^{-1}}}{[\![\eta_k]\!]^![\![a_{h,k}]\!]^!_{\bsq^{-1}}}\\
&=f_\eta(\bsq;A,h)\prod_{k=1}^{m+n}\left[\!\!\left[a_{h,k}+\eta_k\atop\eta_k\right]\!\!\right]_{\bsq^{-1}},
\endaligned$$
as desired.
\end{proof}

\section{Normalised multiplication formulas in $\sS_\up(m|n,r)$}
The multiplication formulas discovered in the last section are derived through a $T$-basis action on the tensor superspace. It is somewhat symmetric relative to $\up^2$ and $-1$, the eigenvalues of $T_s$.
For example, the $h=m$ case in \eqref{fqAh} and \eqref{fqAh+1} reflects such symmetry.
In order to obtain a realisation for quantum super $\mathfrak {gl}_{m|n}$, we need to recover a  symmetry relative to $\up$ and $-\up^{-1}$, the eigenvalues of the normalised generators $\up^{-1}T_s$. Thus, we need to normalise the basis $\{N_{A'}\}_A$ and twist the right action to a left action via the anti-involution $\tau$ given in Lemma \ref{norm basis}(2).

For $A=(a_{i,j})\in M(m|n,r)$, define
\begin{equation}\label{d(A)}
d(A)=\sum_{i>k,j<l}a_{i,j}a_{k,l}+\sum_{j<l}(-1)^{\hat{i}}a_{i,j}a_{i,l},\qquad\xib_A=\bsq^{-d(A)}\tau(N_{A'}).
\end{equation}

\begin{remark}
Actually, if $A$ is corresponding to the triple
$(\lambda|\mu,d,\xi|\eta)$ where $\lambda|\mu, \xi|\eta\in
\Lambda(m|n,r)$ and $d\in\mathcal{D}^\circ_{\lambda|\mu,\xi|\eta}$,
using the notation in \cite[\S6]{DR},
$d(A)=\ell(d^*)-\ell(^*d)+\ell(d)-\ell(w_{0,\xi})+\ell(w_{0,\eta})$.  \end{remark}

Recall $\bsq=\up^2$ and $\up_h=\up^{(-1)^{\hat{h}}}$. The multiplication formulas in
Theorem \ref{U_i and L_i} turn out to be quite symmetric if we use the normalised basis
$\{\xib_A\}_{A\in M(m|n,r)}$ for $\sS_\up(m|n,r)$. We first look at the even generator case.

\begin{proposition}[\bf The $\boldsymbol{h\neq m}$ case]\label{integral basis multiplication}
Let $A=(a_{i,j})\in M( m|n,r)$. Assume $1\leq
h<m+n$ and $h\neq m$. Define $U_p=U_{p}(h,\ro(A)),L_p=L_{p}(h,\ro(A))\in M(m|n,r)$ as in \eqref{$U_p,L_p$}. The
following multiplication formulas hold in the $\up$-Schur
superalgebra $\mathcal{S}_\up(m|n,r)$ over $\mathcal{Z}$.
\begin{itemize}
\item[(1)] $\xib_{U_p}\xib_A=\sum_{\substack{\nu\in\Lambda(m|n,p)\\\nu\leq
 row_{h+1}(A)}}\up_h^{f_h(\nu,A)}\prod_{k=1}^{m+n}\overline{\left[\!\!\left[a_{h,k}+\nu_k\atop\nu_k\right]\!\!\right]}_{\up_h^2}
 \xib_{A+\sum_l\nu_l(E_{h,l}-E_{h+1,l})},$\\
 where
 \begin{equation}\label{beta_h1}
 f_h(\nu,A)=\sum_{j\geq
 t}a_{h,j}\nu_t-\sum_{j>t}a_{h+1,j}\nu_t+\sum_{t<t'}\nu_t\nu_{t'}.\end{equation}

\item[(2)]$\xib_{L_p}\xib_A=\sum_{\substack{\nu\in\Lambda(m|n,p)\\\nu\leq
 row_{h}(A)}}\up_{h+1}^{f'_h(\nu,A)}\prod_{k=1}^{m+n}\overline{\left[\!\!\left[a_{h+1,k}+\nu_k\atop\nu_k\right]\!\!\right]}_{\up_{h+1}^2}
 \xib_{A-\sum_l\nu_l(E_{h,l}-E_{h+1,l})},$\\
 where
 \begin{equation}\label{gamma_h1}
 f'_h(\nu,A)=\sum_{j\leq
 t}a_{h+1,j}\nu_t-\sum_{j<t}a_{h,j}\nu_t+\sum_{t<t'}\nu_t\nu_{t'}.\end{equation}
\end{itemize}
 \end{proposition}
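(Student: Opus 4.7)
The plan is to derive the normalised formulas directly from Theorem \ref{U_i and L_i} by applying the anti-involution $\tau$ of Lemma \ref{norm basis}(2) and then rescaling via $N_X = \bsq^{d(X)}\xib_X$. I treat (1) in detail; (2) is completely analogous with the roles of rows $h$ and $h+1$ reversed.

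Starting from
$$N_{A'}N_{U_p'}=\sum_{\nu}f_\nu(\bsq;A,h)\prod_{k}\left[\!\!\left[a_{h,k}+\nu_k\atop\nu_k\right]\!\!\right]_{\bsq_h}N_{A_\nu'}$$
(with $A_\nu=A+\sum_l\nu_l(E_{h,l}-E_{h+1,l})$), I apply $\tau$. Because $h\neq m$, the rows $h$ and $h+1$ belong to the same (even or odd) block, so the off-diagonal entries of $U_p$ stay inside $A_{11}$ or inside $A_{22}$; thus $\hat U_p=0$ and $\tau$ acts as an ordinary anti-automorphism on this product. Using $\tau(N_{X'})=N_X$, this yields
$$N_{U_p}N_A=\sum_{\nu}f_\nu(\bsq;A,h)\prod_{k}\left[\!\!\left[a_{h,k}+\nu_k\atop\nu_k\right]\!\!\right]_{\bsq_h}N_{A_\nu}.$$
Substituting $N_X=\bsq^{d(X)}\xib_X$ on both sides produces the desired expansion of $\xib_{U_p}\xib_A$, with the coefficient of $\xib_{A_\nu}$ equal to
$$\bsq^{\,d(A_\nu)-d(U_p)-d(A)}\cdot f_\nu(\bsq;A,h)\cdot\prod_k\left[\!\!\left[a_{h,k}+\nu_k\atop\nu_k\right]\!\!\right]_{\bsq_h}.$$

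The remaining work is arithmetic. First, I use $\left[\!\!\left[t\atop s\right]\!\!\right]=\up^{s(t-s)}\left[t\atop s\right]$ (with parameter $\bsq_h=\up_h^{2}$) to rewrite
$$\prod_k\left[\!\!\left[a_{h,k}+\nu_k\atop\nu_k\right]\!\!\right]_{\bsq_h}=\up_h^{\sum_k\nu_k a_{h,k}}\prod_k\overline{\left[\!\!\left[a_{h,k}+\nu_k\atop\nu_k\right]\!\!\right]}_{\up_h^2}.$$
Second, I compute $\Delta(\nu):=d(A_\nu)-d(A)-d(U_p)$ using the explicit formula \eqref{d(A)}. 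Since $A_\nu$ differs from $A$ only along rows $h$ and $h+1$, which for $h\neq m$ share the common parity $\hat h$, the odd piece $\sum_{j<l}(-1)^{\hat i}a_{i,j}a_{i,l}$ contributes equal-sign terms that combine cleanly; the even piece $\sum_{i>k,j<l}a_{i,j}a_{k,l}$ contributes a sum of $a_{*,*}\nu_*$-terms. The matrix $U_p$ is diagonal off one entry, so $d(U_p)$ is also easy to evaluate explicitly. Adding these three contributions (the explicit exponent in $f_\nu(\bsq;A,h)$, the rescaling exponent $\Delta(\nu)$, and the symmetrisation exponent $\sum_k\nu_k a_{h,k}$), one recovers $\up_h^{f_h(\nu,A)}$ with
$$f_h(\nu,A)=\sum_{j\geq t}a_{h,j}\nu_t-\sum_{j>t}a_{h+1,j}\nu_t+\sum_{t<t'}\nu_t\nu_{t'}.$$

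The hard part will be Step 4: organising the cancellation of $\bsq$-exponents coming from three different places while keeping the two sub-cases $h<m$ and $h>m$ in view. The case split inside $f_\nu(\bsq;A,h)$ (given by \eqref{fnuqAh}) must disappear after folding in $\Delta(\nu)$ and the symmetrisation factor, producing the uniform formula for $f_h(\nu,A)$. For $h>m$ this is delicate because $\bsq_h=\up^{-2}$, and the extra $\bsq^{\sum_{l<t}\nu_l\nu_t}$ already present in $f_\nu(\bsq;A,h)$ is precisely what matches the $\sum_{t<t'}\nu_t\nu_{t'}$ summand of $f_h(\nu,A)$ after the sign is absorbed into $\up_h$. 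Part (2) follows from Theorem \ref{U_i and L_i}(2) by the same procedure, with $U_p$ replaced by $L_p$ and the indices $h,h+1$ swapped; since $\hat L_p=0$ when $h\neq m$, the anti-automorphism $\tau$ again introduces no extra sign, and the bookkeeping of exponents uses $g_\nu(\bsq;A,h+1)$ in place of $f_\nu(\bsq;A,h)$ to yield $\up_{h+1}^{f'_h(\nu,A)}$.
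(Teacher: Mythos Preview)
Your approach is essentially identical to the paper's: start from Theorem \ref{U_i and L_i}, apply $\tau$ (using $\hat U_p=0$ for $h\neq m$ to avoid sign issues), substitute $N_X=\bsq^{d(X)}\xib_X$, and reduce the claim to verifying an exponent identity involving $d(A_\nu)-d(A)-d(U_p)$; the paper then carries out that exponent computation in full for the cases $h<m$ and $h>m$ separately (your sketched ``Step 4''). One small slip: your Gaussian conversion should read $\prod_k\left[\!\!\left[a_{h,k}+\nu_k\atop\nu_k\right]\!\!\right]_{\bsq_h}=\up_h^{2\sum_k\nu_k a_{h,k}}\prod_k\overline{\left[\!\!\left[a_{h,k}+\nu_k\atop\nu_k\right]\!\!\right]}_{\up_h^2}$, with a factor $2$ in the exponent (since $\bsq_h=\up_h^2$); this is exactly the version the paper uses and is needed for the final bookkeeping to close.
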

 Note that $\up_h=\up_{h+1}$ if $h\neq m$. Note also that, in this $h\neq m$ case, these formulas are the super version of the formulas (a2) and (b2) in \cite[Lem.~3.4]{BLM}. Formerly, they can be obtained by replacing $\up$ there by $\up_h$.

\begin{proof}
We just give the proof of (1). The proof of (2) is
similar.

For $\nu\in\Lambda(m|n,p)$ with $\nu\leq \row_{h+1}(A)$, set
$X=A+\sum_l\nu_l(E_{h,l}-E_{h+1,l})$. Since
$$\left[\!\!\left[N\atop s\right]\!\!\right]_{\up^2}=\up^{2s(N-s)}\overline{\left[\!\!\left[N\atop
s\right]\!\!\right]}_{\up^2},\quad\text{ for all }\quad N\geq s\geq0,$$
it follows that
$$\prod_{k=1}^{m+n}\left[\!\!\left[a_{h,k}+\nu_k\atop
\nu_k\right]\!\!\right]_{\up^2}=\up^{2\sum_{k=1}^{m+n}a_{hk}\nu_k}\prod_{k=1}^{m+n}\overline{\left[\!\!\left[a_{h,k}+\nu_k\atop
\nu_k\right]\!\!\right]}_{\up^2}.$$
By a comparison with Theorem \ref{U_i and L_i}(1),
 the proof of (1) is equivalent to the identity.
\begin{equation}\label{goal}
\up^{d(X)-d(A)-d(U_p)}f_\nu(\bsq;A,h)\up^{2\sum_{k=1}^{m+n}a_{hk}\nu_k}=
\up_h^{f_h(A,\nu)}.
\end{equation}

Let
$$\aligned
J&=\{(i,j,k,l)\mid 1\leq i,j,k,l\leq m+n, i>k,j<l\},\\
 I&=\{(i,j,k,l)\in J\mid i,k\not\in\{h,h+1\}\},\\
J'&=\{(i,j,i,l)\mid 1\leq i,j,l\leq m+n, j<l\}, \text{ and }\\
I'&=\{(i,j,i,l)\in J'\mid i\not\in\{h,h+1\}\}.
 \endaligned
 $$
By definition, $x_{i,j}=a_{i,j}$ for all $i,j$ with $i\not\in\{h,h+1\}$. Hence,
$$d(X)=\sum_{I}a_{i,j}a_{k,l}+\sum_{J\backslash I}x_{i,j}x_{k,l}+\sum_{I'}(-1)^{\hat i}a_{i,j}a_{i,l}+\sum_{J'\backslash I'}(-1)^{\hat{i}}x_{i,j}x_{il},$$
where
\begin{equation*}
\begin{aligned}
&\sum_{J\backslash I}x_{i,j}x_{k,l}\\
&=\sum_{i=h>k,j<l}(a_{h,j}+\nu_j)a_{k,l}+\sum_{i=h+1,h>k,j<l}(a_{h+1,j}-\nu_j)a_{k,l}+\\
&\quad+\sum_{i=h+1,k=h,j<l}(a_{h+1,j}-\nu_j)(a_{h,l}+\nu_l)+\\
&\quad+\sum_{i>h+1,k=h,j<l}a_{i,j}(a_{h,l}+\nu_l)+\sum_{i>h+1=k,j<l}a_{i,j}(a_{h+1,l}-\nu_l)\\
&=\sum_{J\backslash I}a_{i,j}a_{k,l}+\sum_{h>k,j<l}a_{k,l}\nu_j -\sum_{h>k,j<l}a_{k,l}\nu_j-\sum_{j<l}a_{h,l}\nu_j+\sum_{j<l}a_{h+1,j}\nu_l\\
&\quad -\sum_{j<l}\nu_j\nu_l
+\sum_{i>h+1,j<l}a_{i,j}\nu_l-\sum_{i>h+1,j<l}a_{i,j}\nu_l\\
&=\sum_{J\backslash I}a_{i,j}a_{k,l}-\sum_{j<l}a_{h,l}\nu_j
+\sum_{j<l}a_{h+1,j}\nu_l-\sum_{j<l}\nu_j\nu_l,\;
\end{aligned}
\end{equation*}
and
\begin{equation*}
\begin{aligned}
&\sum_{J'\backslash I'}(-1)^{\hat{i}}x_{i,j}x_{i,l}\\
&=\sum_{j<l}(-1)^{\hat{h}}(a_{h,j}+\nu_j)(a_{h,l}+\nu_l)+\sum_{j<l}(-1)^{\widehat{h+1}}(a_{h+1,j}-\nu_j)(a_{h+1,l}-\nu_l)\\
&=\sum_{J'\backslash I'}(-1)^{\hat{i}}a_{i,j}a_{i,l}+\biggl(\sum_{j<l}(-1)^{\hat{h}}a_{h,j}\nu_l
+\sum_{j<l}(-1)^{\hat{h}}a_{h,l}\nu_j\biggr)-\sum_{j<l}(-1)^{\widehat{h+1}}a_{h+1,j}\nu_l\\
&\quad-\sum_{j<l}(-1)^{\widehat{h+1}}a_{h+1,l}\nu_j+\sum_{j<l}((-1)^{\hat{h}}+(-1)^{\widehat{h+1}})\nu_j\nu_l.
\end{aligned}
\end{equation*}
Thus, since $d(U_p)=(-1)^{\hat{h}}p\sum_{j=1}^{m+n}a_{h,j}=(-1)^{\hat{h}}\sum_{1\leq j,k\leq m+n}a_{h,j}\nu_k$ by \eqref{$U_p,L_p$}, it follows that
\begin{equation*}
\begin{aligned}
d(X)=&d(A)-\sum_{j<l}a_{h,l}\nu_j+\sum_{j<l}a_{h+1,j}\nu_l-\sum_{j<l}\nu_j\nu_l+
\biggl(d(U_p) -(-1)^{\hat{h}}\sum_{j}a_{h,j}\nu_j\biggr)\\
&-\sum_{j<l}(-1)^{\widehat{h+1}}a_{h+1,j}\nu_l-\sum_{j<l}(-1)^{\widehat{h+1}}a_{h+1,l}\nu_j+\sum_{j<l}((-1)^{\hat{h}}+(-1)^{\widehat{h+1}})\nu_j\nu_l.
\end{aligned}
\end{equation*}
In other words,
\begin{equation}\label{$d(X)-d(A)-d(U_p)$}
\begin{aligned}
&d(X)-d(A)-d(U_p)\\
&=-\sum_{j<l}a_{h,l}\nu_j+\sum_{j<l}a_{h+1,j}\nu_l -(-1)^{\hat{h}}\sum_{j}a_{h,j}\nu_j-\sum_{j<l}(-1)^{\widehat{h+1}}a_{h+1,j}\nu_l+\\
&\quad-\sum_{j<l}(-1)^{\widehat{h+1}}a_{h+1,l}\nu_j+\sum_{j<l}((-1)^{\hat{h}}+(-1)^{\widehat{h+1}}-1)\nu_j\nu_l\\
&=\begin{cases}-\sum_{j\leq
l}a_{h,l}\nu_j-\sum_{j<l}a_{h+1,l}\nu_j+\sum_{j<l}\nu_j\nu_l,\;\;\,\qquad\quad\;\text{ if }h<m;\\
-\sum_{j<l}a_{h,l}\nu_j+2\sum_{j<l}a_{h+1,j}\nu_l+\sum_ja_{h,j}\nu_j\\
\qquad\qquad\qquad+\sum_{j<l}a_{h+1,l}\nu_j -3\sum_{j<l}\nu_j\nu_l,\qquad\;\quad\text{ if }h>m;\\
-\sum_{j>k}a_{m,j}+2\sum_{j<k}a_{m+1,j}+\sum_{j>k}a_{m+1,j}-a_{m,k},\text{ if }h=m,\nu=\bse_k.
\end{cases}
\end{aligned}
\end{equation}
Thus, for $h<m$, the exponent of the LHS of \eqref{goal} becomes
\begin{equation*}
\begin{aligned}
&d(X)-d(A)-d(U_p)+\sum_{j>l}2a_{h,j}\nu_l+2\sum_{k=1}^{m+n}a_{h,j}\nu_j\\
 &=-\sum_{j\leq
l}a_{h,l}\nu_j-\sum_{j<l}a_{h+1,l}\nu_j+\sum_{j<l}\nu_j\nu_l+\sum_{j>l}2a_{h,j}\nu_l+2\sum_{k=1}^{m+n}a_{h,k}\nu_k\\
&=\sum_{j>l}a_{h,j}\nu_l+\sum_{k=1}^{m+n}a_{h,j}\nu_j-\sum_{j<l}a_{h+1,l}\nu_j+\sum_{j<l}\nu_j\nu_l\\
&=\sum_{j\geq
l}a_{h,j}\nu_l-\sum_{j<l}a_{h+1,l}\nu_j+\sum_{j<l}\nu_j\nu_l=f_h(\nu,A),
\end{aligned}
\end{equation*}
while, for $h>m$, it has the form
\begin{equation*}
\begin{aligned}
&d(X)-d(A)-d(U_p)+2\sum_{j<l}\nu_j\nu_l-2\sum_{j<t}a_{h+1,j}\nu_t-2\sum_{k=1}^{m+n}a_{h,k}\nu_k\\
&=-\sum_{j<l}a_{h,l}\nu_j+2\sum_{j<l}a_{h+1,j}\nu_l+\sum_ja_{h,j}\nu_j+
\sum_{j<l}a_{h+1,l}\nu_j-3\sum_{j<l}\nu_j\nu_l\\
&\quad-2\sum_{j<t}a_{h+1,j}\nu_t-2\sum_{k=1}^{m+n}a_{h,k}\nu_k+2\sum_{j<l}\nu_j\nu_l\\
&=-\sum_{j\leq l}a_{h,l}\nu_j
+\sum_{j<l}a_{h+1,l}\nu_j-\sum_{j<l}\nu_j\nu_l=-f_h(\nu,A),
\end{aligned}
\end{equation*}
as desired.
 \end{proof}
Let
\begin{equation}\label{beta_m gamma_m}
\aligned
f_m(\bse_k,A)&=\sum_{j\geq k}a_{m,j}+\sum_{j>k}a_{m+1,j},\\
f'_m(\bse_k,A)&=\sum_{j\leq
k}a_{m+1,j}+\sum_{j<k}a_{m,j}.\endaligned
\end{equation}
We now look at the odd generator case.
\begin{proposition}[\bf The $\boldsymbol{h=m}$ case]\label{integral basis multiplication $h=m$}
Let $A=(a_{i,j})\in M( m|n,r)$. Assume $h= m$.
Define $U_1=U_{1}(h,\ro(A)),L_1=L_{1}(h,\ro(A))\in M(m|n,r)$ as in \eqref{$U_p,L_p$}. The following multiplication
formulas hold in the $\up$-Schur superalgebra $\mathcal{S}_\up(m|n,r)$
over $\mathcal{Z}$:
$$\aligned
(1)\; \xib_{U_1}\xib_A&=\sum_{\substack{k\in[1,m+n]\\ a_{m+1,k}\geq
1}}(-1)^{\sum_{i>m,j<k}a_{i,j}}\up_m^{f_m(\bse_k,A)}\overline{[\![a_{m,k}+1]\!]}_{\up^2_m}\xib_{A+E_{m,k}-E_{m+1,k}};\\
(2)\; \xib_{L_1}\xib_A&=\sum_{\substack{k\in[1,m+n]\\ a_{m,k}\geq
1}}(-1)^{\sum_{i>m,j<k}a_{i,j}}\up_{m+1}^{f'_m(\bse_k,A)}\overline{[\![a_{m+1,k}+1]\!]}_{\up^2_{m+1}}\xib_{A-E_{m,k}+E_{m+1,k}}.\endaligned$$
\end{proposition}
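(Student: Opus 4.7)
The proof follows the template of Proposition~\ref{integral basis multiplication}, but is specialised to the single $p=1$ case since $U_p$ (or $L_p$) with $h=m$ and $p\geq 2$ does not lie in $M(m|n,r)$, so there is no summation over $\nu$ to manage.

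For part~(1), the plan is to start from Lemma~\ref{main lemma}(1) with $h=m$, which already supplies the sign $(-1)^{\sum_{i>m,j<k}a_{i,j}}$, the $\bsq$-factor $\bsq^{-\sum_{j<k}a_{m+1,j}+\sum_{j>k}a_{m,j}}$, and the unnormalised Gaussian polynomial $[\![a_{m,k}+1]\!]_{\bsq_m}$. I would then apply the anti-automorphism $\tau$ of Lemma~\ref{norm basis}(2), together with the normalisation in~\eqref{d(A)}, to translate this identity from the $N_{A'}$-basis into an expression for $\xib_{U_1}\xib_A$ in the $\xib$-basis.

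Next, I would compute the total exponent of $\up$ in the coefficient of $\xib_{X_k}$ (with $X_k=A+E_{m,k}-E_{m+1,k}$) as a sum of three contributions: (i) the exponent coming from Lemma~\ref{main lemma}; (ii) the shift $d(X_k)-d(A)-d(U_1)$ produced by the normalisation, which is precisely the $h=m,\;\nu=\bse_k$ instance of the formula for $d(X)-d(A)-d(U_p)$ established in the proof of Proposition~\ref{integral basis multiplication}, namely $-\sum_{j>k}a_{m,j}+2\sum_{j<k}a_{m+1,j}+\sum_{j>k}a_{m+1,j}-a_{m,k}$; and (iii) the factor arising from $[\![a_{m,k}+1]\!]_{\bsq_m}=\up_m^{2a_{m,k}}\overline{[\![a_{m,k}+1]\!]}_{\up_m^2}$. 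Summing and simplifying using the identity $\sum_{j\geq k}a_{m,j}=a_{m,k}+\sum_{j>k}a_{m,j}$ should recover the target exponent $f_m(\bse_k,A)$ of~\eqref{beta_m gamma_m}.

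Part~(2) is handled by the identical argument using Lemma~\ref{main lemma}(2); the companion expression for $d(X)-d(A)-d(L_1)$ is obtained by interchanging the roles of $h$ and $h+1$, as indicated at the end of the proof of Lemma~\ref{main lemma}. The main obstacle is the bookkeeping of exponents: since $(-1)^{\hat m}=+1$ and $(-1)^{\hat{m+1}}=-1$ contribute with opposite signs in the general formula for $d(X)-d(A)-d(U_p)$, the specialisation to $\nu=\bse_k$ produces the characteristic term $-a_{m,k}$ that must cancel against the Gaussian conversion factor $\up_m^{2a_{m,k}}$. The sign $(-1)^{\sum_{i>m,j<k}a_{i,j}}$ is inherited directly from Lemma~\ref{main lemma}, since it depends only on $A$ and $k$ and so is unaffected by the normalisation.
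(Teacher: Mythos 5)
Your proposal is correct and follows essentially the same route as the paper's proof: specialise Lemma \ref{main lemma} to $h=m$, convert to the $\xib$-basis via $\tau$ and the normalisation \eqref{d(A)}, and verify the exponent identity using the $h=m$, $\nu=\bse_k$ case of \eqref{$d(X)-d(A)-d(U_p)$} together with the Gaussian conversion factor $\up^{2a_{m,k}}$. The paper likewise disposes of part (2) by symmetry, so nothing essential is missing.
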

\begin{proof} We only prove (1); the proof of (2) is similar.
By Lemma \ref{main lemma} and noting $U_1=B$, we have
$$N_{A'}N_{U_1'}=\sum_{\substack{k\in[1,m+n]\\ a_{h+1,k}\geq1}}(-1)^{\sum_{i> m,j<k}a_{i,j}}\bsq^{-\sum_{j<k}a_{m+1,j}}\bsq^{\sum_{j>k}a_{m,j}}[\![a_{m,k}+1]\!]
N_{(A+E_{h,k}-E_{h+1,k})'}.$$ Recalling $\bsq=\up^2$ and $[\![a_{m,k}+1]\!]=\up^{2a_{m,k}}\overline{[\![a_{m,k}+1]\!]}$, assertion (1) is
equivalent to
$$\up^{d(X)-d(A)-d(U_1)}
\up^{-2\sum_{j<k}a_{m+1,j}}\up^{2\sum_{j>k}a_{m,j}}\up^{2a_{m,k}}=\up^{\sum_{j\geq
k}a_{m,j}+\sum_{j>k}a_{m+1,j}}.$$
 By
(\ref{$d(X)-d(A)-d(U_p)$}),
\begin{equation*}
\begin{aligned}
d(X)-d(A)-d(U_1)=-\sum_{j>k}a_{m,j}+2\sum_{j<k}a_{m+1,j}+\sum_{j>k}a_{m+1,j}-a_{m,k}.
\end{aligned}
\end{equation*}
Hence,
\begin{equation*}
\begin{aligned}
d(X)-d(A)-d(U_1)&-2\sum_{j<k}a_{m+1,j}+2\sum_{j>k}a_{m,j}+2a_{m,k}\\
&=\sum_{j>k}a_{m,j}+\sum_{j>k}a_{m+1,j}+a_{m,k},
\end{aligned}
\end{equation*}
as desired.
\end{proof}

\section{Uniform spanning sets over $\mathbb Q(\up)$}

We now use the multiplication formulas to show the structure of quantum group type for $\up$-Schur superalgebras. First, we describe a spanning set for $\bsS(m|n,r)$ whose members
are linear combination of $\xib_A$'s with the same off-diagonal entries. To achieve this goal, we need to extend further the multiplication formulas above to these elements, generalising \cite[Lem.~5.3]{BLM} to the super case.
 As is seen above, manipulating the sign is the key to the success of such generalisation.

For $A\in M(m|n)$, let
\begin{equation}\label{signAbar}
\bar{A}=\sum_{\substack{m+n\geq i> m\geq k\geq1
\\m<j<l\leq m+n}}a_{i,j}a_{k,l}.
\end{equation}
We will see in \S8 that this number occurs naturally in a triangular relation.

By the definition, we have the following.
\begin{lemma}\label{Abar} Let $\mu\in\mathbb N^{m+n}$ and $B=(b_{i,j})\in M(m|n)$.
\begin{enumerate}
\item If $A=\diag{(\mu)}$ or $A=E_{h,h+1}+\mu$ or
$A=E_{h+1,h}+\mu$, where $X+\mu:=X+\diag(\mu)$, then $\overline{A}=0$;
\item If $h\neq m$, the
$\overline{B+E_{h,k}-E_{h+1,k}}=\overline{B}$;
\item If $h=m$, then
$$\overline{B+E_{m,k}-E_{m+1,k}}=\begin{cases}
\overline{B},&\text{ if }k\leq m;\\
\overline{B}-\sum_{i\leq
m,j>m+1}b_{i,j},&\text{ if }m+1=k;\\
\overline{B}+\sum_{i>m,m<j<k}b_{i,j}-\sum_{i\leq
m,j>k}b_{i,j},&\text{ if }m+1<k.
\end{cases}$$
\end{enumerate}
\end{lemma}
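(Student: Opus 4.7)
The plan is to exploit the bilinear structure of $\bar A$: by \eqref{signAbar}, it is a sum of products $a_{i,j}a_{k,l}$ whose first factor ranges over the $A_{22}$-block (rows and columns $>m$) and whose second factor ranges over the $A_{12}$-block (rows $\le m$, columns $>m$), subject to the column inequality $j<l$. Each of the three parts will then follow by tracking which entries of $A$ actually enter this expression and using bilinearity to compute any change.

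For part (1), I would verify by inspection that for each of the three types of matrix no pair of nonzero entries can simultaneously fill both factor-roles. In the diagonal case every $A_{12}$-entry automatically vanishes (since $A_{12}$ is disjoint from the diagonal), so the sum is empty. For $A=E_{h,h+1}+\mu$, the unique off-diagonal nonzero entry sits at $(h,h+1)$; it lies in $A_{12}$ only when $h=m$, in which case the partner index range $m<j<l=m+1$ is empty, and it lies in $A_{22}$ only when $h>m$, in which case every $A_{12}$-entry of $A$ is zero (the only position that could be nonzero in $A_{12}$ requires $h\le m$). The matrix $A=E_{h+1,h}+\mu$ is handled identically.

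For parts (2) and (3) I would compute $\Delta:=\overline{B+E_{h,k}-E_{h+1,k}}-\bar B$ by expanding using bilinearity. The two perturbed positions $(h,k)$ and $(h+1,k)$ cannot jointly fill both factor-roles of a single term of $\bar B$: the only geometric configuration that would allow it, namely $(h,k)$ in $A_{12}$ and $(h+1,k)$ in $A_{22}$ (forcing $h=m$ and $k>m$), requires the column inequality $k<k$. Hence $\Delta$ splits into two first-order pieces, one per perturbed entry, each of which contributes only when the entry lies in $A_{12}\cup A_{22}$. For (2), $h\neq m$ means rows $h$ and $h+1$ are both $\le m$ or both $>m$; the two perturbations either both miss $A_{12}\cup A_{22}$ (when $k\le m$) or both hit the same block with identical partner-sums, so their $\pm1$ contributions cancel and $\Delta=0$. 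For (3), $h=m$ makes the two rows straddle the block boundary, so cancellation fails; splitting on $k\le m$, $k=m+1$, and $k>m+1$ gives the three stated formulas, where the only mildly subtle case is $k=m+1$: the perturbation at $(m,m+1)$ (in $A_{12}$) has empty partner-range $m<j<m+1$ and is therefore silent, so only the $(m+1,m+1)$ perturbation contributes, producing the sum $-\sum_{i\le m,\,j>m+1}b_{i,j}$.

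I do not foresee a genuine obstacle; the lemma is a direct combinatorial consequence of the definition of $\bar A$ together with the block structure of matrices in $M(m|n)$. The only item requiring care is correct bookkeeping of which perturbed entry lies in which block and, in case (3), ensuring that the partner ranges written down are precisely those produced by the inequality $j<l$.
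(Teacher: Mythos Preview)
Your proposal is correct and follows essentially the same approach as the paper's own proof: both identify $\bar A$ as a sum of products $a_{i,j}a_{k,l}$ with $a_{i,j}$ in the $A_{22}$-block, $a_{k,l}$ in the $A_{12}$-block, and $j<l$, and then read off the claims by tracking which perturbed entries fall into which block. Your write-up is in fact more thorough than the paper's (you rule out cross terms explicitly and handle each subcase separately), but the underlying idea is identical.
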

\begin{proof} Write a matrix $A\in M(m|n)$ in the form $\biggl(\begin{matrix}A_{11}&A_{12}\\A_{21}&A_{22}\end{matrix}\biggr)$, where $A_{11}$ is $m\times m$. Then $\bar A$ is a sum of $a_{i,j}a_{k,l}$ with $a_{i,j}\in A_{22}$
and $a_{k,l}\in A_{12}$ and $j<l$. So (1) is clear from the definition and (2) for $h<m$ and (3) for $k\leq m$ are clear. For the case $h>m$ of (2), $(a_{h+1,k}-1)a_{i,j}$ and $(a_{h,k}-1)a_{i,j}$ occur in the sum. The remaining cases can be seen similarly.
\end{proof}
Consider now the set of matrices with zero diagonal:
$$M(m|n)^\pm=\{A=(a_{i,j})\in M(m|n)\mid a_{i,i}=0,
1\leq i\leq m+n\}.$$
For $A\in M(m|n)^{\pm}$ and
$\bsj=(j_1,j_2,\cdots,j_{m+n})\in\mathbb{Z}^{m+n}$, define
\begin{equation}\label{Ajr}
A(\bsj,r)=\begin{cases}\sum_{\substack{\lambda\in\Lambda(m|n,r-|A|)}}(-1)^{\overline{A+\lambda}}\up^{\lambda\centerdot\bsj}\xib_{A+\lambda},&\text{ if }|A|\leq r;\\
0,&\text{ otherwise,}\end{cases}
\end{equation}
where $\centerdot=\centerdot_s$ denotes the super (or signed) ``dot product'':
\begin{equation}\label{dot product}
\lambda\centerdot\bsj=\sum_{i=1}^{m+n}(-1)^{\hat{i}}\lambda_ij_i=\la_1j_1+\cdots+\la_mj_m-\la_{m+1}j_{m+1}-\cdots-\la_{m+n}j_{m+n}.
\end{equation}
Also, for notational simplicity in the multiplication formulas below, we define $A(\bsj,r)$ to be 0 if $A\not\in M(m|n,r)$. In particular, this is the case when $A$ has a negative entry or an entry $>1$ in the $m\times n$ or $n\times m$ block. Note that, since $\xi_{A+\la}$ has the same $\mathbb Z_2$-grading degree $\hat A$ (see \eqref{hatA}), $A(\bsj,r)$ is homogeneous of degree $\hat A$.

Let
$$\bsS(m|n,r)=\sS_\up(m|n,r)\otimes{\mathbb Q(\up)}.$$
We want to prove that $\bsS(m|n,r)$ is the span of
$\{A(\bsj,r)\mid A\in M(m|n)^\pm,\bsj\in\mathbb{Z}^n\}.$ The following
result shows that this span is closed under the multiplication by $O(\bsj,r)$.
\begin{proposition}\label{multiply by O}
For $\bsj,\bsj'\in\mathbb{Z}^{m+n}$ and $A=(a_{k,l})\in M(m|n)^{\pm}$, we have in $\bsS(m|n,r)$
\begin{itemize}
\item[(1)] $O(\bsj,r)A(\bsj',r)=\up^{\ro(A)\centerdot \bsj}A(\bsj+\bsj',r)$;
\item[(2)] $A(\bsj',r)O(\bsj,r)=\up^{\co(A)\centerdot \bsj}A(\bsj+\bsj',r)$.
\end{itemize}
\end{proposition}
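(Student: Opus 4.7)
The plan is to begin by identifying $O(\bsj,r)$ as the special case $A = 0$ of the notation \eqref{Ajr}: since $\overline{\la} = 0$ by Lemma \ref{Abar}(1) and $d(\diag(\la)) = 0$, one has
$$O(\bsj,r) = \sum_{\la \in \La(m|n,r)} \up^{\la\centerdot\bsj}\xib_\la.$$
The proposition will then reduce to a short computation built on one orthogonality relation: for every $B \in M(m|n,r)$ and $\la \in \La(m|n,r)$,
$$\xib_\la \xib_B = \delta_{\la,\ro(B)}\xib_B, \qquad \xib_B \xib_\la = \delta_{\la,\co(B)}\xib_B.$$

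To prove this orthogonality, I would apply the anti-automorphism $\tau$ from Lemma \ref{norm basis}(2) to the identities $N_{B'}N_\la = \delta_{\la,\ro(B)}N_{B'}$ and $N_\la N_{B'} = \delta_{\la,\co(B)}N_{B'}$ from Lemma \ref{norm basis}(1). The key observations are that $\diag(\la)$ is self-transpose, so $\tau(N_\la) = N_\la$; that $d(\diag(\la)) = 0$, so $\xib_\la = N_\la$; and that $\hat\la = 0$, so the super anti-automorphism $\tau$ does not introduce a Koszul sign when separating a product involving $N_\la$. Dividing through by $\bsq^{d(B)}$ then yields the two relations in their normalised form.

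For part (1), bilinear expansion gives
$$O(\bsj,r)\,A(\bsj',r) \;=\; \sum_{\la,\mu}\up^{\la\centerdot\bsj}(-1)^{\overline{A+\mu}}\up^{\mu\centerdot\bsj'}\xib_\la\xib_{A+\mu}.$$
The orthogonality collapses the inner $\la$-sum to the unique term $\la = \ro(A+\mu) = \ro(A) + \mu$. By $\mathbb Z$-bilinearity of the dot product \eqref{dot product} one has $\la\centerdot\bsj = \ro(A)\centerdot\bsj + \mu\centerdot\bsj$, so $\up^{\ro(A)\centerdot\bsj}$ factors out and the remaining $\mu$-sum is exactly $A(\bsj + \bsj',r)$. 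Part (2) is the mirror computation using $\co(A+\mu) = \co(A) + \mu$.

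There is no substantive obstacle; the one bookkeeping point is to check that the sign $(-1)^{\overline{A+\mu}}$ passes through without modification. This is automatic, because by the definition \eqref{signAbar} of $\overline{\,\cdot\,}$ only off-diagonal entries contribute, and because $\xib_\la$ is homogeneous of grading $0$, so no extra super-commutation sign is created when $\xib_\la$ meets $\xib_{A+\mu}$.
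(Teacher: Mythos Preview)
Your proposal is correct and follows essentially the same route as the paper: expand bilinearly, invoke the orthogonality $\xib_\la\xib_{A+\mu}=\delta_{\la,\ro(A+\mu)}\xib_{A+\mu}$ (which the paper cites simply as ``by Lem.~\ref{norm basis}''), and then use bilinearity of the super dot product to factor out $\up^{\ro(A)\centerdot\bsj}$. Your extra care in deriving the $\xib$-orthogonality from the $N$-orthogonality via $\tau$ and in checking that no super sign appears is more explicit than the paper, but not a different argument.
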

\begin{proof}If $|A|>r$, then both sides are zero.
Assume now $|A|\leq r$. The proof of (1) follows from definition:
$$\aligned
\text{LHS}&=\sum_{\lambda\in{\Lambda(m|n,r)}}\up^{\lambda\centerdot\bsj}
\xib_{\lambda}\sum_{\mu\in\Lambda(m|n,r-|A|)}(-1)^{\overline{A+\mu}}\up^{\mu\centerdot\bsj'}\xib_{A+\mu}\\
&=\sum_{\mu\in\Lambda(m|n,r-|A|)}(-1)^{\overline{A+\mu}}\up^{(\mu+\ro(A))\centerdot
\bsj+\mu\centerdot\bsj'}\xib_{\mu+\ro(A)}\xib_{A+\mu}\text{ (by Lem~\ref{norm basis})}\\
&=\sum_{\mu\in\Lambda(m|n,r-|A|)}(-1)^{\overline{A+\mu}}\up^{\ro(A)\centerdot\bsj+\mu\centerdot(\bsj+\bsj')}\xib_{A+\mu}=\text{RHS}.
\endaligned$$
The proof of (2) is similar.
\end{proof}
Let $\bsS(m|n,0)=\mathbb Q(\up)$ and $O(\bsj,0)=1$ for all $\bsj\in \mathbb{Z}^{m+n}$.
By Proposition \ref{multiply by O}, similar to \cite[13.29]{DDPW}, we can
get the following result.
\begin{corollary} For all $r\geq0$, the set $\mathcal L_r=\{A(\bsj,r)\mid A\in M(m|n)^\pm,\bsj\in\mathbb{Z}^{m+n}\}$ spans
the $\up$-Schur superalgebra $\bsS(m|n,r)$ over $\mathbb Q(\up)$.
\end{corollary}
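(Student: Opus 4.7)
The plan is to invert the defining formula \eqref{Ajr} for $A(\bsj,r)$ via a Vandermonde argument. By Lemma~\ref{norm basis}(1) together with the fact that $\tau$ is a bijective anti-automorphism and $\bsq^{-d(A)}$ is a nonzero scalar, the family $\{\xib_A:A\in M(m|n,r)\}$ is a $\mathbb Q(\up)$-basis of $\bsS(m|n,r)$. Every $A\in M(m|n,r)$ decomposes uniquely as $A=A^\circ+\diag(\mu)$ with $A^\circ\in M(m|n)^\pm$ and $\mu\in\Lambda(m|n,r-|A^\circ|)$. Hence it suffices, for each fixed $A^\circ\in M(m|n)^\pm$ with $t:=|A^\circ|\le r$, to show that every $\xib_{A^\circ+\mu}$ with $\mu\in\Lambda:=\Lambda(m|n,r-t)$ lies in the $\mathbb Q(\up)$-span of $\{A^\circ(\bsj,r):\bsj\in\mathbb Z^{m+n}\}$.

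Enumerate $\Lambda=\{\mu^{(1)},\ldots,\mu^{(N)}\}$. By \eqref{Ajr},
\begin{equation*}
A^\circ(\bsj,r)=\sum_{p=1}^{N}\varepsilon_p\,\up^{\mu^{(p)}\centerdot\bsj}\,\xib_{A^\circ+\mu^{(p)}},\qquad \varepsilon_p:=(-1)^{\overline{A^\circ+\mu^{(p)}}}\in\{\pm 1\}.
\end{equation*}
The signs $\varepsilon_p$ constitute an invertible diagonal factor, so recovering the $\xib_{A^\circ+\mu^{(p)}}$'s reduces to exhibiting $\bsj^{(1)},\ldots,\bsj^{(N)}\in\mathbb Z^{m+n}$ for which the $N\times N$ matrix $V:=\bigl(\up^{\mu^{(p)}\centerdot\bsj^{(s)}}\bigr)_{1\le s,p\le N}$ is invertible over $\mathbb Q(\up)$.

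To this end I will fix any integer $M>r-t$ and set $\bsj_0:=\bigl((-1)^{\hat 1}M^0,\,(-1)^{\hat 2}M^1,\,\ldots,\,(-1)^{\widehat{m+n}}M^{m+n-1}\bigr)\in\mathbb Z^{m+n}$. By the signed dot product \eqref{dot product} the factors $(-1)^{\hat i}$ cancel in pairs, yielding $\mu\centerdot\bsj_0=\sum_{i=1}^{m+n}\mu_iM^{i-1}$, a base-$M$ expansion with digits $\mu_i\in[0,r-t]\subset[0,M)$. Therefore the integers $\mu^{(p)}\centerdot\bsj_0$ are pairwise distinct as $p$ varies. Putting $\bsj^{(s)}:=(s-1)\bsj_0$ turns $V$ into the Vandermonde matrix $(\alpha_p^{\,s-1})_{s,p}$ in the pairwise distinct nonzero elements $\alpha_p:=\up^{\mu^{(p)}\centerdot\bsj_0}$ of $\mathbb Q(\up)$, which is invertible. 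Solving the resulting linear system expresses each $\xib_{A^\circ+\mu^{(p)}}$ as a $\mathbb Q(\up)$-linear combination of the $A^\circ(\bsj^{(s)},r)$'s, completing the proof.

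No substantial obstacle arises; the only delicate point is the presence of the super sign $(-1)^{\hat i}$ in \eqref{dot product}, which would otherwise collapse several $\mu\centerdot\bsj$ values together. This is neutralised by building a matching $(-1)^{\hat i}$ into the $i$th coordinate of $\bsj_0$, so that in $\mu\centerdot\bsj_0$ the signs cancel and one recovers an unambiguous positive base-$M$ expansion.
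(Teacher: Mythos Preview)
Your proof is correct and takes a genuinely different route from the paper's. The paper argues structurally: it shows the span $\mathcal T$ of $\mathcal L_r$ is closed under multiplication by $O(\bsj,r)$ (Proposition~\ref{multiply by O}), then uses quantum binomial expressions $\xib_\lambda=\prod_i\left[{O(\bse_i,r);0\atop \lambda_i}\right]$ to place each diagonal idempotent $\xib_\lambda$ in $\mathcal T$, and finally observes $\xib_A=A^\pm(\mathbf 0,r)\xib_{\co(A)}\in\mathcal T$. Your argument bypasses all algebra structure and simply inverts the defining linear relation \eqref{Ajr} by a Vandermonde determinant, after the clean observation that inserting $(-1)^{\hat i}$ into the $i$th coordinate of $\bsj_0$ cancels the super sign in $\centerdot_s$ and yields a genuine base-$M$ expansion. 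This is more elementary and self-contained for this statement; the paper's route, by contrast, foreshadows the algebraic machinery (closure under $O(\bsj)$ and the role of idempotents) that is reused later. It is worth noting that your sign-cancellation device is exactly the super analogue of the $\boldsymbol 1^\pm$ trick the paper itself invokes in the proof of Lemma~\ref{Bruhat}.
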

\begin{proof} Let $\mathcal{T}$ be the $\mathbb{Q}(\up)$-span of $\mathcal L_r$. Then, by Proposition \ref{multiply by O}, $\mathcal T$ is closed under multiplication by $O(\bsj,r)$ for all $\bsj\in\mathbb Z^{m+n}$. The first thing we need to prove is that $\xi_\la=\xi_{\diag(\la)}\in\mathcal T$ for all $\la\in\La(m|n,r)$. This is because
$$\xib_\lambda=\prod_{i=1}^{m+n}\left[O(\bse_i,r);0\atop
\lambda_i\right],$$
where $$\left[O(\bse_i,r);a\atop
\lambda_i\right]:=\prod_{j=1}^{\lambda_i}\frac{O(\bse_i,r)\up_i^{a-j+1}-O(-\bse_i,r)\up_i^{a+j-1}}{\up_i^j-\up_i^{-j}}\in
\mathcal T.$$
Next, for any $A\in M(m|n,r)$,
let $A^\pm$ be the matrix obtained by replacing all the diagonal
entries of $A$ by $0$. Then, putting $\lambda=\co(A)$ and
$\mu=\ro(A)$, we have $\xib_A=\xib_\mu
A^\pm(\mathbf{0},r)=A^\pm(\mathbf{0},r)\xib_\lambda\in\mathcal T$, by again Proposition \ref{multiply by O}. Hence,
$\mathcal{T}=\bsS(m|n,r)$.
For a more detailed argument, see the proof of
\cite[Lem.~3.29]{DDPW}.
\end{proof}

By these uniform spanning sets $\sL_r$, we define
\begin{equation}\label{A(j)}
\aligned
\bsS(m|n)&:=\prod_{r\geq0}\bsS(m|n,r)\quad\text{and } \\
\fA(m|n)&:=\text{span}\{A(\bsj)\mid A\in M(m|n)^\pm,\bsj\in\mathbb{Z}^{m+n}\},\\
\text{where}\qquad A(\bsj)&:=\sum_{r\geq0} A(\bsj,r)\in\bsS(m|n).\endaligned
\end{equation}
Note that, if we assign to $A(\bsj)$ the grading degree $\hat A$, then $\fA(m|n)$ is a superspace.
\begin{lemma}\label{Bruhat}
The set $\{A(\bsj)\mid A\in M(m|n)^\pm,\bsj\in\mathbb{Z}^{m+n}\}$ is linearly independent and forms a basis of homogeneous elements for the superspace $\fA(m|n)$.
\end{lemma}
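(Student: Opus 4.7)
The plan is to prove the two assertions—homogeneity of each $A(\bsj)$ and linear independence of the whole family—separately, with the independence being the substantive part.

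For homogeneity, I will note that in the definition of $A(\bsj,r)$ the only basis elements appearing are $\xib_{A+\la}$ with $\la$ purely diagonal. Since the parity in \eqref{hatA} depends only on the off-diagonal $m\times n$ and $n\times m$ blocks, which are unchanged by adding a diagonal matrix, every such $\xib_{A+\la}$ carries the same $\mathbb Z_2$-degree $\hat A$. Hence $A(\bsj,r)$ is homogeneous of degree $\hat A$ for each $r$, and so is $A(\bsj)$.

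For linear independence, I will start from a hypothetical finite dependence $\sum_{(A,\bsj)\in F}c_{A,\bsj}A(\bsj)=0$ in $\bsS(m|n)$ with $c_{A,\bsj}\in\mathbb Q(\up)$, and project onto each component $\bsS(m|n,r)$. Using \eqref{Ajr} this reads
$$\sum_{(A,\bsj)\in F,\,|A|\le r} c_{A,\bsj}\!\!\sum_{\la\in\Lambda(m|n,r-|A|)}\!\!(-1)^{\overline{A+\la}}\up^{\la\centerdot\bsj}\xib_{A+\la}=0.$$
Every $B\in M(m|n,r)$ decomposes uniquely as $B=B^\pm+\diag(\la_B)$ with $B^\pm\in M(m|n)^\pm$; since $\{\xib_B:B\in M(m|n,r)\}$ is a $\mathbb Q(\up)$-basis of $\bsS(m|n,r)$ by Lemma \ref{norm basis}, comparing the coefficient of $\xib_B$ forces
$$\sum_{\bsj\,:\,(B^\pm,\bsj)\in F} c_{B^\pm,\bsj}\,\up^{\la_B\centerdot\bsj}=0.$$
As $r$ varies over all $r\ge|A|$, the associated $\la_B$ ranges over all of $\mathbb N^{m+n}$ for each fixed $A=B^\pm$ occurring in $F$, so for each such $A$ I obtain the character identity $\sum_{\bsj}c_{A,\bsj}\up^{\la\centerdot\bsj}=0$ valid for every $\la\in\mathbb N^{m+n}$.

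The conclusion will follow by Artin's theorem on linear independence of characters applied to the monoid homomorphisms $\chi_\bsj:\mathbb N^{m+n}\to\mathbb Q(\up)^\times$, $\la\mapsto\up^{\la\centerdot\bsj}$. Injectivity of $\bsj\mapsto\chi_\bsj$ is automatic, since $\chi_\bsj(\bse_i)=\up^{(-1)^{\hat i}j_i}$ recovers $j_i$ from the fact that $\up$ is transcendental. Hence distinct $\bsj$'s yield distinct, and therefore linearly independent, characters, forcing each $c_{A,\bsj}$ to vanish—contradicting nontriviality. The main point to keep track of is the combinatorial reindexing that converts a dependence among the $A(\bsj)$'s into a pure character relation on $\mathbb N^{m+n}$; once that is observed, everything else is automatic.
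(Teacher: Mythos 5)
Your proof is correct. The paper itself only cites \cite[Prop.~4.1(2)]{DF09} (with $\boldsymbol 1$ replaced by $\boldsymbol 1^\pm$), and your argument is the same standard reduction --- project to each $\bsS(m|n,r)$, compare coefficients against the basis $\{\xib_B\}$ from Lemma \ref{norm basis} (the common sign $(-1)^{\overline{B}}$ divides out), and arrive at $\sum_{\bsj}c_{A,\bsj}\up^{\la\centerdot\bsj}=0$ for all $\la\in\mathbb N^{m+n}$ --- finished off by Dedekind--Artin independence of the monoid characters $\chi_{\bsj}$ in place of the Vandermonde-type argument of the cited reference, which is an equally valid endgame since $\up$ is an indeterminate and $\bsj\mapsto\chi_{\bsj}$ is injective.
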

\begin{proof}The proof is almost identical to the proof of \cite[Prop.~4.1(2)]{DF09} if we replace the vector $\boldsymbol 1=(1,1,\ldots,1)$ used in the last paragraph by
$\boldsymbol 1^\pm=(\underbrace{1,\ldots,1}_m,\underbrace{-1,\ldots,-1}_n).$
\end{proof}
We will eventually prove that the subspace $\fA(m|n)$ is a subalgebra isomorphic to the quantum supergroup $\bfU(\mathfrak{gl}_{m|n})$.
We now show that it is closed under the multiplication by ``generators''.

First, the formulas given in Proposition \ref{multiply by O} gives, by taking sum  over all $r\geq0$ (i.e.~removing $r$ throughout), the corresponding formulas in $\bsS(m|n)$.
\begin{proposition}\label{multiply O}
For $\bsj,\bsj'\in\mathbb{Z}^{m+n}$ and $A=(a_{k,l})\in M(m|n)^{\pm}$, we have in $\bsS(m|n)$
\begin{itemize}
\item[(1)] $O(\bsj)A(\bsj')=\up^{\ro(A)\centerdot \bsj}A(\bsj+\bsj')$;
\item[(2)] $A(\bsj')O(\bsj)=\up^{\co(A)\centerdot \bsj}A(\bsj+\bsj')$.
\end{itemize}
In particular, $\fA(m|n)$ is closed under the multiplication by $O(\bsj)$.
\end{proposition}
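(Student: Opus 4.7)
The plan is to deduce this statement directly from Proposition \ref{multiply by O} by using the component-wise multiplication on the direct product $\bsS(m|n)=\prod_{r\geq 0}\bsS(m|n,r)$. First I would unfold the definitions in \eqref{A(j)}: the formal sums $O(\bsj)=\sum_{r\geq 0}O(\bsj,r)$ and $A(\bsj')=\sum_{r\geq 0}A(\bsj',r)$ are precisely the tuples whose $r$-th coordinate is $O(\bsj,r)$ and $A(\bsj',r)$ respectively. Since the factors in the direct product are mutually orthogonal as algebras (products across different $r$ vanish), computing $O(\bsj)A(\bsj')$ reduces componentwise to computing $O(\bsj,r)A(\bsj',r)$ in each finite-dimensional $\bsS(m|n,r)$.

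The core step is then a direct appeal to Proposition \ref{multiply by O}(1), which already gives
$$O(\bsj,r)A(\bsj',r)=\up^{\ro(A)\centerdot\bsj}A(\bsj+\bsj',r)$$
for every $r\geq 0$. A small bookkeeping check is needed in the range $|A|>r$: there $A(\bsj',r)=0$ by the convention adopted after \eqref{Ajr}, and $A(\bsj+\bsj',r)=0$ for the same reason, so the finite-level identity persists. Summing these identities over all $r\geq 0$ and reassembling the components produces
$$O(\bsj)A(\bsj')=\up^{\ro(A)\centerdot\bsj}\sum_{r\geq 0}A(\bsj+\bsj',r)=\up^{\ro(A)\centerdot\bsj}A(\bsj+\bsj'),$$
which is (1). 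Part (2) is obtained identically by invoking Proposition \ref{multiply by O}(2) instead, the only difference being the substitution of $\co(A)$ for $\ro(A)$.

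Finally, the ``in particular'' closure assertion is immediate: both formulas exhibit $O(\bsj)A(\bsj')$ and $A(\bsj')O(\bsj)$ as $\mathbb Q(\up)$-scalar multiples of the basis element $A(\bsj+\bsj')$ of $\fA(m|n)$ from Lemma \ref{Bruhat}. There is essentially no obstacle here beyond the minor verification of the $|A|>r$ range; conceptually this is simply the routine passage from the finite-level multiplication formula to its ``stable'' analogue in $\bsS(m|n)$, and this proposition is what legitimises working inside $\fA(m|n)$ in the sequel when one begins multiplying by the candidate generators.
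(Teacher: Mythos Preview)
Your proposal is correct and follows exactly the paper's approach: the paper simply remarks that the formulas in Proposition \ref{multiply by O} hold in $\bsS(m|n)$ ``by taking sum over all $r\geq0$,'' and you have spelled out precisely that passage, including the trivial $|A|>r$ check.
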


 Note that the sign $(-1)^{\overline{A+\lambda}}$ in \eqref{Ajr} disappears in the following cases:
$$E_{h,h+1}(\bsj,r)=\sum_{\lambda\in\Lambda(m|n,r-1)}\up^{\lambda\centerdot\bsj}\xib_{E_{h,h+1}+\lambda}
\mbox{ and }
E_{h+1,h}(\bsj,r)=\sum_{\lambda\in\Lambda(m|n,r-1)}\up^{\lambda\centerdot\bsj}\xib_{E_{h+1,h}+\lambda}.$$

 For given $1\leq h, k<m+n$ and $A\in M(m|n)$, let
\begin{equation}\label{$f(i)$ and $f'(i)$}
\aligned
\alpha_h&=\bse_h-\bse_{h+1},\\
\beta_h&=-\bse_h-\bse_{h+1},\\
f(k)&=f_h(\bse_k,A)=\sum_{j\geq k}a_{h,j}-(-1)^{\vep}\sum_{j>k}a_{h+1,j},\text{ and }\\
f'(k)&=f'_h(\bse_k,A)=\sum_{j\leq k}a_{h+1,j}-(-1)^{\vep}\sum_{j<k}a_{h,j},\quad\text{where}\\
\vep&=\vep_{h,h+1}=\widehat h+\widehat{h+1}\in\{0,1\}
\endaligned
\end{equation}
(see \eqref{beta_h1}, \eqref{gamma_h1}, and \eqref{beta_m gamma_m}).
 For $A\in M(m|n)$ and $k\in[1,m+n]$, define
\begin{equation}\label{sigma(k)}
\sigma(k)=\sigma(k,A)=\begin{cases}\sum_{i>m,j<k}a_{i,j},&\text{ if }k\leq m;\\
\sum_{i\leq
m,j>k}a_{i,j}+\sum_{i>m,j\leq
m}a_{i,j},&\text{ if }k\geq m+1.
\end{cases}
\end{equation}
So, for $k\leq m$, $\sigma(k)$ is the entry sum of the submatrix with the upper right corner $(m+1, k-1)$,
while, for $k\geq m+1$, $\sigma(k)$ is the entry sum of the left bottom $n\times m$ submatrix and the submatrix with the lower left corner $(m,k+1)$.

 Propositions
\ref{integral basis multiplication} for $p=1$ and \ref{integral basis multiplication $h=m$} can now be combined and extended as follows.

\begin{proposition}\label{h neq m}
Maintain the notation introduced above. For given  $h\in[1,m+n)$ and $A=(a_{k,l})\in M(m|n)^{\pm}$, the following multiplication formulas of homogeneous elements hold in $\bsS(m|n,r)$ for all $r\geq0$:
\begin{equation*}
\begin{aligned}
(1)\;\;E_{h,h+1}&(\mathbf{0},r)A(\bsj,r)\\
& =\sum_{k<h,a_{h+1,k}\geq
1}(-1)^{\vep\sigma(k)}\up_h^{f(k)}\overline{[\![a_{h,k}+1]\!]}_{\up_h^2}(A+E_{h,k}-E_{h+1,k})(\bsj+\alpha_h,r)\\
&\sum_{k>h+1,a_{h+1,k}\geq
1}(-1)^{\vep\sigma(k)}\up_h^{f(k)}\overline{[\![a_{h,k}+1]\!]}_{\up_h^2}(A+E_{h,k}-E_{h+1,k})(\bsj,r)\\
&+(-1)^{\vep\sigma(h+1)}\up_h^{f(h+1)+(-1)^{\vep}j_{h+1}}\overline{[\![a_{h,h+1}+1]\!]}_{\up_h^2}(A+E_{h,h+1})(\bsj,r)\\
&+(-1)^{\vep\sigma(h)}\up_h^{f(h)-j_h-1}\frac{(A-E_{h+1,h})(\bsj+\alpha_h,r)-(A-E_{h+1,h})(\bsj+\beta_h,r)}{1-\up_h^{-2}};\\\end{aligned}
\end{equation*}
\begin{equation*}
\begin{aligned}
(2)\;\;&E_{h+1,h}(\mathbf{0},r)A(\bsj,r)\\
&=\sum_{k<h,a_{h,k}\geq
1}(-1)^{\vep\sigma(k)}\up_{h+1}^{f'(k)}\overline{[\![a_{h+1,k}+1]\!]}_{\up_{h+1}^2}(A-E_{h,k}+E_{h+1,k})(\bsj,r)\\
&+\sum_{i>h+1,a_{h,k}\geq
1}(-1)^{\vep\sigma(k)}\up_{h+1}^{f'(k)}\overline{[\![a_{h+1,k}+1]\!]}_{\up_{h+1}^2}(A-E_{h,k}+E_{h+1,k})(\bsj-\alpha_h,r)\\
&+(-1)^{\vep\sigma(h)}\up_{h+1}^{f'(h)+(-1)^\vep j_h}\overline{[\![a_{h+1,h}+1]\!]}_{\up_{h+1}^2}(A+E_{h+1,h})(\bsj,r)\\
&+(-1)^{\vep\sigma(h+1)}\up_{h+1}^{f'(h+1)-j_{h+1}-1}\frac{(A-E_{h,h+1})(\bsj-\alpha_h,r)-(A-E_{h,h+1})(\bsj+\beta_h,r)}{1-\up_{h+1}^{-2}}.
\end{aligned}
\end{equation*}
Moreover, summing  over all $r\geq0$ (i.e.~removing $r$ throughout)  gives the corresponding formulas in $\bsS(m|n)$. In particular, $\fA(m|n)$ is closed under multiplication by $E_{h,h+1}(\mathbf{0})$ and $E_{h+1,h}(\mathbf{0})$.
\end{proposition}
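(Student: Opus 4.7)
The plan is to expand $E_{h,h+1}(\mathbf{0},r)A(\bsj,r)$ via the definitions of the two factors and then collapse the result back into the form $A'(\bsj',r)$ using the key formulas from the previous section. Expanding gives $\sum_{\mu,\lambda}(-1)^{\overline{A+\lambda}}\up^{\lambda\centerdot\bsj}\xib_{E_{h,h+1}+\mu}\xib_{A+\lambda}$, and the orthogonality in Lemma~\ref{norm basis}(1) forces $\mu=\ro(A)+\lambda-\bse_{h+1}$, so that $E_{h,h+1}+\mu$ is exactly the matrix $U_1(h,\ro(A)+\lambda)$ of~\eqref{$U_p,L_p$}. Applying Proposition~\ref{integral basis multiplication}(1) or~\ref{integral basis multiplication $h=m$}(1) with $A+\lambda$ in place of $A$ then rewrites the remaining product as a sum over $k\in[1,m+n]$ with $(A+\lambda)_{h+1,k}\geq 1$.

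I then split the $k$-sum according to whether $k<h$, $k>h+1$, $k=h+1$, or $k=h$. In the first two cases the diagonal entries $\lambda_h,\lambda_{h+1}$ of $A+\lambda$ are untouched, so $A'=A+E_{h,k}-E_{h+1,k}$ lies in $M(m|n)^\pm$; the only $\lambda$-dependence of the coefficient sits inside $\up_h^{f_h(\bse_k,A+\lambda)}$, which produces an extra factor $\up_h^{\lambda_h-(-1)^\vep\lambda_{h+1}}=\up^{\lambda\centerdot\alpha_h}$ for $k<h$ (realising the shift $\bsj\mapsto\bsj+\alpha_h$) and nothing for $k>h+1$. For $k=h+1$ the condition becomes $\lambda_{h+1}\geq 1$, and the reindexing $\lambda\mapsto\lambda+\bse_{h+1}$ yields the constant prefactor $\up^{\bse_{h+1}\centerdot\bsj}=\up_h^{(-1)^\vep j_{h+1}}$, using $(-1)^{\widehat{h+1}}=(-1)^{\hat h+\vep}$. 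Sign changes in $(-1)^{\overline{\cdot}}$ across these operations are controlled by Lemma~\ref{Abar}: for $h\neq m$ the sign is preserved, and for $h=m$ Lemma~\ref{Abar}(3) produces exactly the $(-1)^{\vep\sigma(k)}$ corrections.

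The main obstacle is the case $k=h$, which is where I expect most of the work to go. There the target matrix has diagonal $\lambda_h+1$ at position $h$, so I reindex by $\mu=\lambda+\bse_h$ (with $\mu_h\geq 1$) and set $A'=A-E_{h+1,h}\in M(m|n)^\pm$. The coefficient is $\up_h^{f(h)+\lambda_h-(-1)^\vep\lambda_{h+1}}\,\overline{[\![\lambda_h+1]\!]}_{\up_h^2}$, and after unpacking via $\overline{[\![\mu_h]\!]}_{\up_h^2}=(\up_h^{\mu_h}-\up_h^{-\mu_h})/(\up_h-\up_h^{-1})$ one needs the algebraic identity
\begin{equation*}
\up^{\mu\centerdot\alpha_h}-\up^{\mu\centerdot\beta_h}=(\up_h-\up_h^{-1})\,\up^{-(-1)^{\widehat{h+1}}\mu_{h+1}}\,\overline{[\![\mu_h]\!]}_{\up_h^2},
\end{equation*}
which, combined with $(\up_h-\up_h^{-1})/(1-\up_h^{-2})=\up_h$, collapses the whole contribution to $\up_h^{f(h)-j_h-1}\bigl(A'(\bsj+\alpha_h,r)-A'(\bsj+\beta_h,r)\bigr)/(1-\up_h^{-2})$. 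The crucial virtue of this form is that the $\mu_h=0$ terms cancel in the difference, so the a priori restriction $\mu_h\geq 1$ is automatically respected by the full-range sum. Tracking every $\up$-power to land on the exact exponent $f(h)-j_h-1$ is the most delicate piece of bookkeeping.

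Formula~(2) then follows by the symmetric argument using Proposition~\ref{integral basis multiplication}(2) and~\ref{integral basis multiplication $h=m$}(2) (exchanging the roles of $h$ and $h+1$), or alternatively by applying the anti-involution $\tau$ of Lemma~\ref{norm basis}(2). Since every step is uniform in $r$, summing over $r\geq 0$ produces the corresponding identities in $\bsS(m|n)$. Each output matrix $A+E_{h,k}-E_{h+1,k}$, $A-E_{h+1,h}$, etc.\ either lies in $M(m|n)^\pm$ or is killed by the convention $\xib_X=0$ for $X\notin M(m|n,r)$, so $\fA(m|n)$ is visibly closed under left multiplication by $E_{h,h+1}(\mathbf{0})$ and $E_{h+1,h}(\mathbf{0})$, as claimed.
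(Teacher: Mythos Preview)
Your proposal is correct and follows essentially the same route as the paper: expand the product, use the orthogonality from Lemma~\ref{norm basis}(1) to pin down $\mu=\ro(A)+\lambda-\bse_{h+1}$, apply Propositions~\ref{integral basis multiplication} and~\ref{integral basis multiplication $h=m$} to $A+\lambda$, split the resulting $k$-sum into the four ranges, invoke Lemma~\ref{Abar} for the sign bookkeeping, and collapse the $k=h$ piece via the Gaussian-number identity. One small slip: what you wrote as $\overline{[\![\mu_h]\!]}_{\up_h^2}=(\up_h^{\mu_h}-\up_h^{-\mu_h})/(\up_h-\up_h^{-1})$ is actually the symmetric quantum integer $[\mu_h]_{\up_h}$; the bar-version is $\overline{[\![\mu_h]\!]}_{\up_h^2}=(1-\up_h^{-2\mu_h})/(1-\up_h^{-2})$, differing by a factor of $\up_h^{\mu_h-1}$, but this does not affect your argument once the exponents are tracked through.
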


\begin{proof} If $|A|>r$, then there is nothing to prove.\footnote{If $|A|=r+1$, then $|A-E_{h+1,h}|=r$ and so $(A-E_{h+1,h})(\bsj)=\xib_{A-E_{h+1,h}}$. Thus, the last summand in the RHS of (1) is also zero.}

We now only prove (1); the proof of (2) is symmetric. First, we prove the $h\neq m$ case. In this case, the sign $(-1)^{\vep\sigma(h)}$ and $(-1)^\vep$ become 1.

$$\aligned
E_{h,h+1}(&\mathbf{0},r)A(\bsj,r)=\sum_{\lambda\in\Lambda(m|n,r-1)}\xi_{E_{h,h+1}+\lambda}\sum_{\mu\in\Lambda(m|n,r-|A|)}(-1)^{\overline{A+\mu}}\up^{\mu\centerdot
\bsj}\xi_{A+\mu}\\
&=\sum_{\mu\in\Lambda(m|n,r-|A|)}(-1)^{\overline{A+\mu}}\up^{\mu\centerdot
\bsj}\xib_{E_{h,h+1}+\ro(A)+\mu-\bse_{h+1}}\xib_{A+\mu}
\text{ (by Lem~\ref{norm basis}(1))}\\
&=\sum_{\mu\in\Lambda(m|n,r-|A|)}\sum_{\substack{k\in[1,m+n]\\a^\mu_{h+1,k}\geq
1}} \up_h^{f_h(\bse_k,A+\mu)}\overline{[\![a^\mu_{h,k}+1]\!]}_{\up_h^2}
  (-1)^{\overline{A+\mu}}\up^{\mu\centerdot
\bsj}\xib_{A+E_{h,k}-E_{h+1,k}+\mu}\\
&\qquad(\text{by Proposition
\ref{integral basis multiplication}, where $A+\mu=(a^\mu_{i,j})$}).\\
\endaligned
$$
By Lemma \ref{Abar}, for $h\neq m$,
$\overline{A+E_{h,k}-E_{h+1,k}+\mu}=\overline{A+\mu}$. Note also that $\up_h^{\mu_h-\mu_{h+1}}=\up^{\mu\centerdot\alpha_h}$, $\up_h^{-\mu_h-\mu_{h+1}}=\up^{\mu\centerdot\beta_h}$, and $a_{j,k}^\mu=a_{j,k}$ whenever
$j\neq k$. Moreover, by definition,
$f_h(\bse_k,A+\mu)=\sum_{j\geq
k}a^\mu_{h,j}-\sum_{j>k}a^\mu_{h+1,j}$ for $h\neq m$. Thus,
\begin{equation}\label{beta_h}
f_h(\bse_k,A+\mu)=\begin{cases}f_h(\bse_k,A),&\text{ if }k>h;\\
f_h(\bse_k,A)+\mu_h-\mu_{h+1},&\text{ if }k\leq h.\end{cases}
\end{equation}
Hence,
\begin{equation*}
\begin{aligned}
\text{LHS}&=\sum_{k>h+1,a_{h+1,k}\geq
1}\up_h^{f(k)}\overline{[\![a_{h,k}+1]\!]}_{\up_h^2}(A+E_{h,k}-E_{h+1,k})(\bsj,r)\\
&\quad\,+\sum_{k<h,a_{h+1,k}\geq
1}\up_h^{f(k)}\overline{[\![a_{h,k}+1]\!]}_{\up_h^2}(A+E_{h,k}-E_{h+1,k})(\bsj+\alpha_h,r)\\
&\quad\,+Y_h+Y_{h+1},
\end{aligned}
\end{equation*}
where, noting $a_{i,i}=0$,
$$\aligned
Y_{h+1}&=\sum_{\substack{\mu\in\Lambda(m|n,r-|A|)\\\mu_{h+1}\geq1}}(-1)^{\overline{A+\mu}}\up^{\mu\centerdot\bsj}
\up_h^{f(h+1)}\overline{[\![a_{h,h+1}+1]\!]}_{\up_h^2}\xi_{A+E_{h,h+1}-E_{h+1,h+1}+\mu}\\
&=\up_h^{f(h+1)+j_{h+1}}\overline{[\![a_{h,h+1}+1]\!]}_{\up_h^2}
\sum_{\mu'\in\Lambda(m|n,r-|A|-1)}(-1)^{\overline{A+\mu}}\up^{\mu'\centerdot\bsj}
\xi_{A+E_{h,h+1}+\mu'}\\
&=\up_h^{f(h+1)+j_{h+1}}\overline{[\![a_{h,h+1}+1]\!]}_{\up_h^2}(A+E_{h,h+1})(\bsj,r),
\endaligned$$
and, if $a_{h+1,h}\geq1$,
$$\aligned
Y_h&=\sum_{\mu\in\Lambda(m|n,r-|A|)}(-1)^{\overline{A+\mu}}\up^{\mu\centerdot\bsj}\up_h^{f(h)+\mu_h-\mu_{h+1}}\overline{[\![a_{h,h}+\mu_h+1]\!]}_{\up_h^2}\xib_{A+E_{h,h}-E_{h+1,h}+\mu}\\
&=\sum_{\mu\in\Lambda(m|n,r-|A|)}(-1)^{\overline{A+\mu}}\up^{\mu\centerdot\bsj}\up_h^{f(h)+\mu_h-\mu_{h+1}}\overline{[\![\mu_h+1]\!]}_{\up_h^2}\xib_{A+E_{h,h}-E_{h+1,h}+\mu}\\
&=\up_h^{f(h)}
\sum_{\mu\in\Lambda(m|n,r-|A|)}(-1)^{\overline{A+\mu}}\up^{\mu\centerdot\bsj}
\frac{\up_h^{\mu_h-\mu_{h+1}}-\up_h^{-\mu_h-\mu_{h+1}-2}}{1-\up_h^{-2}}\xib_{A-E_{h+1,h}+\mu+\bse_h}\\
&=\up_h^{f(h)-j_h-1}\sum_{\mu\in\Lambda(m|n,r-|A|)}(-1)^{\overline{A+\mu}}\frac{\up^{(\mu+\bse_h)\centerdot(\bsj+\alpha_h)}-\up^{(\mu+\bse_h)\centerdot(\bsj+\beta_h)}}{1-\up_h^{-2}}\xib_{A-E_{h+1,h}+\mu+\bse_h}\\
&=\up^{f(h)-j_h-1}\frac{(A-E_{h+1,h})(\bsj+\alpha_h,r)-(A-E_{h+1,h})(\bsj+\beta_h,r)}{1-\up_h^{-2}},
\endaligned$$
proving (1) for $h\neq m$.

We now prove the $h=m$ case. By definition,
\begin{equation*}
\begin{aligned} E_{m,m+1}(&\mathbf{0},r)A(\bsj,r)=\sum_{\lambda\in\Lambda(m|n,r)}\xi_{E_{m,m+1}+\lambda}\sum_{\mu\in\Lambda(m|n,r-|A|)}(-1)^{\overline{A+\mu}}\up^{\mu\centerdot
\bsj}\xi_{A+\mu}\\
&=\sum_{\mu\in\Lambda(m|n,r-|A|)}(-1)^{\overline{A+\mu}}\up^{\mu\centerdot\bsj}\xib_{E_{m,m+1}+\ro(A)+\mu-\co(E_{m,m+1})}\xib_{A+\mu}\\
&=\sum_{\mu\in\Lambda(m|n,r-|A|)}\sum_{\substack{k\in[1,m+n]\\a^\mu_{m+1,k}\geq
1}}(-1)^{\overline{A+\mu}}\up^{\mu\centerdot\bsj}(-1)^{\sum_{i>m,j<k}a^\mu_{i,j}}\up^{f_m(\bse_k,A+\mu)}\\
&\quad\cdot\overline{[\![a^\mu_{m,k}+1]\!]}_{\up^2}\xib_{A+E_{m,k}-E_{m+1,k}+\mu}\; (\text{by Proposition \ref{integral basis multiplication $h=m$}, where $A+\mu=(a^\mu_{i,j})$}).
\end{aligned}
\end{equation*}
Now, by \eqref{beta_m gamma_m}, \eqref{beta_h} becomes, for $h=m$,
\begin{equation}\label{beta_m}
f_m(\bse_k,A+\mu)=\begin{cases}f_m(\bse_k,A)+\mu_m+\mu_{m+1},&\text{ if }k\leq m;\\
f_m(\bse_k,A),&\text{ if }k>m.\\
\end{cases}
\end{equation}

If $k<m$ and $a_{m+1,k}=a_{m+1,k}^\mu\geq 1$, then
$\overline{A+\mu}=\overline{A+E_{m,k}-E_{m+1,k}+\mu}$ by Lemma \ref{Abar} and, by \eqref{dot product}, $\mu\centerdot\alpha_{m}=\mu_m+\mu_{m+1}$. Thus, in this case, the term

\begin{equation*}
\begin{aligned}
&(-1)^{\overline{A+\mu}}\up^{\mu\centerdot\bsj}(-1)^{\sum_{i>m,j<k}a^\mu_{i,j}}\up^{f_m(\bse_k,A)}\overline{[\![a^\mu_{m,k}+1]\!]}_{\up^2}\xib_{A+E_{m,k}-E_{m+1,k}+\mu}\\
&=(-1)^{\overline{A+\mu}}\up^{\mu\centerdot\bsj}(-1)^{\sum_{i>m,j<k}a_{i,j}}\up^{f(k)+\mu_m+\mu_{m+1}}\overline{[\![a_{m,k}+1]\!]}_{\up^2}\xib_{A+E_{m,k}-E_{m+1,k}+\mu}\\
&=(-1)^{\sigma(k)}\up^{f(k)}\overline{[\![a_{m,k}+1]\!]}_{\up^2}(-1)^{\overline{A+E_{m,k}-E_{m+1,k}+\mu}}\up^{\mu\centerdot(\bsj+\alpha_m)}\xib_{A+E_{m,k}-E_{m+1,k}+\mu}.
\end{aligned}
\end{equation*}
Substituting gives the first sum in the formula.

If $k=m$ and $a_{m+1,m}\geq1$, then we have
\begin{equation*}
\begin{aligned}
&(-1)^{\overline{A+\mu}}\up^{\mu\centerdot\bsj}(-1)^{\sum_{i>m,j<m}a^\mu_{i,j}}\up^{f_m(\bse_m,A)}\overline{[\![a^\mu_{m,m}+1]\!]}_{\up^2}\xib_{A+E_{m,m}-E_{m+1,m}+\mu}\\
&=\up^{\mu\centerdot\bsj}(-1)^{\overline{A+\mu}}(-1)^{\sum_{i>m,j<m}a_{i,j}}\up^{f(m)+\mu_m+\mu_{m+1}}\overline{[\![\mu_m+1]\!]}_{\up^2}\xib_{A+E_{m,m}-E_{m+1,m}+\mu}\\
&=(-1)^{\sum_{i>m,j<m}a_{i,j}}(-1)^{\overline{A+\mu}}
\up^{\mu\centerdot\bsj}\up^{f(m)+\mu_m+\mu_{m+1}}\overline{[\![\mu_m+1]\!]}_{\up^2}\xib_{A+E_{m,m}-E_{m+1,m}+\mu}\\
&=(-1)^{\sigma(m)}\up^{f(m)-j_m-1}(-1)^{\overline{A+\mu}}\frac{\up^{(\mu+\bse_m)\centerdot(\bsj+\alpha_m)}-\up^{(\mu+\bse_m)\centerdot{(\bsj+\beta_m)}}}{1-\up^{-2}}\xib_{A-E_{m+1,m}+\mu+\bse_m}.
\end{aligned}
\end{equation*}
Applying Lemma \ref{Abar} and substituting give the last term in the formula.

If $k>m+1$ and $a_{m+1,k}\geq1$ then, by Lemma \ref{Abar}, $\overline{A+E_{m,k}-E_{m+1,k}+\mu}=\overline{A+\mu}+\sum_{i>m,m<j<k}a^\mu_{i,j}-\sum_{i\leq
m,j>k}a^\mu_{i,j}$, and

\begin{equation*}
\begin{aligned}
&\up^{\mu\centerdot\bsj}(-1)^{\overline{A+\mu}}(-1)^{\sum_{i>m,j<k}a^\mu_{i,j}}\up^{f_m(\bse_k,A)}\overline{[\![a^\mu_{m,k}+1]\!]}_{\up^2}\xib_{A+E_{m,k}-E_{m+1,k}+\mu}\\
&=\up^{\mu\centerdot\bsj}(-1)^{\sigma(k)}(-1)^{\overline{A+E_{m,k}-E_{m+1,k}+\mu}}\up^{f(k)}\overline{[\![a_{m,k}+1]\!]}_{\up^2}\xib_{A+E_{m,k}-E_{m+1,k}+\mu}\\
&=(-1)^{\sigma(k)}\up^{f(k)}\overline{[\![a_{m,k}+1]\!]}_{\up^2}(-1)^{\overline{A+E_{m,k}-E_{m+1,k}+\mu}}\up^{\mu\centerdot\bsj}\xib_{A+E_{m,k}-E_{m+1,k}+\mu}.
\end{aligned}
\end{equation*}
So substituting gives the second sum in the formula.

Finally, if $k=m+1$ and $\mu_{m+1}=a^\mu_{m+1,m+1}\geq1$, by Lemma \ref{Abar},
$\overline{A+\mu}=\overline{A+E_{m,m+1}+\mu-\bse_{m+1}}+\sum_{i\leq
m,j>m+1}a_{i,j}$ and the corresponding term becomes
\begin{equation*}
\begin{aligned}
&\up^{\mu\centerdot\bsj}(-1)^{\overline{A+\mu}}(-1)^{\sum_{i>m,j<m+1}a^\mu_{m+1,j}}\up^{f_m(\bse_{m+1},A)}\overline{[\![a^\mu_{m,m+1}+1]\!]}_{\up^2}\xib_{A+E_{m,m+1}-E_{m+1,m+1}+\mu}\\
=&(-1)^{\sigma(m+1)}\up^{f(m+1)-j_{m+1}}\overline{[\![a_{m,m+1}+1]\!]}_{\up^2}(-1)^{\overline{A+E_{m,m+1}+\mu-\bse_{m+1}}}\up^{(\mu-\bse_{m+1})\centerdot\bsj}\xib_{A+E_{m,m+1}+\mu-\bse_{m+1}}.
\end{aligned}
\end{equation*}
Now substituting gives the second last term.
\end{proof}

\section{Generators and relations for the quantum supergroup $\bfU(\mathfrak{gl}_{m|n})$}

We now use the formulas in $\bsS(m|n)$ given in Propositions \ref{multiply O} and \ref{h neq m} to derive the defining relations for the quantum supergroup $\bfU(\mathfrak{gl}_{m|n})$ inside $\bsS(m|n)$.
First, we recall the definition from \cite{Z}. Recall also the definition of the super commutator on homogeneous elements of a supernalgebra with parity function $\hat\  \,$:
$$[X,Y]=XY-(-1)^{\hat X\hat Y}YX.$$
\begin{definition} The quantum enveloping superalgebra
  $\bfU(\mathfrak{gl}_{m|n})$ is the algebra over $\mathbb Q(\up)$ generated by
$$\sfK_a,\sfK_a^{-1}, \sfE_{h,h+1},\sfE_{h+1,h},\quad(1\leq a\leq m+n,1\leq h<m+n)$$
which satisfy the following relations:
\begin{itemize}
\item[(QS1)] $\sfK_a\sfK_a^{-1}=1,\sfK_a\sfK_b=\sfK_b\sfK_a;$\vspace{.1cm}

\item[(QS2)] $\sfK_a
\sfE_{h,h+1}=\up_a^{\delta_{a,h}-\delta_{a,h+1}}\sfE_{h,h+1}\sfK_a,
\sfK_a\sfE_{h+1,h}=\up_a^{-\delta_{a,h}+\delta_{a,h+1}}\sfE_{h+1,h}\sfK_a;$\vspace{.1cm}

\item[(QS3)]
$[\sfE_{h,h+1},\sfE_{k+1,k}]=\delta_{h,k}\frac{\sfK_{h}\sfK_{h+1}^{-1}-\sfK_{h}^{-1}\sfK_{h+1}}{\up_h-\up_h^{-1}};$\vspace{.1cm}

\item[(QS4)] $\sfE_{h,h+1}\sfE_{k,k+1}=\sfE_{k,k+1}\sfE_{h,h+1},\;\sfE_{h+1,h}\sfE_{k+1,k}=\sfE_{k+1,k}\sfE_{h+1,h},$
 where $|k-h|>1;$

\item[(QS5)] For $h\neq m$,
$$\aligned&\sfE_{h,h+1}^2\sfE_{h+1,h+2}-(\up_h+\up_h^{-1})\sfE_{h,h+1}\sfE_{h+1,h+2}\sfE_{h,h+1}+\sfE_{h+1,h+2}\sfE_{h,h+1}^2=0,\\
&\sfE^2_{h,h+1}\sfE_{h-1,h}-(\up_h+\up_h^{-1})\sfE_{h,h+1}\sfE_{h-1,h}\sfE_{h,h+1}+\sfE_{h-1,h}\sfE^2_{h,h+1}=0,\\
&\sfE_{h+1,h}^2\sfE_{h+2,h+1}-(\up_h+\up_h^{-1})\sfE_{h+1,h}\sfE_{h+2,h+1}\sfE_{h+1,h}+\sfE_{h+2,h+1}\sfE_{h+1,h}^2=0,\\
&\sfE_{h+1,h}^2\sfE_{h,h-1}-(\up_h+\up_h^{-1})\sfE_{h+1,h}\sfE_{h,h-1}\sfE_{h+1,h}+\sfE_{h,h-1}\sfE_{h+1,h}^2=0;
\endaligned$$
\item[(QS6)] $\sfE_{m,m+1}^2=\sfE_{m+1,m}^2=0$ and
$[\sfE_{m,m+1},\sfE_{m-1,m+2}]=[\sfE_{m+1,m},\sfE_{m+2,m-1}]=0.$
\end{itemize}
Here, only $\sfE_{m,m+1},\sfE_{m+1,m}$ are odd generators and others are even generators.
Moreover, the quantum root vectors $\sfE_{a,b}$,
for $a,b\in[1,m+n]$ with $|a-b|>1$, is defined recursively as follows
\begin{equation}\label{qbrackets}
\sfE_{a,b}=\left\{\begin{aligned}
&\sfE_{a,c}\sfE_{c,b}-\up_c\sfE_{c,b}\sfE_{a,c}, \mbox{  if } a>b;\\
&\sfE_{a,c}\sfE_{c,b}-\up^{-1}_c\sfE_{c,b}\sfE_{a,c}, \mbox{  if } a<b,
\end{aligned}
\right.
\end{equation}
where $c$ can be taken to be an arbitrary index strictly between $a$ and $b$, and $\sfE_{a,b}$ is homogeneous of degree $\hat E_{a,b}:=\hat{a}+\hat{b}$ .
\end{definition}
By the quantum Schur-Weyl duality (\cite[Thm.~4.4]{Mi}, \cite[\S8]{DR}, \cite[\S4]{TK}),
it is known that  there are algebra homomorphisms from $\bfU(\mathfrak{gl}_{m|n})$ to $\bsS(m|n,r)$ for all $r\geq0$. These homomorphisms induce an algebra homomorphism  from $\bfU(\mathfrak{gl}_{m|n})$ to the direct product $\bsS(m|n)$.
We now use the multiplication formulas given in Propositions \ref{multiply O} and \ref{h neq m} to explicitly define a $\mathbb Q(\up)$-algebra homomorphism
$\eta:\bfU(\mathfrak{gl}_{m|n})\longrightarrow \bsS(m|n)$.

Let $M(m|n)^+$(resp. $M(m|n)^-$) be the subset of $M(m|n)$
consisting of those matrices $(a_{i,j})$ with $a_{i,j}=0$, for all
$i\geq j$ (resp. $i\leq j$). 
Recall also the symmetric Gaussian numbers
$[k]_{\up_h}=\frac{\up_h^{k}-\up_h^{-k}}{\up_h-\up_h^{-1}}$.

For the convenience of computation below, we list some special cases of Proposition \ref{h neq m}.

\begin{lemma}\label{integral generators} For $1\leq h<m+n$, let $\vep=\widehat h+\widehat{h+1}$ and, for $A^+=(a_{i,j})\in M(m|n)^+$ and $A^-=(a_{i,j})\in M(m|n)^-$, let (see \eqref{sigma(k)})
$$\sigma^+(k):=\sigma(k,A^+)=\sum_{i\leq m,j>k}a_{i,j}\;\text{ and }\;\sigma^-(k)=\sigma(k,A^-)=\sum_{i\geq m+1,j<k}a_{i,j}.$$
Then the following multiplication formulas hold in $\bsS(m|n)$:
\begin{equation*}
\begin{aligned}
\!\!(1)\; E_{h,h+1}(\mathbf{0})A^+(\mathbf{0})&=(-1)^{\vep\sigma^+(h+1)}\up_h^{f(h+1)}\overline{[\![a_{h,h+1}+1]\!]}_{\up^2_h}(A^++E_{h,h+1})(\mathbf{0})\\
&\!\!\!\!\!+\sum_{k>h+1,a_{h+1,k}\geq
1}(-1)^{\vep\sigma^+(k)}\up_h^{f(k)}\overline{[\![a_{h,k}+1]\!]}_{\up^2_h}(A^++E_{h,k}-E_{h+1,k})(\mathbf{0});\\
(2)\;E_{h+1,h}(\mathbf{0})A^-(\mathbf{0})&=(-1)^{\vep\sigma^-(h)}\up_{h+1}^{f'(h)}\overline{[\![a_{h+1,h}+1]\!]}_{\up^2_{h+1}}(A^-+E_{h+1,h})(\mathbf{0})\\
&\sum_{k<h,a_{h,k}\geq
1}(-1)^{\vep\sigma^-(k)}\up_{h+1}^{f'(k)}\overline{[\![a_{h+1,k}+1]\!]}_{\up^2_{h+1}}(A^--E_{h,k}+E_{h+1,k})(\mathbf{0}).
\end{aligned}
\end{equation*}
In particular,
for $k\geq0$, we have
\begin{equation*}
\begin{aligned}
(3)\;&E_{h,h+1}(\mathbf{0})^k=[k]^!_{\up_h}(kE_{h,h+1})(\mathbf{0})\;\text{ and } \;E_{h+1,h}(\mathbf{0})^k=[k]^!_{\up_{h+1}}(kE_{h+1,h})(\mathbf{0}),
&\mbox{ if } h\neq m;\\
(4)\;&E_{m,m+1}(\mathbf{0})^k=0\;\text{ and }\; E_{m+1,m}(\mathbf{0})^k=0,&\!\!\!\mbox{ if } k>1.\\
\end{aligned}
\end{equation*}
\end{lemma}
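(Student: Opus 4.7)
The plan is to deduce each of (1)--(4) directly from Proposition~\ref{h neq m} (using Proposition~\ref{multiply O} where needed to interpret the resulting elements), exploiting the shape restrictions on $A^+,A^-$ and the zero convention for matrices outside $M(m|n,r)$ recalled just after \eqref{Ajr}.

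For (1), I would specialise Proposition~\ref{h neq m}(1) to $A=A^+$ and $\bsj=\mathbf{0}$ and identify which of the four kinds of summand survive. The first sum $\sum_{k<h,a_{h+1,k}\ge1}$ is empty because $k<h<h+1$ and $A^+$ is strictly upper triangular. The fourth summand, built from $(A^+-E_{h+1,h})(\,\cdot\,,r)$, vanishes by the zero convention since $a_{h+1,h}=0$. The $k=h+1$ term simplifies because the $(-1)^{\vep}j_{h+1}$ correction disappears when $\bsj=\mathbf{0}$. The remaining pieces match the displayed formula once one confirms $\sigma(k,A^+)=\sigma^+(k)$ at the indices where $(-1)^{\vep\sigma(k)}$ actually matters. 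For $h\neq m$ this is vacuous since $\vep=\hat h+\widehat{h+1}=0$; for $h=m$ every surviving index satisfies $k\ge m+1$, and strict upper triangularity kills the $\sum_{i>m,\,j\le m}a_{i,j}$ piece in the definition of $\sigma$, collapsing it to $\sigma^+$. Statement (2) then follows by the entirely symmetric specialisation of Proposition~\ref{h neq m}(2).

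For (3), I would induct on $k$ using (1), the cases $k=0,1$ being immediate. Applying (1) to $A^+=(k-1)E_{h,h+1}$ leaves only the $k'=h+1$ term, with $\sigma^+(h+1)=0$ and $f(h+1)=k-1$, and the normalisation identity $\up_h^{k-1}\overline{[\![k]\!]}_{\up_h^2}=[k]_{\up_h}$ turns the coefficient into $[k]_{\up_h}$, upgrading $[k-1]^!_{\up_h}$ to $[k]^!_{\up_h}$. The formula for $E_{h+1,h}(\mathbf{0})^k$ is the analogous specialisation of (2). For (4), applying (1) to $A^+=E_{m,m+1}$ gives $E_{m,m+1}(\mathbf{0})^2=c\cdot(2E_{m,m+1})(\mathbf{0})$ for some scalar $c$; but $2E_{m,m+1}$ has entry $2$ in the off-diagonal $A_{12}$ block, so it is excluded from $M(m|n)$ and the product vanishes. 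The analogous argument with (2) disposes of $E_{m+1,m}(\mathbf{0})^2$, and all higher powers are then automatically zero.

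I do not foresee a genuine obstacle; the whole argument is careful bookkeeping on top of Proposition~\ref{h neq m}. The only mild subtlety is tracking the sign $(-1)^{\vep\sigma(k)}$ in the $h=m$ case and verifying that $\sigma$ reduces to $\sigma^\pm$ on exactly those indices that contribute, which is what the shape of $A^\pm$ arranges.
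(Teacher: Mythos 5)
Your proposal is correct and follows essentially the same route as the paper: (1) and (2) are obtained by specialising Proposition \ref{h neq m} to $\bsj=\mathbf 0$ and to the triangular shapes of $A^\pm$ (with the vanishing of the empty sums and of the terms involving matrices with a negative entry, and the reduction of $\sigma$ to $\sigma^\pm$ when $h=m$), and (3) is the same induction via the identity $\up_h^{k-1}\overline{[\![k]\!]}_{\up_h^2}=[k]_{\up_h}$. The only cosmetic difference is that the paper attributes (4) to Theorem \ref{U_i and L_i}, whereas you deduce it from (1) and the convention that $A(\bsj)=0$ when an entry of the $m\times n$ block exceeds $1$; both are valid.
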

\begin{proof} Formulas (1) and (2) follows from Proposition \ref{h neq m} and (4) was seen from Theorem \ref{U_i and L_i}.
We prove (3) by induction on $k$. By (1),
\begin{equation*}
E_{h,h+1}(\mathbf{0})E_{h,h+1}(\mathbf{0})=\up_h\overline{[\![2]\!]}_{\up^2_h}(2E_{h,h+1})(\mathbf{0})=[2]_{\up_h}(2E_{h,h+1})(\mathbf{0}).
\end{equation*}
By induction and (1),
\begin{equation*}
\begin{aligned}
(E_{h,h+1}(\mathbf{0}))^{k+1}&=[k]^!_{\up_h}E_{h,h+1}(\mathbf{0})(kE_{h,h+1})(\mathbf{0})\\
&=[k]^!_{\up_h}\up_h^{k}\overline{[\![k+1]\!]}_{\up_h^2}((k+1)E_{h,h+1})(\mathbf{0})\\\
&=[k+1]^!_{\up_h}((k+1)E_{h,h+1})(\mathbf{0}).
\end{aligned}
\end{equation*}
The second formula in (3) can be proved similarly by applying (2).
\end{proof}

Let
$$\ugk_i=O(\bse_i),\quad \ugk_i^{-1}=O(-\bse_i),\quad
E_{h}=E_{h,h+1}(\mathbf{0}),\quad \text{and}\quad F_h=E_{h+1,h}(\mathbf{0}).$$

\begin{theorem}\label{quantum serre relations} There is a $\mathbb Q(\up)$-algebra homomorphism
$$\eta:\bfU(\mathfrak{gl}_{m|n})\longrightarrow \bsS(m|n)$$
sending $\sfE_{h,h+1},\sfE_{h+1,h}$ and $\sfK_a^{\pm1}$ to $E_h,F_h$ and $\ugk_a^{\pm1}$, respectively.
\end{theorem}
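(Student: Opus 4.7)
The plan is to verify that the images $K_a^{\pm1},E_h,F_h$ in $\bsS(m|n)$ satisfy each of the defining relations (QS1)--(QS6). Every relation reduces to a direct computation using the multiplication formulas of Propositions~\ref{multiply O} and \ref{h neq m} (together with the special cases in Lemma~\ref{integral generators}), so the proof is essentially a bookkeeping exercise distributed over the six families.

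First I would dispose of the easy relations. For (QS1), applying Proposition~\ref{multiply O}(1) with $A=0$ gives $O(\bsj)O(\bsj')=O(\bsj+\bsj')$, from which $K_aK_a^{-1}=1$ and $K_aK_b=K_bK_a$ are immediate. For (QS2), apply Proposition~\ref{multiply O}(1),(2) with $A=E_{h,h+1}$ or $A=E_{h+1,h}$ and $\bsj'=\mathbf 0$; since $\ro(E_{h,h+1})=\bse_h$, $\co(E_{h,h+1})=\bse_{h+1}$, the signed dot product \eqref{dot product} yields exactly $\up_a^{\delta_{a,h}-\delta_{a,h+1}}$ on comparing $K_aE_h$ with $E_hK_a$, and similarly for $F_h$. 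For (QS4) with $|k-h|>1$, the matrices $E_{h,h+1}$ and $E_{k,k+1}$ have disjoint support; Proposition~\ref{h neq m}(1) (applied to $A=E_{k,k+1}$) produces only the summand $(E_{k,k+1}+E_{h,h+1})(\mathbf 0)$ with coefficient~$1$, and the same is obtained by interchanging $h\leftrightarrow k$, giving commutativity. The analogous argument handles $F_hF_k=F_kF_h$.

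Next I would tackle (QS3). Take $A=E_{h+1,h}$ in Proposition~\ref{h neq m}(1) with $\bsj=\mathbf 0$: only the index $k=h$ contributes in the last summand (since $a_{h+1,h}=1$), while the index $k=h+1$ gives $(E_{h,h+1}+E_{h+1,h})(\mathbf 0)$. Symmetrically compute $F_kE_h$ from Proposition~\ref{h neq m}(2). The off-diagonal contributions cancel when $h=k$ are subtracted (with the correct sign $(-1)^{\hat E_h\hat F_k}$ from the super commutator), leaving only the ``diagonal'' term. A short calculation using $f(h)-j_h-1$ versus $f'(h+1)-j_{h+1}-1$ identifies this remainder with $\frac{K_hK_{h+1}^{-1}-K_h^{-1}K_{h+1}}{\up_h-\up_h^{-1}}$, using that $K_h$ acts by $\up^{\la_h}$ and $K_{h+1}$ by $\up_{h+1}^{\la_{h+1}}=\up^{(-1)^{\hat{h+1}}\la_{h+1}}$ on the $\la$-weight space. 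For $h\neq k$ the vanishing is immediate from disjoint support.

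The main obstacle will be (QS5) and (QS6). For (QS5), which is the classical quantum Serre relation for an even simple root $h\neq m$, I would use Lemma~\ref{integral generators}(3) to write $E_h^2=[2]_{\up_h}(2E_{h,h+1})(\mathbf 0)$ and then compute $E_h^2E_{h+1,h+2}(\mathbf 0)$, $E_hE_{h+1,h+2}(\mathbf 0)E_h$, and $E_{h+1,h+2}(\mathbf 0)E_h^2$ via Lemma~\ref{integral generators}(1). All three products are supported on the matrices $(2E_{h,h+1}+E_{h+1,h+2})(\mathbf 0)$ and $(E_{h,h+1}+E_{h,h+2})(\mathbf 0)$; tracking the $\up_h$-powers $f(k)$ and the $\bsq$-binomials shows the combination vanishes as a quantum binomial identity. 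The three remaining Serre relations in (QS5) are treated symmetrically. For (QS6), Lemma~\ref{integral generators}(4) gives $E_m^2=F_m^2=0$ directly. The harder relations $[\sfE_{m,m+1},\sfE_{m-1,m+2}]=0$ and its companion require expanding the quantum root vector $\sfE_{m-1,m+2}$ via \eqref{qbrackets}, mapping it to a corresponding element of $\bsS(m|n)$, and carefully tracking the odd signs from Proposition~\ref{h neq m} together with $\sigma(k)$ in \eqref{sigma(k)}. The principal subtlety here is that applying the formula twice (for $h=m$ and for $h=m\pm1$) produces crossing terms whose signs depend on whether the intermediate entries lie in the $m\times n$ or $n\times m$ blocks, and one must verify these signs cancel between the two expansions of the super commutator. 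Once these sign cancellations are checked, the map $\eta$ extends uniquely to an algebra homomorphism by the universal property of $\bfU(\mathfrak{gl}_{m|n})$.
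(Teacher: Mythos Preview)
Your proposal is correct and follows essentially the same approach as the paper: verify (QS1)--(QS6) one by one using Propositions~\ref{multiply O} and \ref{h neq m} and the special cases in Lemma~\ref{integral generators}. The paper's treatment of (QS6) is more explicit---it writes out $\eta(\sfE_{m+2,m-1})$ as a four-term combination of $F_{m-1},F_m,F_{m+1}$ via \eqref{qbrackets}, computes each of the eight products $F_m\cdot(\text{word})$ and $(\text{word})\cdot F_m$ concretely using Lemma~\ref{integral generators}(2), and checks the cancellation directly---whereas your sketch leaves the sign-tracking as an exercise, but the method is the same.
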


\begin{proof} It is enough to show that $\eta$ preserves all relations (QS1)--(QS6).

\noindent
(QS1) By Proposition \ref{multiply O}, we have
$$\ugk_a\ugk_a^{-1}=O(\bse_a)O(-\bse_a)=O(\mathbf{0})=1,$$
and
$$\ugk_a\ugk_b=O(\bse_a)O(\bse_b)=O(\bse_{a+b})=O(\bse_b)O(\bse_a)=\ugk_b\ugk_a.$$

\noindent
(QS2) For $a\in [1,m+n]$, by Proposition \ref{multiply O},
we have
$$\aligned
\ugk_aE_h&=O(\bse_a)E_{h,h+1}(\mathbf{0})=\up^{\ro(E_{h,h+1})\centerdot
\bse_a}E_{h,h+1}(\bse_a)\\
E_h\ugk_a&=E_{h,h+1}(\mathbf{0})O(\bse_a)=\up^{\co(E_{h,h+1})\centerdot
\bse_a}E_{h,h+1}(\bse_a).
\endaligned$$ Since
$\co(E_{h,h+1})=\bse_{h+1}$ and
$\ro(E_{h,h+1})=\bse_h$, by \eqref{dot product},
$$\ugk_aE_h=\up^{(\bse_h-\bse_{h+1})\centerdot\bse_a}E_h\ugk_a=\up_a^{\delta_{a,h}-\delta_{a,h+1}}E_h\ugk_a.$$
Similarly, one proves the second formula.

\noindent
(QS3) Let $\hat{O}=0,\widehat{E}_{i,j}=\hat{i}+\hat{j}$.
By definition, for $1\leq h,k\leq m+n-1$,
$$[E_h,F_k]=E_hF_k-(-1)^{\widehat{E}_{h,h+1}\widehat{E}_{k+1,k}}F_kE_h.$$
(i) If $h=k\neq m$, by Proposition \ref{h neq m}, then
$$E_hF_h=(E_{h,h+1}+E_{h+1,h})(\mathbf{0})+\frac{O(\alpha_h)-O(\beta_h)}{\up_h-\up_h^{-1}}$$
and
$$F_hE_h=(E_{h,h+1}+E_{h+1,h})(\mathbf{0})+\frac{O(-\alpha_h)-O(\beta_h)}{\up_h-\up_h^{-1}}.$$

Since $\widehat{E}_{h,h+1}=0$,
$$[E_h,F_h]=\frac{O(\alpha_h)-O(-\alpha_h)}{\up_h-\up_h^{-1}}=
\frac{\ugk_h\ugk_{h+1}^{-1}-\ugk_h^{-1}\ugk_{h+1}}{\up_h-\up_h^{-1}}.$$
(ii) If $h=k=m$, then $\widehat{E}_{m,m+1}=\widehat{E}_{m+1,m}=1$.
By  Proposition \ref{h neq m},
$$E_mF_m=-(E_{m,m+1}+E_{m+1,m})(\mathbf{0})+\frac{O(\alpha_m)-O(\beta_m)}{\up-\up^{-1}}$$
and
$$F_m E_m=(E_{m,m+1}+E_{m+1,m})(\mathbf{0})-\frac{O(-\alpha_m)-O(\beta_m)}{\up-\up^{-1}}.$$
Hence,
$$\aligned[]
[ E_m,F_m]&= E_mF_m+F_m E_m\\
&=\frac{O(\alpha_m)-O(-\alpha_m)}{\up-\up^{-1}}=\frac{\ugk_m\ugk^{-1}_{m+1}-\ugk_m^{-1}\ugk_{m+1}}{\up-\up^{-1}}.
\endaligned$$
(iii) If $h\neq k$, then
$\widehat{E}_{h,h+1}\widehat{E}_{k+1,k}=0.$
By  Proposition \ref{h neq m},
$$E_hF_k=(E_{h,h+1}+E_{k+1,k})(\mathbf{0})=F_kE_h.$$
Thus
$$[E_h,F_k]=E_hF_k-F_kE_h=0.$$

\noindent
(QS4)
Similarly, for $|h-k|>1$, we have $\widehat{E}_{h+1,h}\widehat{E}_{k+1,k}=0$ and
$$F_hF_k=(E_{h+1,h}+E_{k+1,k})(\mathbf{0})=F_kF_h$$
and
$$E_h E_k=(E_{h,h+1}+E_{k,k+1})(\mathbf{0})= E_kE_h.$$

\noindent
(QS5) These are the quantum Serre relations when $h\neq m$. By applying Lemma \ref{integral generators}, the argument in the proof of \cite[Lem.~5.6]{BLM} carries over; see also the proof of \cite[Th.~13.33]{DDPW}. Note that, when $h=m-1$, there is no sign affecting the computation of $E_{m-1}E_mE_{m-1}$ or $E_mE_{m-1}^2$.

\noindent
(QS6) The relation $ E_m^2=0=F_m^2$ is given in Lemma \ref{integral generators}(4).
Finally, we prove
$$[\eta(\sfE_{m+2,m-1}),F_m]=\eta(\sfE_{m+2,m-1})F_m+F_m\eta(\sfE_{m+2,m-1})=0,$$
where, by \eqref{qbrackets},
$$\eta(\sfE_{m+2,m-1})= F_{m+1}F_m F_{m-1}-
\up^{-1}F_m F_{m+1} F_{m-1}-
\up F_{m-1} F_{m+1}F_m+ F_{m-1}F_m F_{m+1}.$$

Applying Lemma \ref{integral generators}(2) repeatedly yields
\begin{equation*}\label{(1)}
\aligned
(1) \qquad F_{m+1}F_m F_{m-1}F_m&= F_{m+1}F_m(E_{m,m-1}+E_{m+1,m})(\bf0)\\
&= F_{m+1}(E_{m+1,m-1}+E_{m+1,m})(\bf0)\\
&=(E_{m+2,m-1}+E_{m+1,m})(\mathbf{0})+
\up(E_{m+1,m-1}+E_{m+2,m})(\mathbf{0})+\\
&\quad\;
\up^2(E_{m+1,m-1}+E_{m+1,m}+E_{m+2,m+1})(\mathbf{0}),
\endaligned\end{equation*}
\begin{equation*}\label{(2)}
\aligned
(2)\;\;\;F_m F_{m+1} &F_{m-1}F_m\\
&=F_m F_{m+1}(E_{m,m-1}+E_{m+1,m})(\bf0)\\
&=F_m((E_{m,m-1}+E_{m+2,m})({\bf0})+\up((E_{m,m-1}+E_{m+1,m}+E_{m+2,m+1})(\bf0)))\\
&=(E_{m+1,m-1}+E_{m+2,m})({\bf0})+\up^{-1}(E_{m,m-1}+E_{m+1,m}+E_{m+2,m})({\bf0})\\
&\quad+\up(E_{m+1,m-1}+E_{m+1,m}+E_{m+2,m+1})(\bf0),\\
\endaligned\end{equation*}
\noindent
(3) $ F_{m-1} F_{m+1}F_mF_m=0,$ and

$$\aligned
(4)\quad  F_{m-1}F_m F_{m+1}&F_m= F_{m-1}F_m((E_{m+2,m})({\bf0})+\up(E_{m+1,m}+E_{m+2,m+1})({\bf0}))\\
&= F_{m-1}(E_{m+2,m}+E_{m+1,m})({\bf0})=(E_{m,m-1}+E_{m+1,m}+E_{m+2,m})({\bf0}).
\endaligned$$
Thus,
$$\aligned
\eta(\sfE_{m+2,m-1})F_m&=(1)-\up^{-1}(2)-\up(3)+(4)\\
&=(E_{m+2,m-1}+E_{m+1,m})({\bf0})+(\up-\up^{-1})(E_{m+1,m-1}+E_{m+2,m})({\bf0})\\
&\quad\;+(\up^2-1)(E_{m+1,m-1}+E_{m+1,m}+E_{m+2,m+1})({\bf0})\\
&\quad\;+(1-\up^{-2})(E_{m,m-1}+E_{m+1,m}+E_{m+2,m})({\bf0}).
\endaligned$$

We now compute the summands of
$F_m\eta(\sfE_{m+2,m-1})$. By applying Lemma \ref{integral generators}(2) repeatedly, we have
$$\aligned
(1)'\;\; F_m F_{m+1}&F_m F_{m-1}=F_m F_{m+1}((E_{m+1,m-1})({\bf0})+\up^{-1}(E_{m+1,m}+E_{m,m-1})({\bf0}))\\
&=F_m((E_{m+2,m-1})({\bf0})+\up(E_{m+1,m-1}+E_{m+2,m+1})({\bf0})+\\
&\quad\,\up^{-1}(E_{m+2,m}+E_{m,m-1})({\bf0})+(E_{m+1,m}+E_{m+2,m+1}+E_{m,m-1})({\bf0}))\\
&=-(E_{m+2,m-1}+E_{m+1,m})({\bf0})+\up^{-1}(E_{m+1,m-1}+E_{m+2,m})({\bf0})+\\
&\quad\,\up^{-2}(E_{m,m-1}+E_{m+1,m}+E_{m+2,m})({\bf0}),
\endaligned$$

(2)$'$ \,\,$\up^{-1}F_mF_m F_{m+1} F_{m-1}=0$,
$$\aligned
(3)'\;\,F_m F_{m-1} &F_{m+1}F_m=F_m F_{m-1}((E_{m+2,m})({\bf0})+\up(E_{m+1,m}+E_{m+2,m+1})({\bf0}))\\
&=(E_{m+1,m-1}+E_{m+2,m})({\bf0})+\up^{-1}(E_{m+2,m}+E_{m+1,m}+E_{m,m-1})({\bf0})+\\
&\quad\;\up(E_{m+1,m-1}+E_{m+1,m}+E_{m+2,m+1})({\bf0}),\\
\endaligned$$
and
$$\aligned
(4)'\quad F_m F_{m-1}F_m F_{m+1}&=F_m F_{m-1}((E_{m+2,m+1}+E_{m+1,m})({\bf0}))\\
&=F_m((E_{m+2,m+1}+E_{m+1,m}+E_{m,m-1})({\bf0}))\\
&=(E_{m+1,m-1}+E_{m+1,m}+E_{m+2,m+1})({\bf0}).
\endaligned$$
Thus,
$$\aligned F_m\eta(\sfE_{m+2,m-1})&=(1)'-\up^{-1}(2)'-\up(3)'+(4)'\\
&=-(E_{m+2,m-1}+E_{m+1,m})({\bf0})+(\up^{-1}-\up)(E_{m+1,m-1}+E_{m+2,m})({\bf0})\\
&\quad\;+(1-\up^2)(E_{m+1,m-1}+E_{m+1,m}+E_{m+2,m+1})({\bf0})\\
&\quad\;+(\up^{-2}-1)(E_{m,m-1}+E_{m+1,m}+E_{m+2,m})({\bf0}).
\endaligned$$
Hence, $[\eta(\sfE_{m+2,m-1}),F_m]=0.$
Similarly, we can prove $[ E_m,\eta(\sfE_{m-1,m+2})]=0.$ This completes the proof of
the theorem.
\end{proof}
\begin{corollary}There is a $\mathbb Q(\up)$-superalgebra homomorphism
$$\eta_r:\bfU(\mathfrak{gl}_{m|n})\longrightarrow \bsS(m|n,r)$$
sending $\sfE_{h,h+1},\sfE_{h+1,h}$ and $\sfK_a^{\pm1}$ to $E_{h,h+1}({\bf0},r),F_{h+1,h}({\bf0},r)$ and $O(\pm\bse_a,r)$, respectively.
\end{corollary}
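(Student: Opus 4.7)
The plan is to obtain $\eta_r$ as the composition $\pi_r\circ\eta$, where $\eta:\bfU(\mathfrak{gl}_{m|n})\to\bsS(m|n)$ is the superalgebra homomorphism produced in Theorem \ref{quantum serre relations} and $\pi_r:\bsS(m|n)\to\bsS(m|n,r)$ is the natural projection from the direct product $\bsS(m|n)=\prod_{r\geq 0}\bsS(m|n,r)$ onto its $r$-th factor.

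First I would note that $\pi_r$ is manifestly a homomorphism of $\mathbb Q(\up)$-superalgebras, since multiplication in a direct product is componentwise and the $\mathbb Z_2$-grading on $\bsS(m|n)$ is inherited componentwise from each $\bsS(m|n,r)$. Therefore the composition $\eta_r:=\pi_r\circ\eta$ is automatically a $\mathbb Q(\up)$-superalgebra homomorphism from $\bfU(\mathfrak{gl}_{m|n})$ to $\bsS(m|n,r)$.

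Next I would identify the images of the generators. By the definition \eqref{A(j)}, $A(\bsj)=\sum_{r\geq 0}A(\bsj,r)$, so $\pi_r(A(\bsj))=A(\bsj,r)$ for every $A\in M(m|n)^\pm$ and $\bsj\in\mathbb Z^{m+n}$. In particular, applying this to $A=E_{h,h+1}$, $A=E_{h+1,h}$ and to the zero matrix with $\bsj=\pm\bse_a$, we obtain
\[
\pi_r(E_h)=E_{h,h+1}(\mathbf{0},r),\quad \pi_r(F_h)=E_{h+1,h}(\mathbf{0},r),\quad \pi_r(\ugk_a^{\pm 1})=O(\pm\bse_a,r).
\]
Combining with the assignments of Theorem \ref{quantum serre relations}, $\eta_r$ sends $\sfE_{h,h+1}\mapsto E_{h,h+1}(\mathbf{0},r)$, $\sfE_{h+1,h}\mapsto E_{h+1,h}(\mathbf{0},r)$, and $\sfK_a^{\pm 1}\mapsto O(\pm\bse_a,r)$, as required.

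There is essentially no obstacle here: the work of verifying the defining relations (QS1)--(QS6) has already been performed in $\bsS(m|n)$ in the proof of Theorem \ref{quantum serre relations}, and these relations descend to each component $\bsS(m|n,r)$ under the projection $\pi_r$. The only minor caveat is to ensure $\pi_r$ respects the $\mathbb Z_2$-grading, which is immediate since the grading degree $\hat A$ of $A(\bsj)$ is independent of $r$ and agrees with that of $A(\bsj,r)$.
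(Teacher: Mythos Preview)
Your proof is correct and is exactly the argument the paper intends: the corollary is stated without proof because it follows immediately from Theorem \ref{quantum serre relations} by composing $\eta$ with the componentwise projection $\pi_r:\bsS(m|n)\to\bsS(m|n,r)$, precisely as you do. There is nothing to add.
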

In the last two section, we  will prove that $\eta$ induces an algebra isomorphism from $\bfU(\mathfrak{gl}_{m|n})$ to $\fA(m|n)$. In other words, we want to show that $\eta$ is injective and its image is $\fA(m|n)$. For the latter, we prove that $\fA(m|n)$ is generated by $\ugk_a^\pm$, $E_h$, and $F_h$; while, for the former, we show that $\eta$ sends a basis to a linearly independent set. We will see a monomial basis plays a key role in the proof.

\section{A super triangular relation}

We now aim to construct a monomial basis and a triangular relation from the monomial basis to the basis $\{A(\bsj)\}_{A,\bsj}$. In this section, we first make the construction at the $\up$-Schur superalgebra level.

We first recall \eqref{jmath} and the Bruhat order on $M(m|n,r)$ from \cite{DR}: if $A=\jmath(\la,y,\mu)$ and $B=\jmath(\la,w,\mu)$, then we define
$$A\leq B \iff y\leq w\;(\text{the Bruhat order on $W$})\iff W_\la yW_\mu\leq W_\la wW_\mu.$$
We first observe the following super version of \cite[Prop. 7.38]{DDPW} and \cite[Lem.~9.7]{DDPW} continue to hold. For a double coset $D=W_\la d W_\mu$ ($d\in\sD_{\la\mu}^\circ$), let 
$$T_D=\sum_{z\in\sD_{\la d\cap\mu}\cap W_\mu}(-\bsq)^{-\ell(z_1)}x_{\la^{(0)}}y_{\la^{(1)}}T_dT_z$$ be defined as in \cite[(5.3.2)]{DR}, where $x_{\la^{(0)}}y_{\la^{(1)}}=\sum_{w\in W_\la}(-\bsq)^{\ell(w_1)}T_w$.

\begin{lemma} For $d\in\sD_{\la\mu}^\circ$ and $d'\in\sD_{\mu\nu}^\circ$, let $D=W_\la d W_\mu$, $D'=W_\mu d W_\nu$, $A=\jmath(\la,d,\mu)$ and $A'=\jmath(\mu,d',\nu)$.
\begin{enumerate}
\item If $T_DT_{D'}=\sum_{D''\in W_\la \sD^\circ_{\la\nu}W_\nu}f_{D,D',D''}T_{D''}$, then there exists
a $D_0$ with $f_{D,D',D_0}\neq0$ and $D''\leq D_0$ whenever $f_{D,D',D''}\neq0$.
\item If $\xi_A\xi_{A'}=\sum_{A''\in M(m|n,r)}g_{A,A',A''}\xi_{A''}$, then there exists
a $A_0$ with $g_{A,A',A_0}\neq0$ and $A''\leq A_0$ whenever $g_{A,A',A''}\neq0$.
\end{enumerate}
\end{lemma}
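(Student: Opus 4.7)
The strategy is to establish (1) first by Hecke-algebra bookkeeping adapted to the super setting, and then to deduce (2) by transferring the resulting triangularity to the Schur superalgebra side via the relation $\xi_A=\bsq^{-d(A)}\tau(N_{A'})$ from \eqref{d(A)} together with the defining action of $\sS_\bsq(m|n,r)$ on $V(m|n)^{\otimes r}$.

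For (1), I would expand $T_D$ in the basis $\{T_w\}_{w\in W}$ by substituting $x_{\la^{(0)}}y_{\la^{(1)}}=\sum_{w\in W_\la}(-\bsq)^{\ell(w_1)}T_w$ into the formula for $T_D$ from \cite[(5.3.2)]{DR}. This exhibits $T_D$ as a linear combination of $T_u$ for $u$ ranging over the double coset $W_\la dW_\mu$, and the longest element $u_D$ of this double coset appears with a single invertible scalar coefficient (a power of $-\bsq$). Similarly $T_{D'}$ has leading term $T_{u_{D'}}$ on the longest element $u_{D'}\in W_\mu d' W_\nu$. Using the standard factorisation of the longest element of a double coset, one may choose representatives so that $u_D\in\sD_\mu^{-1}$ and $u_{D'}\in\sD_\mu$, forcing $\ell(u_Du_{D'})=\ell(u_D)+\ell(u_{D'})$ and hence $T_{u_D}T_{u_{D'}}=T_{u_Du_{D'}}$. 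By the Hecke multiplication rules, every other product $T_uT_{u'}$ appearing in the expansion of $T_DT_{D'}$ yields summands $T_w$ with $w\le u_Du_{D'}$ in Bruhat order. Letting $D_0$ be the double coset of $u_Du_{D'}$, uniqueness of the longest $T_w$ forbids cancellation at the top, so $f_{D,D',D_0}$ is an invertible scalar and all other $D''$ with $f_{D,D',D''}\neq 0$ satisfy $D''\le D_0$ under the double-coset Bruhat order.

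For (2), I would exploit that, by Lemma~\ref{norm basis}(2) and \eqref{d(A)}, the assignments $A\mapsto\xi_A$ and $A\mapsto N_{A'}$ differ only by $\tau$ and a power of $\bsq$, and that $\tau$ is an anti-automorphism permuting the homogeneous $N$-basis via transposition. Hence the expansion $\xi_A\xi_{A'}=\sum g_{A,A',A''}\xi_{A''}$ can be read off the expansion of $N_{(A')'}N_{A'}$, applied to $v_\mu$ with $\mu=\co(A')$; the resulting computation is governed by Hecke-algebra products of the $T_D$-type combinations handled in~(1). Since the bijection $\jmath$ between matrices in $M(m|n,r)$ and double cosets preserves the Bruhat order by definition, the maximal double coset $D_0$ produced in (1) corresponds to a maximal matrix $A_0\in M(m|n,r)$, and the coefficient $g_{A,A',A_0}$ inherits the nonvanishing of $f_{D,D',D_0}$ up to an invertible prefactor built out of $\bsq$-powers.

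The main obstacle will be the sign bookkeeping peculiar to the super setting: unlike in the classical case, where $\bsq$-coefficients are positive and non-cancelling, here each summand carries parity signs controlled by \eqref{hatA}, Lemma~\ref{sign}, and Lemma~\ref{sign2}. Because both the leading double coset $D_0$ in (1) and the leading matrix $A_0$ in (2) are singled out by strict maximality in Bruhat order, the leading coefficient receives a single contribution, ruling out cancellation; it then suffices to verify that this single contribution is a product of invertible $\bsq$-powers and $\pm 1$'s, which follows from the sign formulas already derived in Sections~2 and~3.
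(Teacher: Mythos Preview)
Your outline for (1) contains a real gap. The assertion that the longest element $u_D$ of $W_\la d W_\mu$ can be taken in $\sD_\mu^{-1}$ (and $u_{D'}\in\sD_\mu$) is false in general: take $W_\la=\{1\}$, $W_\mu\neq\{1\}$, $d=1$; then $u_D=w_{0,\mu}$, which is not a shortest left $W_\mu$-coset representative, and $T_{u_D}T_{u_{D'}}$ is not simply $T_{u_Du_{D'}}$. Consequently the length-additivity step collapses, and moreover the top term of $T_DT_{D'}$ typically receives contributions from \emph{several} cross-products $T_wT_{w'}$, contrary to the claim in your final paragraph that ``the leading coefficient receives a single contribution''. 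The paper's fix (inherited from \cite[Prop.~7.38]{DDPW}) is to use the Hecke $*$-product: for any $x,y\in W$ the unique Bruhat-maximal $T_z$ occurring in $T_xT_y$ is $z=x*y$, and $*$ is monotone in each variable for the Bruhat order. Hence the maximal basis element in $T_DT_{D'}$ is $T_{w_D*w_{D'}}$, and $D_0$ is the $(W_\la,W_\nu)$-double coset containing $w_D*w_{D'}$.

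For (2), your plan to pass through $\tau$ and the $N$-basis is viable in principle, but you have not supplied the bridge back to (1): the $N_A$ live in $\End(V(m|n)^{\otimes r})$, not in $\sH_\bsq(r)$, so saying the computation ``is governed by Hecke-algebra products of the $T_D$-type'' still leaves the identification of structure constants unproved. The paper closes this gap in one line by invoking \cite[8.3,8.4]{DGW}, which gives $\xi_A=\pm\up^{*}\phi_A$ for the basis $\phi_A$ of \cite[(5.7.1)]{DR} built directly from $T_D$; one then has $f_{D,D',D''}=d_\mu\,g_{A,A',A''}$ for some $d_\mu\in\sZ$ independent of $D''$, so (2) follows immediately from (1).
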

\begin{proof} For (1), the proof of \cite[Prop.~7.38]{DDPW} carries over to the super case.  A key observation is that the leading term in $T_DT_{D'}$ is $T_{w_D*w_{D'}}$ which must occurs in some $T_{D''}$ of the RHS.
By \cite[8.3,8.4]{DGW}, $\xi_A=\pm\up^*\phi_A$ where $\phi_A$ is defined in \cite[(5.7.1)]{DR}.
Thus, $f_{D,D',D''}=d_\mu g_{A,A',A''}$ for some $d_\mu\in\sZ$, (2) follows from (1).
\end{proof}

The following lemma is the super version of \cite[Lem.~3.8]{BLM} (see also \cite[Lem.~13.22]{DDPW}). The ``lower terms" in an expression of the form ``$\xib_A+\mbox{
lower
 terms}$" means a $\mathbb{Q}(\up)$-linear combination of elements
 $\xib_B$ with $B\in M(m|n,r)$ and $B<A$ (the Bruhat ordering on $M(m+n,r)$ defined above.

\begin{lemma}\label{lemma of BLM triangular relations}
Assume that $1\leq k\leq m+n$, $1\leq h<m+n$, and $p\in \mathbb{N}.$ Let
$A\in M(m|n,r)$, $U_p=U_{p}(h,\ro(A))$, and $L_p=L_{p}(h,\ro(A))$ as in (\ref{$U_p,L_p$}).
\begin{itemize}
\item[(1)] If $A\in M(m|n,r)$ has the form
\begin{equation*}
A=\left(
\begin{array}{cccccc}
\cdots&\cdots&\cdots&&\cdots&\\
\cdots&a_{h,k-1}&0&0&\cdots&0\\
\cdots&a_{h+1,k-1}&a_{h+1,k}&0&\cdots&0\\
\cdots&\cdots&\cdots&&\cdots&
\end{array}
\right).
\end{equation*}
with $a_{h,j}=0$, for $k\leq j\leq m+n$,
$a_{h+1,j}=0$, for $k+1\leq j\leq m+n$, and $a_{h+1,k}\geq p$, then
$$\aligned
\text{\rm(a)}\quad\xib_{U_p}\xib_A&=\xib_{A_p}+\mbox{lower terms,}\text{ for $h\neq m$, and}\\
\text{\rm(b)}\quad\xib_{U_1}\xib_A&=(-1)^{\sum_{i>m,j<k}a_{i,j}}\xib_{A_1}+\mbox{lower
terms, for $h=m$.}\\
\endaligned$$
 where $A_p=A+pE_{h,k}-pE_{h+1,k}$.
\item[(2)] If $A\in M(m|n,r)$ has the form
\begin{equation*}
A=\left(
\begin{array}{cccccc}
&\cdots&&\cdots&\cdots&\cdots\\
0&\cdots&0&a_{h,k}&a_{h,k+1}&\cdots\\
0&\cdots&0&0&a_{h+1,k+1}&\cdots\\
&\cdots&&\cdots&\cdots&\cdots
\end{array}
\right).
\end{equation*}
with $a_{h,j}=0$, for $1\leq j\leq k-1$,
$a_{h+1,j}=0$, for $1\leq j\leq k$, and $a_{h,k}\geq p$, then
$$\aligned
\text{\rm(a)}\quad\xib_{L_p}\xib_A&=\xib_{A_p}+\mbox{lower terms, for $h\neq m$, and}\\
\text{\rm(b)}\quad\xib_{L_1}\xib_A&=(-1)^{\sum_{i>m,j<k}a_{i,j}}\xib_{A_1}+\mbox{lower
terms, for $h=m$},\endaligned$$ where $A_p=A-pE_{h,k}+pE_{h+1,k}$.
\end{itemize}
\end{lemma}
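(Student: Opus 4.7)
The plan is to apply the explicit multiplication formulas of Propositions~\ref{integral basis multiplication} and~\ref{integral basis multiplication $h=m$} to expand $\xib_{U_p}\xib_A$ (respectively $\xib_{L_p}\xib_A$) and to single out one distinguished summand as the candidate leading term $\xib_{A_p}$, then to show that every other summand corresponds to a matrix strictly below $A_p$ in the Bruhat order on $M(m|n,r)$.

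For (1)(a), I would expand $\xib_{U_p}\xib_A$ via Proposition~\ref{integral basis multiplication}(1). The hypothesis $a_{h+1,l}=0$ for $l>k$ forces the constraint $\nu\leq\row_{h+1}(A)$ to confine $\nu$ to columns $l\leq k$, and the distinguished summand is $\nu=p\bse_k$, admissible thanks to $a_{h+1,k}\geq p$. Its index matrix is exactly $A_p$, and its coefficient collapses to~$1$: the exponent $f_h(p\bse_k,A)$ vanishes term by term because $\sum_{j\geq k}a_{h,j}=0$, $\sum_{j>k}a_{h+1,j}=0$, and the self-product $\sum_{t<t'}\nu_t\nu_{t'}$ is empty, while the product of Gaussian binomials reduces to $\overline{\left[\!\!\left[p\atop p\right]\!\!\right]}_{\up_h^2}=1$ since $a_{h,k}=0$.

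For every other admissible $\nu$, set $B_\nu:=A+\sum_l\nu_l(E_{h,l}-E_{h+1,l})$. Using $p-\nu_k=\sum_{l<k}\nu_l$ one obtains the identity
\begin{equation*}
A_p-B_\nu=\sum_{l<k}\nu_l\bigl((E_{h,k}+E_{h+1,l})-(E_{h,l}+E_{h+1,k})\bigr),
\end{equation*}
so passing from $B_\nu$ to $A_p$ is a composition of $\sum_{l<k}\nu_l$ elementary moves, each transferring one unit of mass from an anti-diagonal pair $\{(h,l),(h+1,k)\}$ to the diagonal pair $\{(h,k),(h+1,l)\}$ with $l<k$. Each such move preserves row and column sums while strictly increasing the partial sums $\sigma_{r,s}(X):=\sum_{i\leq r,\,j\geq s}x_{i,j}$ on a nonempty subset of $(r,s)$ and leaving the others fixed, whence $B_\nu<A_p$ in the Bruhat order (cf.~\cite[\S9.3]{DDPW}). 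Part (1)(b) is the specialisation of the same reasoning to Proposition~\ref{integral basis multiplication $h=m$}(1): the $k$-th summand yields $(-1)^{\sum_{i>m,j<k}a_{i,j}}\xib_{A_1}$ because $f_m(\bse_k,A)=0$ and $\overline{[\![a_{m,k}+1]\!]}_{\up^2}=1$, while every $k'<k$ term produces a matrix strictly below $A_1$ under the same move.

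Part (2) I would handle by the mirror-image argument, invoking Propositions~\ref{integral basis multiplication}(2) and~\ref{integral basis multiplication $h=m$}(2) with the roles of rows $h$ and $h+1$ swapped: now $a_{h,j}=0$ for $j<k$ forces $\nu\leq\row_h(A)$ to be supported on $l\geq k$, and $\nu=p\bse_k$ gives $A_p=A-pE_{h,k}+pE_{h+1,k}$ with unit coefficient (with the expected sign in the $h=m$ case). The main technical obstacle I anticipate is the clean tracking of the Bruhat comparison together with the parity sign $(-1)^{\sigma(\cdot)}$ appearing in the $h=m$ case; however, since this sign depends only on entries strictly left of the column being modified, it is invariant under the elementary moves above and therefore cannot interfere with the distinguished leading coefficient.
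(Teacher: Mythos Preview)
Your argument is correct and is essentially the approach the paper intends: the paper's own proof for $h\neq m$ simply invokes \cite[13.22]{DDPW} (after noting that Proposition~\ref{integral basis multiplication} is the $\up\mapsto\up_h$ variant of \cite[13.18]{DDPW}), and for $h=m$ applies Proposition~\ref{integral basis multiplication $h=m$} directly; you have spelled out exactly that computation, including the explicit identification of the leading summand $\nu=p\bse_k$, the verification that its coefficient is $1$ (resp.\ $(-1)^{\sum_{i>m,j<k}a_{i,j}}$), and the Bruhat comparison via the elementary moves $(E_{h,k}+E_{h+1,l})-(E_{h,l}+E_{h+1,k})$.

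One small remark: your final comment about the sign $(-1)^{\sigma(\cdot)}$ being ``invariant under the moves'' is unnecessary---the signs attached to the $k'<k$ summands are simply absorbed into the lower terms and play no role in isolating the leading coefficient.
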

\begin{proof}
When $h\neq m$, comparing Proposition \ref{integral basis
multiplication} with \cite[theorem 13.18]{DDPW}($1'$) and ($2'$), we
find that if we replace $\up$ in \cite[theorem 13.18]{DDPW}($1'$) and
($2'$) with $\up_h$, we get Proposition \ref{integral basis
multiplication}. Similar to the proof of \cite[13.22]{DDPW}, we have
$\xib_{U_p}\xib_A=\xib_{A_p}+\mbox{lower terms}$ and
$\xib_{L_p}\xib_A=\xib_{A_p}+\mbox{lower terms}$ respectively.

When $h=m$, by Lemma \ref{main lemma},
if $\xib_{U_p}\xib_A\neq 0$ or $\xib_{L_p}\xib_A\neq 0$, then
$p=1$. Applying Proposition \ref{integral basis multiplication $h=m$} to the matrix  $A$ in (1) or (2)
gives the required formulas.
\end{proof}

Let
$$\mathscr{T}=\mathscr T(m+n)=\{(i,h,j)\mid 1\leq i\leq h<j\leq m+n\}.$$
There are two total ordering $\leq_i(i=1,2)$ on $\mathscr{T}$ which are
defined as follows (see \cite[p.562]{DDPW}):
\begin{equation}\label{triangular relations total ordering}
\left\{
\begin{aligned}
(i,h,j)&\leq_1(i',h',j')\\
\iff &\mbox{ one of the following three conditions is
satisfied:}\\
&(\verb"i")j>j',(\verb"ii")j=j',i>i', \mbox{ and }(\verb"iii")j=j',i=i',h\leq h';\\
(i,h,j)&\leq_2(i',h',j')\\
\iff &\mbox{ one of the following three conditions is
satisfied:}\\
&(\verb"i")i<i',(\verb"ii")i=i',j<j', \mbox{ and
}(\verb"iii")j=j',i=i',h\geq h'.
\end{aligned}
\right.
\end{equation}
For example, $\mathscr{T}(3)=\{(2,2,3),(1,1,3),(1,2,3),(1,1,2)\}$ under $\leq_1$. In general,
\begin{equation}\label{two orders}
(\mathscr T,\leq_1)=(\scrT_{m+n},\ldots,\scrT_3,\scrT_2)\quad\text{and}\quad
(\mathscr T,\leq_2)=(\scrT_1',\scrT_2',\ldots,\scrT_{m+n-1}')
\end{equation}
 where, for every $m+n\geq j\geq2$ and $1\leq l< m+n$, $\scrT_j$ is the sequence (formed by inserting top row entries into the $*$ entry below):
\begin{equation}\label{T_j}
\underbrace{j-1}_{(j-1,*,j)\;\,\;}\underbrace{j-2,j-1}_{(j-2,*,j)\;\;\;}\;\ldots\;\underbrace{i,i+1,\ldots,j-1}_{(i,*,j)}\;\ldots\;\underbrace{1,2,\ldots,j-1}_{(1,*,j)}
\end{equation}
while $\scrT'_l$ is the sequence:
\begin{equation}\label{T'_k}
\underbrace{l}_{(l,*,l+1)}\;\ldots\;\underbrace{l+k-1,\dots,l+1,l}_{(l,*,l+k)}\;\ldots\;\underbrace{m+n-1,\ldots,l+1,l}_{(l,*,m+n)}.
\end{equation}

 The following definition is taken from
 \cite[Def.~13.23]{DDPW} which modifies the definition given in \cite{BLM}.

\begin{definition}\label{definition $E^A,F^A$}
Let $A=(a_{i,j})\in {M}(m|n)$ and define recursively almost
diagonal matrices $E^{(A)}_{i,h,j}$, for $(i,h,j)\in(\mathscr{T},\leq_1)$ as follows:
\begin{enumerate}
\item $E_{1,1,2}^{(A)}$ is the matrix defined by the conditions that
$\co(E_{1,1,2}^{(A)})=\co(A)$ and $E^{(A)}_{1,1,2}-a_{1,2}E_{1,2}$
is diagonal;
\item If $(i,h,j)$ is the immediate predecessor of $(i',h',j')$, then
 $E_{i,h,j}^{(A)}$ is defined by the conditions that
 $\co(E_{i,h,j}^{(A)})=\ro(E_{i',h',j'}^{(A)})$ and
 $E_{i,h,j}^{(A)}-a_{i,j}E_{h,h+1}$ is diagonal.
\end{enumerate}
 Similarly, define recursively almost
diagonal matrices $F^{(A)}_{i,h,j}$, for $(i,h,j)\in(\mathscr{T},\leq_2)$,
with respect to $\leq_2$ as follows:
\begin{enumerate}
\item $F_{N-1,N-1,N}^{(A)}$ ($N=m+n$) is the matrix defined by the
conditions that
$F_{N-1,N-1,N}^{(A)}-a_{N,N-1}E_{N,N-1}$ is diagonal
and
$\co(F_{N-1,N-1,N}^{(A)})=\ro(E_{N-1,N-1,N}^{(A)})$;
\item If $(i,h,j)$ is the immediate predecessor of $(i',h',j')$, then
 $F_{i,h,j}^{(A)}$ is defined by the conditions that
 $\co(F_{i,h,j}^{(A)})=\ro(F_{i',h',j'}^{(A)})$ and
 $F_{i,h,j}^{(A)}-a_{j,i}E_{h+1,h}$ is diagonal.
 \end{enumerate}
\end{definition}

It is easy to see from the definition that, if  $A\in M(m|n,r)$,
then all $E_{i,h,j}^{(A)}$ and $F_{i,h,j}^{(A)}$ are in $M(m|n,r)$.

For $A\in M(m|n)$, define $\bar A$ as in \eqref{signAbar}.

\begin{theorem}\label{BLM triangular}
Maintain the notation introduced above and let $A=(a_{i,j})\in
M(m|n,r)$. The following triangular relation holds in
$\sS_\up(m|n,r)$:
\begin{equation}\label{BLM triangular relations formula}
\Psi_A:=\prod_{(i,h,j)\in(\mathscr T,\leq_2)}\xib_{F_{i,h,j}^{(A)}}\cdot\prod_{(i,h,j)\in(\mathscr T,\leq_1)}\xib_{E_{i,h,j}^{(A)}}=(-1)^{\overline{A}}\xib_A+\mbox{lower
terms},
\end{equation}
where the products are taken over the total orderings listed in \eqref{two orders}. In particular, the set $\{\Psi_A\}_{A\in M(m|n,r)}$ forms a basis for $\sS_\up(m|n,r)$.

\end{theorem}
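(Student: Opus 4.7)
The plan is to unfold both products in the stated orders using Lemma~\ref{lemma of BLM triangular relations} (the BLM-type triangular identities for $\xib_{U_p}$ and $\xib_{L_p}$). Because in each product the rightmost factor matches $\co(A)$ by the recursive construction of $E^{(A)}_{i,h,j}$ and $F^{(A)}_{i,h,j}$, the composition chains correctly via Lemma~\ref{norm basis}. First I would show that the $E$-product yields $(-1)^{\overline{A}}\xib_{A^+}+(\text{lower terms})$, where $A^+$ agrees with $A$ in the strictly upper-triangular part and has diagonal uniquely determined by the column sums $\co(A)$, with ``lower terms'' meaning Bruhat-lower in $M(m|n,r)$. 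Then the $F$-product, applied on the left, inserts the strictly lower-triangular entries of $A$ without producing any further sign. The basis assertion then follows from the resulting Bruhat-triangular expansion of $\Psi_A$ in the known basis $\{\xib_A\}$.

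For the $E$-product (read left-to-right in $\leq_1$-increasing order, so that the rightmost factor $\xib_{E^{(A)}_{(1,1,2)}}$ is applied first to $\xib_{\co(A)}$), the product then traverses the blocks $\scrT_2,\scrT_3,\ldots,\scrT_{m+n}$ and fills in the upper-triangular entries column by column from $j=2$ up to $j=m+n$. A routine check shows that at every step the running leading matrix $B$ has exactly the staircase shape required by Lemma~\ref{lemma of BLM triangular relations}(1), so each step produces a single $\xib$-term plus Bruhat-lower terms. Signs appear only at the $h=m$ steps, which occur precisely at triples $(i,m,j)$ with $i\leq m<j$ (corresponding to the 12-block entries of $A$, where $a_{i,j}\in\{0,1\}$ by the even-odd trivial intersection property). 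At such a step with $k=j$, part (1)(b) of the lemma contributes $(-1)^{\Sigma(i,j)}$ with $\Sigma(i,j)=\sum_{i'>m,\,j'<j}B_{i',j'}$. Because columns $j'<j$ are by then fully processed, one has $B_{i',j'}=a_{i',j'}$ for $i'<j'$, $B_{i',j'}=0$ for $i'>j'$, and the column-sum identity $\co(A)_{j'}=\sum_{i''}a_{i'',j'}$ yields $B_{j',j'}=a_{j',j'}+\sum_{i>j'}a_{i,j'}$. Adding these three contributions collapses $\Sigma(i,j)$ to $\sum_{m<j'<j}\sum_{i'>m}a_{i',j'}\pmod 2$, independent of $i$. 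Summing over all $(i,j)$ with $i\leq m<j$ and $a_{i,j}=1$, the total exponent becomes $\sum_{j>m}\bigl(\sum_{i\leq m}a_{i,j}\bigr)\bigl(\sum_{m<j'<j,\,i'>m}a_{i',j'}\bigr)\equiv\overline{A}\pmod 2$.

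For the $F$-product the analysis is symmetric via Lemma~\ref{lemma of BLM triangular relations}(2) in $\leq_2$-increasing order, with the rightmost factor applied first. The $h=m$ steps occur precisely at triples $(i,m,j)$ with $i\leq m<j$ (the 21-block entries), now with $k=i$. At such a step the sign exponent $\sum_{i'>m,\,j'<i}B_{i',j'}$ vanishes modulo~$2$: the relevant positions $(i',j')$ satisfy $i'>m$ and $j'<i\leq m$, so $j'<m$ and $i'\neq j'$ (ruling out any diagonal contribution); no $E$-step ever places anything strictly below the diagonal; and the $F$-triple $(j',h,i')$ that would place $a_{i',j'}$ at $(i',j')$ satisfies $(j',h,i')<_2(i,m,j)$, so it has not yet been applied. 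Hence the $F$-product contributes no further sign, and one obtains $\Psi_A=(-1)^{\overline{A}}\xib_A+(\text{lower terms})$. The main obstacle is this sign bookkeeping in the $E$-product --- in particular the column-sum identity that collapses the running diagonals $B_{j',j'}$ into the 22-block column sums. For the basis statement: since $\{\xib_A\mid A\in M(m|n,r)\}$ is a $\mathbb Q(\up)$-basis of $\sS_\up(m|n,r)$ of cardinality $|M(m|n,r)|=\dim\sS_\up(m|n,r)$, the Bruhat-triangular expansion $\Psi_A=(-1)^{\overline{A}}\xib_A+\sum_{B<A}c_{A,B}\xib_B$ is an invertible change of basis, so $\{\Psi_A\mid A\in M(m|n,r)\}$ is also a basis.
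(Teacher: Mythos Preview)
Your proof is correct and follows essentially the same strategy as the paper's: apply Lemma~\ref{lemma of BLM triangular relations} step by step through the $E$-product (along $\leq_1$) and then the $F$-product (along $\leq_2$), keeping track of the sign produced at the $h=m$ steps, and conclude the basis statement from the resulting Bruhat-triangular expansion. The only visible difference is in the sign bookkeeping for the $E$-product: the paper packages the running sign as $N_{i,h,j}$, derives the recursion $N_{m,m,j}=N_{m,m,j-1}+\sum_{i'>m\geq k,\,m<j'<j}a_{i',j'}a_{k,j}$, and then sums to obtain $N_{m,m,m+n}=\overline{A}$; you instead compute each contribution $\Sigma(i,j)$ directly via the column-sum identity $B_{j',j'}=a_{j',j'}+\sum_{i>j'}a_{i,j'}$, observe it depends only on $j$, and sum over the $(i,j)$ with $a_{i,j}=1$ to get $\overline{A}$. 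These are the same calculation organized differently. Your argument that the $F$-product contributes no sign (because no $F$-step with first index $\geq i$ touches columns $<i$, and the $E$-product leaves rows $>m$ of columns $\leq m$ empty) is exactly the content of the paper's observation that $(A'_{i,h,j})_{j',i'}=0$ for $j'>m$, $i'<i$.
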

\begin{proof} We follow the proof of \cite[13.24]{DDPW} by repeatedly applying Lemma \ref{lemma of BLM triangular relations} and carefully manipulating the sign occurred in Lemma \ref{lemma of BLM triangular relations}(1b)\&(2b).

We start with the largest element $(1,1,2)$ in $(\scrT,\leq_1)$ and let $A_{1,1,2}=E_{1,1,2}^{(A)}$.
Repeatedly applying Lemma \ref{lemma of BLM triangular relations}(1) (and noting Lemma \ref{Bruhat}) yields, for $(i,h,j)\in(\scrT,\leq_1)$
$$\prod_{\substack{(i',h',j')\in(\mathscr T,\leq_1)\\(i,h,j)\leq_1(i',h',j')}}\xib_{E_{i',h',j'}^{(A)}}=(-1)^{N_{i,h,j}}\xi_{A_{i,h,j}}+\text{lower terms},$$
where $A_{i,h,j}$ is the upper triangular matrix
\begin{equation}\label{$A,i<j$}
\left(\begin{array}{cccccccc}
\lambda_1 &a_{1,2}&\cdots&a_{1,j-1}&a_{1,j}&0&\cdots\\
0&\lambda_2-a_{1,2}&\cdots&a_{2,j-1}&a_{2,j}&0&\cdots\\
\vdots&\vdots&&\vdots&\vdots&\vdots&\\
0&0&\cdots&a_{i-1,j-1}&a_{i-1,j}&0&\cdots\\
0&0&\cdots&a_{i,j-1}&0&0&\cdots\\
\vdots&\vdots&&\vdots&\vdots&\vdots&\\
0&0&\cdots&a_{h,j-1}&a_{i,j}&0&\cdots\\
\vdots&\vdots&&\vdots&\vdots&\vdots&\\
0&0&\cdots&\lambda_{j-1}-\sum_{l=1}^{j-2}a_{l,j-1}&0&0&\cdots\\
0&0&\cdots&0&\lambda_{j}-\sum_{l=1}^{i}a_{l,j}&0&\cdots\\
0&0&\cdots&0&0&\lambda_{j+1}&\cdots\\
\vdots&\vdots&&\vdots&\vdots&\vdots&
\end{array}
\right)
\end{equation}
Note that, if $j\leq m$ and $(i',h',j')\geq_1(i,h,j)$, then $h'<m$. So only Lemma \ref{lemma of BLM triangular relations}(1a) applies. Hence, all $N_{i,h,j}=0$ unless $j\geq m+1$.
Similarly, if $j\geq m+1$ and $(i,h,j)$ is an immediate predecessor or an immediate successor of $(i',h',j')$ with $h\neq m$, then  $N_{i,h,j}=N_{i',h',j'}$. Thus, if $j>m+1$, then (see \eqref{T_j})
\begin{equation}\label{N_{1,m+1,j}}
\begin{aligned}
(1)&\;N_{1,m+1,j}=N_{1,m+2,j}=\cdots=N_{1,j-1,j}=N_{j-2,j-2,j-1}=\ldots=N_{m,m,j-1},&\\
(2)&\;N_{i+1,m+1,j}=\cdots=N_{i+1,j-1,j}=N_{i,i,j}=\cdots=N_{i,m-1,j}= N_{i,m,j}, \qquad\quad\;\,\mbox{ for } i\leq m,\\
(3)&\;N_{j-1,j-1,j}=\cdots=N_{i+1,i+1,j}=\cdots=N_{i+1,j-1,j}=N_{i,i,j}=\cdots=N_{i,j-1,j},\mbox{ for } i>m.
\end{aligned}
\end{equation}
Hence, all $N$-values are determined by $N_{i,m,j}$ with $1\leq i\leq m$ and $m+1\leq j\leq m+n$.

We first claim the following recursive formula:
$$N_{m,m,j}=\begin{cases} 0,&\text{ if }j=m+1;\\
N_{m,m,j-1}+\displaystyle\sum_{\substack{m+n\geq
i'>m\geq i\geq1,\\m<j'<j\leq m+n}}a_{i',j'}a_{i,j},&\text{ if }j\geq m+2.\\
\end{cases}
$$
Indeed, $N_{m,m,m+1}=0$ is clear since the $(k,l)$ entry $(A_{i,h,m+1})_{k,l}=0$ whenever $k>m$ and $l<m+1$. Assume now $j\geq m+2$. For $i=1$, by \eqref{N_{1,m+1,j}}(1), $N_{1,m+1,j}=N_{m,m,j-1}$.  Then, by \eqref{N_{1,m+1,j}}(2) and noting $a_{i,j}=0$ or $1$ for all $i\leq m$,
$$\aligned
N_{1,m,j}&=N_{1,m+1,j}+\sum_{i'>m,m<j'<j}a_{i',j'}a_{1,j}=N_{m,m,j-1}+\sum_{i'>m,m<j'<j}a_{i',j'}a_{1,j},\\
N_{2,m,j}&=N_{1,m,j}+\sum_{i'>m,m<j'<j}a_{i',j'}a_{2,j}=N_{m,m,j-1}+\sum_{i'>m,m<j'<j}a_{i',j'}(a_{1,j}+a_{2,j}),\\
\cdots&\quad\,\cdots,\\
N_{m,m,j}&=N_{m-1,m,j}+\sum_{i'>m,m<j'<j}a_{i',j'}a_{m,j}=N_{m,m,j-1}+\sum_{\substack{i'>m\geq i\geq1\\m<j'<j}}a_{i',j'}a_{i,j},\endaligned$$
proving the claim.

By the claim, we obtain a close formula:
$$N_{m,m,m+n}=\sum_{\substack{m+n\geq i>m\geq k\geq
1,\\m<j<l\leq m+n}}a_{i,j}a_{k,l}=\bar{A}.$$
Thus, by \eqref{N_{1,m+1,j}}(3), $N_{m,m,m+1}=N_{m+n-1,m+n-1,m+n}$ and so
$$\xi(E):=\prod_{(i,h,j)\in(\scrT,\leq_1)}\xib_{E^{(A)}_{i,h,j}}=(-1)^{\bar{A}}\xib_{A_{m+n-1,m+n,m+n}}+\mbox{ lower
terms}.$$

Now, for $(i,h,j)\in(\scrT,\leq_2)$,
$$\prod_{\substack{(i',h',j')\in(\mathscr T,\leq_2)\\(i,h,j)\leq_1(i',h',j')}}\xib_{F_{i',h',j'}^{(A)}}\cdot \xi(E)=(-1)^{N_{i,h,j}'+\bar A}\xi_{A'_{i,h,j}}+\text{lower terms},$$
where $A'_{i,h,j}$ is a matrix of the form
\begin{equation*}\label{$A,i>j$}
\left(\begin{array}{cccccccc}
\lambda_1 &a_{1,2}&\cdots&a_{1,i-1}&a_{1,i}&a_{1,i+1}&\cdots\\
0&\lambda_2-a_{1,2}&\cdots&a_{2,i-1}&a_{2,i}&a_{2,i+1}&\cdots\\
\vdots&\vdots&&\vdots&\vdots&\vdots&\\
0&0&\cdots&\lambda_{i-1}-\sum_{l=1}^{i-2}a_{l,i-1}&a_{i-1,i}&a_{i-1,i+1}&\cdots\\
0&0&\cdots&0&\lambda_{i}-\sum_{l<i}a_{l,i}-\sum_{l\geq j}a_{l,i}&a_{i,i+1}&\cdots\\
\vdots&\vdots&&\vdots&\vdots&\vdots&\\
0&0&\cdots&0&0&a_{h,i+1}&\cdots\\
0&0&\cdots&0&a_{j,i}&a_{h+1,i+1}&\cdots\\
0&0&\cdots&0&0&a_{h+2,i+1}&\cdots\\
\vdots&\vdots&&\vdots&\vdots&\vdots&\\
0&0&\cdots&0&0&a_{j,i+1}&\cdots\\
0&0&\cdots&0&a_{j+1,i}&a_{j+1,i+1}&\cdots\\
\vdots&\vdots&&\vdots&\vdots&\vdots&\\
0&0&\cdots&0&a_{m+n,i}&a_{m+n,i+1}&\cdots
\end{array}
\right)
\end{equation*}
(In particular, $A'_{1,1,2}=A$.) From the matrix above, we see that
$(A'_{i,m+1,j})_{j',i'}=0$ for $j'>m,i'<i$.
Hence, by Lemma \ref{lemma of BLM triangular relations}, all $N'_{i,h,j}=0$ and
 $$\prod_{(i,h,j)\in(\scrT,\leq_2)}\xib_{F_{i,h,j}^{(A)}}\cdot\xi(E)=(-1)^{\overline{A}}\xib_A+\mbox{lower
terms},$$
as required.
\end{proof}



\section{Realisation of the quantum general linear supergroups}

We are now in position to solve the realisation problem for $\bfU(\mathfrak{gl}_{m|n})$ by first determining the image of the homomorphism $\eta$ and then proving that $\eta$ is injective. This requires another triangular relation with respect the preorder $\preceq$ on $M(m|n)$ (or more precisely on $M(m|n)^\pm$) which we define now. This order has already been implicitly used in the proof of Lemma \ref{lemma of BLM triangular relations}.

For
$A=(a_{i,j}),A'=(a'_{i,j})\in M(m|n)$, define
 \begin{equation}\label{prec}
 A'\preceq A\iff
\begin{cases}
(1)\quad \sum_{i\leq s,j\geq t}a'_{i,j}\leq \sum_{i\leq
s,j\geq t}a_{i,j},&\text{for all $s<t$};\\
(2)\quad \sum_{i\geq s,j\leq t}a'_{i,j}\leq \sum_{i\geq
s,j\leq t}a_{i,j},&\text{for all $s>t$}\end{cases}
\end{equation}
and
$$\|A\|=\sum_{i<j}{j-i+1\choose2}(a_{i,j}+a_{j,i}).$$
These definitions are independent of the diagonal entries of a matrix.
Moreover, the following is taken from
\cite[Lem.~3.6(1)]{BLM} (see also \cite[Lem.~13.20,13.21]{DDPW}).
\begin{equation}\label{prec order}
A<B\,(\text{the Bruhat order})\implies A^\pm \prec B^\pm\implies \|A^\pm\|<\|B^\pm\|,
\end{equation}
where $A,B\in M(m|n,r)$ and $X^\pm$ is the matrix obtained by replacing the diagonal entires by 0.

\begin{theorem}\label{BLM triangular relations in V} The (super) subspace
$\mathfrak A(m|n)$ of the algebra $\bsS(m|n)$ defined as in \eqref{A(j)} is the (super) subalgebra generated by
$$E_h=E_{h,h+1}(\mathbf{0}),
F_h=E_{h+1,h}(\mathbf{0}),\ugk_i^{\pm1}=O(\pm\bse_i),$$
for
all $1\leq h<m+n$ and $1\leq i\leq m+n$.
\end{theorem}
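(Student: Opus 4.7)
The plan is to establish two inclusions. Let $\mathfrak{A}'(m|n)$ denote the subalgebra of $\bsS(m|n)$ generated by $E_h,F_h,\ugk_i^{\pm1}$. For $\mathfrak{A}'(m|n)\subseteq\mathfrak{A}(m|n)$: each generator already lies in $\mathfrak{A}(m|n)$ (with $\ugk_i^{\pm1}=O(\pm\bse_i)=0(\pm\bse_i)$ and $E_h=E_{h,h+1}(\mathbf{0})$, $F_h=E_{h+1,h}(\mathbf{0})$ of the form $A(\mathbf{0})$), while Propositions \ref{multiply O} and \ref{h neq m} show that $\mathfrak{A}(m|n)$ is closed under multiplication by these generators, so the subalgebra they generate sits inside $\mathfrak{A}(m|n)$.

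For the reverse inclusion, since $O(\bsj)=\prod_i\ugk_i^{j_i}\in\mathfrak{A}'(m|n)$ and Proposition \ref{multiply O}(1) gives $A(\bsj)=\up^{-\ro(A)\centerdot\bsj}O(\bsj)A(\mathbf{0})$, it suffices to show that $A(\mathbf{0})\in\mathfrak{A}'(m|n)$ for every $A\in M(m|n)^{\pm}$. I would argue this by induction on $\|A\|=\sum_{i<j}\binom{j-i+1}{2}(a_{i,j}+a_{j,i})$; the base case $A=0$ gives $0(\mathbf{0})=O(\mathbf{0})=1\in\mathfrak{A}'(m|n)$. For the inductive step, I introduce the monomial
\begin{equation*}
\mathfrak{m}(A):=\prod_{(i,h,j)\in(\mathscr{T},\leq_2)}\frac{F_h^{a_{j,i}}}{[a_{j,i}]^!_{\up_{h+1}}}\cdot\prod_{(i,h,j)\in(\mathscr{T},\leq_1)}\frac{E_h^{a_{i,j}}}{[a_{i,j}]^!_{\up_h}}\in\mathfrak{A}'(m|n),
\end{equation*}
which by Lemma \ref{integral generators}(3)(4) factors as a product of single-entry almost-diagonal elements $(aE_{h,h+1})(\mathbf{0})$ and $(aE_{h+1,h})(\mathbf{0})$ (all powers of $E_m,F_m$ that appear are at most $1$, since $A\in M(m|n)^{\pm}$ has $0/1$ entries in the $m\times n$ and $n\times m$ blocks).

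At each level $\bsS(m|n,r)$, the orthogonality relations $N_{A'}N_\la=\delta_{\la,\ro(A)}N_{A'}$ from Lemma \ref{norm basis}(1) collapse the product defining $\mathfrak{m}(A)_r$ to a sum, over valid diagonal completions, of the BLM monomial $\Psi_A$ appearing in Theorem \ref{BLM triangular}. Since $\Psi_A=(-1)^{\bar A}\xib_A+(\text{Bruhat-lower terms})$ at each such completion, summing over completions and reassembling via the basis of Lemma \ref{Bruhat} yields, in $\bsS(m|n)$, a triangular identity
\begin{equation*}
\mathfrak{m}(A)=c_A\,A(\mathbf{0})+\sum_{B\in M(m|n)^{\pm},\,B\prec A}c_B\,B(\mathbf{0}),\qquad c_A\in\mathbb{Q}(\up)^{\times}.
\end{equation*}
By \eqref{prec order}, each such $B$ satisfies $\|B\|<\|A\|$, so $B(\mathbf{0})\in\mathfrak{A}'(m|n)$ by induction, giving $A(\mathbf{0})\in\mathfrak{A}'(m|n)$ as required.

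The main obstacle is the uniform passage from the level-wise Bruhat-order relation of Theorem \ref{BLM triangular} to the displayed $\preceq$-triangular identity in $\bsS(m|n)$: one must verify that every lower Bruhat term produced at level $r$ regroups coherently, across all $r\geq 0$, into a $\mathbb{Q}(\up)$-combination of terms $B(\mathbf{0})$ with $B\in M(m|n)^{\pm}$ and $B\prec A$, and one must track the signs contributed by the $h=m$ cases of Lemma \ref{lemma of BLM triangular relations}(1b)(2b) against those built into the definition \eqref{Ajr} of $A(\mathbf{0})$, so that the leading coefficient $c_A$ is indeed a unit in $\mathbb{Q}(\up)$.
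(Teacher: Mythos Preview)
Your proposal is correct and follows essentially the same route as the paper: one inclusion from Propositions \ref{multiply O} and \ref{h neq m}, the other by induction on $\|A\|$ via the monomial $\mathfrak{m}(A)$, orthogonality, and the triangular relation of Theorem \ref{BLM triangular}, together with \eqref{prec order}. Two small corrections: (i) the lower terms in the triangular identity are combinations of $B(\bsj)$ for various $\bsj\in\mathbb Z^{m+n}$, not just $B(\mathbf{0})$, since repeated application of Proposition \ref{h neq m} shifts $\bsj$ by $\alpha_h,\beta_h$; this is harmless for your induction since any such $B(\bsj)$ lies in $\mathfrak A'(m|n)$ once $B(\mathbf{0})$ does; (ii) the leading coefficient is in fact exactly $1$, because $\overline{E^{(A+\lambda)}_{i,h,j}}=\overline{F^{(A+\lambda)}_{i,h,j}}=0$ for almost-diagonal matrices, so the signs $(-1)^{\overline{A+\lambda}}$ from Theorem \ref{BLM triangular} match precisely those in the definition \eqref{Ajr} of $A(\mathbf{0},r)$.
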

\begin{proof} Let $\fA(m|n)'$ be the subalgebra of $\bsS(m|n)$ generated by $E_h,
F_h,\ugk_i^{\pm1}.$
 By Proposition \ref{h neq m}, we have $\fA(m|n)'\subseteq \fA(m|n)$. We now prove $\fA(m|n)\subseteq \fA(m|n)'$ by induction on $\|A\|$. Clealy,  if $\|A\|=0$, $A(\bsj)=O(\bsj)=\prod_{i=1}^{m+n}\ugk_i^{j_i}\in\fA(m|n)'$ for all $\bsj\in\mathbb Z^{m+n}$. Assume now $\|A\|>0$ and $B(\bsj)\in\fA(m|n)'$ for all $\bsj\in\mathbb Z^{m+n}$, whenever $\|B\|<\|A\|$. We need a triangular relation to complete the proof.

  By Lemma \ref{integral generators}, we have
$$(a_{i,j}E_{h,h+1})({\bf0})=\frac{E_{h,h+1}^{a_{i,j}}({\bf0})}{[a_{i,j}]_{\up_h}^!}\,\text{ and }\,
(a_{j,i}E_{h+1,h})({\bf0})=\frac{E_{h+1,h}^{a_{j,i}}({\bf0})}{[a_{j,i}]_{\up_{h+1}}^!}.
$$
Thus, repeatedly applying Proposition \ref{h neq m} yields
$$\prod^{(\leq_2)}_{(i,h,j)\in\scrT}(a_{j,i}E_{h+1,h})({\bf0})\prod^{(\leq_1)}_{(i,h,j)\in\scrT}(a_{i,j}E_{h,h+1})({\bf0})=\sum_{\substack{B\in
M(m|n)^\pm\\ \bsj\in\mathbb Z^{m+n}}}g_{A,B,\bsj}B(\bsj).$$
We now prove that $g_{A,A,{\bf0}}=1$ and $B\prec A$ whenever $g_{A,B,\bsj}\neq0$ and $B\neq A$. In other words, we show that the triangular relation established in Theorem \ref{BLM triangular} induces a triangular relation of the form
\begin{equation}\label{triangular2}
\aligned
\prod^{(\leq_2)}_{(i,h,j)\in\scrT}(a_{j,i}E_{h+1,h})(\mathbf{0})&\prod^{(\leq_1)}_{(i,h,j)\in\scrT}(a_{i,j}E_{h,h+1})(\mathbf{0})
=A(\mathbf{0})+\sum_{\substack{B\in
M(m|n)^\pm,\bsj\in\mathbb Z^{m+n}\\ B\prec
A}}g_{B,A,\bsj}B(\bsj),
\endaligned
\end{equation}
or equivalently, for all $r\geq0$,
$$\prod_{(i,h,j)\in\scrT}^{(\leq_2)}(a_{j,i}E_{h+1,h})(\mathbf{0},r)\prod^{(\leq_1)}_{(i,h,j)\in\scrT}(a_{i,j}E_{h,h+1})({\bf0},r)=A(\mathbf{0},r)+\sum_{\substack{B\in
M(m|n)^\pm\\\bsj\in\mathbb Z^{m+n}\\ B\prec
A}}g_{B,A,\bsj}B(\bsj,r).$$

Observe, for $\lambda,\mu\in\Lambda(m|n,r-|A|)$ and $\lambda\neq \mu$, the orthogonality relations
$$\xib_{E^{(A+\lambda)}_{i,h,j}}\xib_{E^{(A+\mu)}_{i,h,j}}=0,\quad
\xib_{F^{(A+\lambda)}_{i,h,j}}\xib_{F^{(A+\mu)}_{i,h,j}}=0,\;\text{ and }\;\xib_{F^{(A+\lambda)}_{N-1,N-1,N}}\xib_{E^{(A+\mu)}_{N-1,N-1,N}}=0,$$
where $N=m+n$.
Thus,
$$\aligned\prod^{(\leq_2)}_{(i,h,j)\in\scrT}(&a_{j,i}E_{h+1,h})({\bf0},r)\prod^{(\leq_1)}_{(i,h,j)\in\scrT}(a_{i,j}E_{h,h+1})({\bf0},r)\\
&=\sum_{\lambda\in\Lambda(m|n,r-|A|)}\prod^{(\leq_2)}_{(i,h,j)\in\scrT}(-1)^{\overline{F^{(A+\lambda)}_{i,h,j}}}\xib_{F^{(A+\lambda)}_{i,h,j}}\prod^{(\leq_1)}_{(i,h,j)\in\scrT}(-1)^{\overline{E^{(A+\lambda)}_{i,h,j}}}\xib_{E^{(A+\lambda)}_{i,h,j}}.
\endaligned$$

Since $\overline{E^{(A+\lambda)}_{i,h,j}}=0$ and
$\overline{F^{(A+\lambda)}_{i,h,j}}=0$, by Theorem \ref{BLM triangular},
$$\aligned&\prod^{(\leq_2)}_{(i,h,j)\in\scrT}
(a_{j,i}E_{h+1,h})({\bf0},r)\prod^{(\leq_1)}_{(i,h,j)\in\scrT}
(a_{i,j}E_{h,h+1})({\bf0},r)\\
&=\sum_{\lambda\in\Lambda(m|n,r-|A|)}\prod^{(\leq_2)}_{(i,h,j)\in\scrT}
\xib_{F^{(A+\lambda)}_{i,h,j}}\prod^{(\leq_1)}_{(i,h,j)\in\scrT}
\xib_{E^{(A+\lambda)}_{i,h,j}}\\
&=\sum_{\lambda\in\Lambda(m|n,r-|A|)}((-1)^{\overline{A+\lambda}}\xib_{A+\lambda}+\mbox{lower
terms})\\
&=A(\mathbf{0},r)+ \sum_{\substack{B\in M(m|n)^\pm,B\prec A\\\bsj\in\mathbb Z^{m+n}}}g_{B,A,\bsj}B(\bsj,r),\text{ by \eqref{prec}},
\endaligned$$
where ``lower terms" is a linear combination of $\xib_{B}$ with
$B\in M(m|n)$ and $B<A$ under Chevalley-Bruhat ordering in $M(m|n)$.
Hence, \eqref{triangular2} follows.

To complete the proof, by \eqref{prec order} and induction,
\eqref{triangular2} implies that $A({\bf0})\in\fA(m|n)'$. Finally,
by Proposition \ref{multiply O}, for any
$\bsj=(j_1,j_2,\cdots,j_{m+n})\in\mathbb{Z}^{m+n}$,
$A(\bsj)=\up^{-\co(A)\cdot\bsj} A(\mathbf{0})O(\bsj)\in \fA(m|n)'$.
\end{proof}

For any $A=(a_{i,j})\in M(m|n)$ and $\bsj\in\mathbb{Z}^{m+n}$, let
$${\rm M}_{A,\bsj}:=\prod^{(\leq_2)}_{(i,h,j)\in\scrT}(a_{j,i}E_{h+1,h})(\mathbf{0})\cdot O(\bsj)\cdot\prod^{(\leq_1)}_{(i,h,j)\in\scrT}(a_{i,j}E_{h,h+1})(\mathbf{0}).$$
Now, Lemma \ref{Bruhat} and \eqref{triangular2} implies immediately the first part of the following.

\begin{corollary} (1) The set
$\overline{\mathfrak M}=\{{\rm M}_{A,\bsj}\mid A\in M(m|n)^\pm, \bsj\in\mathbb Z^{m+n}\}$
forms a basis of homogeneous elements for $\fA(m|n)$.

(2) The image of the algebra homomorphism $\eta$ established in Theorem \ref{quantum serre relations} is the subalgebra $\fA(m|n)$.
\end{corollary}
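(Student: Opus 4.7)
The proof is essentially a bookkeeping exercise, since the heavy lifting has already been done in Theorems~\ref{quantum serre relations} and \ref{BLM triangular relations in V}. For part~(1), my plan is to leverage the triangular relation \eqref{triangular2} established in the proof of Theorem~\ref{BLM triangular relations in V}:
\[
\prod_{(i,h,j)\in\scrT}^{(\leq_2)}(a_{j,i}E_{h+1,h})(\mathbf{0})\cdot\prod_{(i,h,j)\in\scrT}^{(\leq_1)}(a_{i,j}E_{h,h+1})(\mathbf{0}) \;=\; A(\mathbf{0})+\!\!\sum_{B\prec A,\,\bsj'}\!\!g_{B,A,\bsj'}\,B(\bsj').
\]
Since $\mathrm{M}_{A,\bsj}$ inserts $O(\bsj)$ between the two products, I will first commute $O(\bsj)$ past the right-hand $E$-product using relation (QS2) (equivalently, Proposition~\ref{multiply O} applied generator-by-generator); each swap contributes only a nonzero power of $\up$, so $\mathrm{M}_{A,\bsj}=\up^{c}(\mathrm{LP}\cdot\mathrm{RP})\cdot O(\bsj)$ for some integer $c=c(A,\bsj)$. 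Applying Proposition~\ref{multiply O}(2) termwise to the displayed expansion then yields
\[
\mathrm{M}_{A,\bsj} \;=\; \up^{c+\co(A)\centerdot\bsj}A(\bsj)+\!\!\sum_{B\prec A,\,\bsj''}\!\!g'_{B,\bsj''}\,B(\bsj''),
\]
with invertible leading coefficient. Combined with Lemma~\ref{Bruhat}, this says the transition matrix from $\overline{\mathfrak M}$ to the basis $\{A(\bsj)\}$ is $\prec$-triangular with invertible diagonal entries, so $\overline{\mathfrak M}$ is itself a basis of $\fA(m|n)$ consisting of homogeneous elements (since each $\mathrm{M}_{A,\bsj}$ inherits grading degree $\hat A$ from its factors).

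For part~(2), by Theorem~\ref{quantum serre relations} the homomorphism $\eta$ sends the standard generators $\sfE_{h,h+1}$, $\sfE_{h+1,h}$, $\sfK_a^{\pm1}$ of $\bfU(\mathfrak{gl}_{m|n})$ to $E_h$, $F_h$, $\ugk_a^{\pm1}$ respectively. Hence $\mathrm{im}(\eta)$ is the $\mathbb Q(\up)$-subalgebra of $\bsS(m|n)$ generated by $\{E_h,F_h,\ugk_a^{\pm1}\mid 1\le h<m+n,\,1\le a\le m+n\}$. By Theorem~\ref{BLM triangular relations in V} this subalgebra is exactly $\fA(m|n)$, so $\mathrm{im}(\eta)=\fA(m|n)$.

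The only point requiring care is the termwise accounting of $\up$-powers when $O(\bsj)$ is pulled through the right-hand $E$-product and subsequently through each $B(\bsj')$ in the lower-order expansion; since these operations only translate the index $\bsj'$ and rescale by a nonzero power of $\up$, they preserve the $\prec$-structure on the lower terms and do not disturb triangularity. Beyond this straightforward bookkeeping there is no substantive obstacle — the statement is really a direct corollary packaging the monomial basis $\overline{\mathfrak M}$ (needed in the next section for injectivity of $\eta$) together with the surjectivity onto $\fA(m|n)$.
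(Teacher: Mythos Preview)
Your proposal is correct and follows exactly the route the paper intends: the paper simply asserts that Lemma~\ref{Bruhat} together with the triangular relation~\eqref{triangular2} ``implies immediately'' part~(1), and your argument --- commuting $O(\bsj)$ past the $E$-product via Proposition~\ref{multiply O} (equivalently (QS2)), then applying~\eqref{triangular2} and Proposition~\ref{multiply O}(2) termwise to obtain a $\prec$-triangular transition to the basis $\{A(\bsj)\}$ --- is precisely the natural way to unpack that sentence. Your proof of part~(2) is likewise the obvious (and correct) observation that $\mathrm{im}(\eta)$ is the subalgebra generated by $E_h,F_h,\ugk_a^{\pm1}$, which Theorem~\ref{BLM triangular relations in V} identifies with $\fA(m|n)$.
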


We now use this basis to derive a basis, {\it a monomial basis}, for the quantum supergroup $\bfU=\bfU(\mathfrak{gl}_{m|n})$.

For any $A\in M(m|n)^\pm, \bsj\in\mathbb Z^{m+n}$, define
\begin{equation}\label{monomial}
\sfM^{A,\bsj}=\sfM'_1\sfM'_2\cdots\sfM'_{m+n-1}\sfK_1^{j_1}\cdots\sfK_{m+n}^{j_{m+n}}\sfM_{m+n}\cdots\sfM_3\sfM_2,
\end{equation}
where, for $2\leq j\leq {m+n}$ and $1\leq k\leq {m+n}-1$,
$$\aligned
\sfM_j&=\sfE_{j-1,j}^{(a_{j-1,j})}(\sfE_{j-2,j-1}^{(a_{j-1,j})}\sfE_{j-1,j}^{(a_{j-2,j})})\cdots (\sfE_{1,2}^{(a_{1,j})}\sfE_{2,3}^{(a_{1,j})}\cdots \sfE_{j-1,j}^{(a_{1,j})}),\\
\sfM'_k&=\sfE_{k+1,k}^{(a_{k+1,k})}(\sfE_{k+2,k+1}^{(a_{k+2,k})}\sfE_{k+1,k}^{(a_{k+2,k})})
\cdots(\sfE_{{m+n},{m+n}-1}^{(a_{{m+n},k})}\cdots \sfE_{k+2,k+1}^{(a_{{m+n},k})}\sfE_{k+1,k}^{(a_{{m+n},k})}),
\endaligned
$$
following the orders given in \eqref{T_j} and \eqref{T'_k}.
\begin{corollary}  The set
${\mathfrak M}=\{\sfM^{A,\bsj}\mid A\in M(m|n)^\pm, \bsj\in\mathbb Z^{m+n}\}$
forms a basis of homogeneous elements for $\bfU(\mathfrak{gl}_{m|n})$.
\end{corollary}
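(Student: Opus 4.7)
The plan is to transport the basis $\overline{\mathfrak M}=\{{\rm M}_{A,\bsj}\}$ of $\fA(m|n)$ given in the preceding corollary back across $\eta$ to a basis of $\bfU(\mathfrak{gl}_{m|n})$. The bridging identification is $\eta(\sfM^{A,\bsj})={\rm M}_{A,\bsj}$ for all $A\in M(m|n)^\pm$ and $\bsj\in\mathbb{Z}^{m+n}$. I would verify this factor by factor: by Proposition~\ref{multiply O}, $\eta(\sfK_1^{j_1}\cdots\sfK_{m+n}^{j_{m+n}})=O(\bsj)$. For each even divided power $\sfE_{h,h+1}^{(k)}:=\sfE_{h,h+1}^{k}/[k]_{\up_h}^!$ (and its $F$-counterpart), Lemma~\ref{integral generators}(3) rearranges to $\eta(\sfE_{h,h+1}^{(k)})=(kE_{h,h+1})(\mathbf{0})$ when $h\ne m$; for the odd case $h=m$, the relevant exponents $a_{i,j}$ lie in the off-diagonal blocks $A_{12}$ or $A_{21}$, hence are in $\{0,1\}$ by the very definition of $M(m|n)^\pm$, and the divided power reduces to the generator itself or to $1$. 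Finally the product orderings encoded in \eqref{monomial} match $\leq_1$ and $\leq_2$ from \eqref{T_j} and \eqref{T'_k} that define ${\rm M}_{A,\bsj}$. In particular $\sfM^{A,\bsj}$ is $\mathbb Z_2$-homogeneous of degree $|A_{12}|+|A_{21}|\equiv\hat A$, since the odd generators $\sfE_{m,m+1}$ and $\sfE_{m+1,m}$ appear exactly $|A_{12}|$ and $|A_{21}|$ times respectively.

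Since $\overline{\mathfrak M}$ is linearly independent in $\fA(m|n)$, the identification above forces $\mathfrak M$ to be linearly independent in $\bfU$. It therefore remains to show that $\mathfrak M$ spans $\bfU$. Here I would run the standard PBW-style straightening: (QS1)--(QS2) allow collecting all Cartan factors into a central block; (QS3) pushes every $\sfE_{h+1,h}$ to the left of every $\sfE_{k,k+1}$ modulo Cartan corrections; and (QS4)--(QS6), combined with the recursion \eqref{qbrackets} defining the higher root vectors $\sfE_{a,b}$, impose the canonical orderings $\leq_1$ and $\leq_2$ on the positive and negative parts separately. The nilpotency $\sfE_{m,m+1}^2=\sfE_{m+1,m}^2=0$ from (QS6) automatically constrains the odd divided powers to exponents in $\{0,1\}$, exactly mirroring the defining condition on $M(m|n)^\pm$. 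Combining spanning with the linear independence established above simultaneously yields the basis property of $\mathfrak M$ and, as a byproduct, the injectivity of $\eta$, so that $\eta\colon\bfU\xrightarrow{\sim}\fA(m|n)$.

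The main obstacle will be the PBW spanning in the second step. Because the simple reflection at position $m$ is odd, one cannot organize the higher quantum root vectors via a Lusztig braid group action as in the non-super case \cite{BLM,DDPW}; instead, one must directly derive Levendorskii--Soibelman-type commutation formulas among all $\sfE_{a,b}$ from (QS4)--(QS6), \eqref{qbrackets} and the mixed commutator $[\sfE_{m,m+1},\sfE_{m-1,m+2}]=0$, with delicate bookkeeping near the odd node. Alternatively, one may shortcut this step by invoking the PBW theorem for $\bfU(\mathfrak{gl}_{m|n})$ from \cite{Z}: this supplies an \emph{a priori} basis of $\bfU$ of the same cardinality as $\mathfrak M$ in each homogeneous weight space, and together with the linear independence of $\mathfrak M$ already established, a dimension comparison closes the argument.
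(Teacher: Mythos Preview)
Your proposal is correct, and your second alternative for the spanning step---invoking a known PBW basis to make a dimension comparison---is essentially the paper's own argument. The paper organises this comparison slightly differently: it restricts to $\bfU^+$ and uses the $Q^+$-grading $\bfU^+=\bigoplus_{\alpha}\bfU_\alpha^+$ to reduce to finite-dimensional weight spaces, checks that $\dim\bfU_\alpha^+$ (read off from a PBW-type basis in the root vectors $\sfE_{a,b}$) equals the number of $A\in M(m|n)^+$ with $\sum a_{i,j}\alpha_{i,j}=\alpha$, deduces that $\mathfrak M^+=\{\sfM^{A,\mathbf 0}:A\in M(m|n)^+\}$ is a basis of $\bfU^+$, and then concludes via the triangular decomposition $\bfU\cong\bfU^-\otimes\bfU^0\otimes\bfU^+$. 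Your first alternative (direct Levendorskii--Soibelman straightening through (QS4)--(QS6)) would also work in principle but, as you anticipate, is considerably more laborious near the odd node; the paper avoids it entirely.
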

\begin{proof} Let $Q^+=\sum_{i=1}^{m+n}\mathbb N\alpha_{i}$ be the $+$-part of the root lattice, where $\alpha_i=\alpha_{i,i+1}$ are simple roots and let $\bfU^\pm$ be the $\pm$-part of $\bfU$. Then $\bfU^+=\oplus_{\alpha\in Q^+}\bfU_\alpha^+$ is $Q^+$-graded. By inspecting a PBW type basis,\footnote{For example, replacing $\sfM_j$ (resp., $\sfM'_k$) by $\sfE_{j-1,j}^{a_{j-1,j}}\sfE_{j-2,j}^{a_{j-2,j}}\cdots \sfE_{1,j}^{a_{1,j}}$ (reps., $\sfE_{k+1,k}^{a_{k+1,k}}\sfE_{k+2,k}^{a_{k+2,k}}\cdots\sfE_{N,k}^{a_{N,k}}$) yields such a basis.} we see that
$$\dim\bfU_\al^+=\#\{A\in M(m|n)^+\mid \alpha=\textstyle\sum_{i<j}a_{i,j}\alpha_{i,j}\},$$
where, for $i<j$, $\alpha_{i,j}=\alpha_i+\cdots+\alpha_{j-1}$.
Since $\eta(\mathfrak M)=\overline{\mathfrak M}$ is linearly independent, $\mathfrak M$ is linearly independent. In particular,
 $\mathfrak M^+:=\{\sfM^{A,\bf0}\mid A\in M(m|n)^+\}$ is linearly independent. A dimensional comparison shows that $\mathfrak M^+$ forms a basis for $\bfU^+$. Hence, $\mathfrak M$ forms a basis for $\bfU$ since $\bfU\cong\bfU^-\otimes\bfU^0\otimes\bfU^+$.
\end{proof}
 Now, $\eta$ sends $\mathfrak M$ to a basis. Hence $\eta$ is injective and so we have established the following realisation.
\begin{theorem} The quantum enveloping superalgebra $\bfU(\mathfrak{gl}_{m|n})$ is isomorphic to
the superalgebra $\fA(m|n)$. In particular, $\bfU(\mathfrak{gl}_{m|n})$ can be regarded as the $\mathbb Q(\up)$-superalgebra whose underlying superspace is spanned by
$$\{A(\bsj)\mid A\in M(m|n)^\pm, \bsj\in\mathbb Z^{m+n}\}$$ and whose multiplication is given by the formulas in Propositions \ref{multiply O} and \ref{h neq m}.
\end{theorem}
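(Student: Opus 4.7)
The plan is to assemble the pieces already in place and verify that the homomorphism $\eta$ of Theorem~\ref{quantum serre relations} is a bijection onto $\fA(m|n)$. Once this is shown, the transport-of-structure statement about the basis $\{A(\bsj)\}$ and the multiplication formulas follows from Propositions~\ref{multiply O} and \ref{h neq m} together with Lemma~\ref{Bruhat}.

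First, I would recall that the image of $\eta$ coincides with $\fA(m|n)$. This is precisely the content of Theorem~\ref{BLM triangular relations in V} together with the corollary that follows it: since $\fA(m|n)$ is generated (as a subalgebra of $\bsS(m|n)$) by $E_h, F_h, \ugk_i^{\pm 1}$, and since these are the images of $\sfE_{h,h+1}, \sfE_{h+1,h}, \sfK_i^{\pm 1}$ under $\eta$, we conclude that $\eta(\bfU(\mathfrak{gl}_{m|n})) = \fA(m|n)$. So surjectivity onto $\fA(m|n)$ is immediate.

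Next, I would establish injectivity by comparing bases. The monomial set $\mathfrak M = \{\sfM^{A,\bsj}\mid A\in M(m|n)^\pm, \bsj\in\mathbb Z^{m+n}\}$ defined in \eqref{monomial} is a basis of $\bfU(\mathfrak{gl}_{m|n})$, as proved in the corollary preceding the theorem statement. The point is that $\eta$ sends $\mathfrak M$ bijectively onto the set $\overline{\mathfrak M} = \{{\rm M}_{A,\bsj}\}$, because the definition of $\sfM^{A,\bsj}$ matches, factor by factor, the definition of ${\rm M}_{A,\bsj}$ under the rule $\eta(\sfE_{h,h+1}^{(k)}) = (kE_{h,h+1})(\mathbf 0)$ and $\eta(\sfE_{h+1,h}^{(k)}) = (kE_{h+1,h})(\mathbf 0)$, which is Lemma~\ref{integral generators}(3)--(4), combined with $\eta(\sfK_i^{j_i}) = O(j_i\bse_i)$ and the multiplicativity of $O(\bsj)$ in $\bsj$ (Proposition~\ref{multiply O}). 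Since $\overline{\mathfrak M}$ is a basis of $\fA(m|n)$ by the previous corollary and $\eta$ carries a basis to a basis, $\eta$ is injective.

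Combining surjectivity onto $\fA(m|n)$ with injectivity yields the algebra isomorphism $\eta: \bfU(\mathfrak{gl}_{m|n}) \xrightarrow{\sim} \fA(m|n)$; this isomorphism preserves the $\mathbb Z_2$-grading because all generating images are homogeneous of the correct parity. Finally, for the ``in particular'' clause: transporting structure along $\eta^{-1}$, the superspace $\bfU(\mathfrak{gl}_{m|n})$ acquires the basis $\{A(\bsj)\mid A\in M(m|n)^\pm,\bsj\in\mathbb Z^{m+n}\}$ (Lemma~\ref{Bruhat}), and the multiplication rules of Propositions~\ref{multiply O} and \ref{h neq m} express products of generators $O(\bsj), E_{h,h+1}(\mathbf 0), E_{h+1,h}(\mathbf 0)$ with an arbitrary basis element $A(\bsj')$, which suffices to determine multiplication throughout $\fA(m|n)$. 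The only subtle point — and it is the step most deserving of care rather than difficulty — is checking that the factorisation conventions used in defining $\sfM^{A,\bsj}$ and ${\rm M}_{A,\bsj}$ (the order on $\scrT$ via $\leq_1$ and $\leq_2$, and the super-commutator brackets in \eqref{qbrackets}) are respected by $\eta$; this reduces to the defining relations (QS1)--(QS6) verified in Theorem~\ref{quantum serre relations}, so no new work is needed.
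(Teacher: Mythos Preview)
Your proposal is correct and follows essentially the same approach as the paper: the paper's own argument is the single sentence ``Now, $\eta$ sends $\mathfrak M$ to a basis. Hence $\eta$ is injective'' immediately preceding the theorem, and your write-up is a faithful expansion of exactly this reasoning, citing the two corollaries (that $\overline{\mathfrak M}$ is a basis of $\fA(m|n)$ and that $\mathfrak M$ is a basis of $\bfU$) in the right order.
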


\end{document}